\newtheorem{thm}{Theorem}
\theoremstyle{plain}
\newtheorem{cor}[thm]{Corollary}
\newtheorem{lem}[thm]{Lemma}
\newtheorem{prop}[thm]{Proposition}
\theoremstyle{remark}
\newtheorem{rem}[thm]{Remark}
\newtheorem*{acknowledgements}{Acknowledgements}
\numberwithin{equation}{section}
\numberwithin{thm}{section}
\renewcommand{\phi}{\varphi}
\newcommand{\BN}{{\mathbb{N}}}
\newcommand{\BR}{{\mathbb{R}}}
\newcommand{\BZ}{{\mathbb{Z}}}
\newcommand{\Ff}{{\mathfrak{f}}}
\newcommand{\FB}{{\mathfrak{B}}}
\newcommand{\FF}{{\mathfrak{F}}}
\newcommand{\FS}{{\mathfrak{S}}}
\newcommand{\CC}{{\mathcal C}}
\newcommand{\CI}{{\mathcal I}}
\newcommand{\CJ}{{\mathcal J}}
\newcommand{\CM}{{\mathcal M}}
\newcommand{\CN}{{\mathcal N}}
\newcommand{\CS}{{\mathcal S}}
\newcommand{\CW}{{\mathcal W}}
\newcommand{\CZ}{{\mathcal Z}}
\newcommand{\vect}[1]{{\boldsymbol{#1}}}
\newcommand{\nin}{\notin}
\renewcommand{\mod}{\mathop{\rm mod}\nolimits}
\renewcommand{\binom}[2]{\left({#1}\atop{#2}\right)}
\newcommand{\quotient}[2]{
        \mathchoice
            {
                \text{\raise1ex\hbox{$#1$}\Big/\lower1ex\hbox{$#2$}}%
            }
            {
                #1\,/\,#2
            }
            {
                #1\,/\,#2
            }
            {
                #1\,/\,#2
            }
    }
\newcommand{\rquotient}[2]{
        \mathchoice
            {
                \text{\lower1ex\hbox{$#1$}\Big \backslash \raise01ex\hbox{$#2$}}%
            }
            {
                #1\,\backslash\,#2
            }
            {
                #1\,\backslash\,#2
            }
            {
                #1\,\backslash\,#2
            }
    }
\newcommand{\lrquotient}[3]{
        \mathchoice
            {
                \text{\lower1ex\hbox{$#1$}\Big \backslash \raise01ex\hbox{$#2$}\Big/\lower1ex\hbox{$#3$}}%
            }
            {
                #1\,\backslash\,#2\,/\,#3
            }
            {
                #1\,\backslash\,#2\,/\,#3
            }
            {
                #1\,\backslash\,#2\,/\,#3
            }
    }
\begin{document}
\selectlanguage{english}

\bibliographystyle{plain}

\title{Effective Vinogradov's Mean Value Theorem via Efficient Boxing}
\author{Raphael S. Steiner}
\address{Department of Mathematics, University of Bristol, Bristol BS8 1TW, UK}%
\email{raphael.steiner@bristol.ac.uk}%




\begin{abstract} We combine Wooley's efficient congruencing method with earlier work of Vinogradov and Hua to get effective bounds on Vinogradov's mean value theorem.
\end{abstract}

\subjclass[2010]{11P55 (11L07, 11L15, 11D45)}
\keywords{Exponential sums, Hardy--Littlewood method, Effective Vinogradov's mean value theorem}

\maketitle
\tableofcontents

\section{Introduction}

In this paper we are concerned with Vinogradov's mean value theorem. Let $k$ and $s$ denote two natural numbers. The goal is to understand integer solutions of the Diophantine equation
\begin{equation}
\sum_{i=1}^s x_i^j = \sum_{i=1}^s y_i^j, \quad (j=1,\dots,k),
\label{eq:vinomean}
\end{equation}
with $0 < \vect{x},\vect{y} \le X$. By orthogonality the number of solutions is equal to
$$
J_{s,k}(X)=\int_{[0,1[^k}|f(X/2,X,\vect{\alpha})|^{2s}d\vect{\alpha},
$$
where we define
$$
f(N,M,\vect{\alpha})=\sum_{N-\frac{1}{2}M < x \le N+\frac{1}{2}M} e(\vect{\alpha} \cdot \vect{\vartheta}(x))
$$
for real $N,M$ with $M\ge 1$ and $\vect{\vartheta}(x)=(x,x^2,\dots,x^k)$. Lower bounds for $J_{s,k}(X)$ are well-known and easily proved (see for example \cite{Vaughanhardylittlewood}). They admit the form
\begin{equation}
J_{s,k}(X) \gg_{s,k} \max\{ X^s, X^{2s-\frac{1}{2}k(k+1)}\}.
\label{eq:lowerbound}
\end{equation}
In a recent breakthrough Bourgain, Demeter and Guth \cite{BourgainVMVT} have shown that \eqref{eq:lowerbound} is sharp up to a factor $X^{\epsilon}$; i.e. they have proven the inequality
\begin{equation}
J_{s,k}(X) \ll_{s,k,\epsilon} \max\{ X^{s+\epsilon}, X^{2s-\frac{1}{2}k(k+1)+\epsilon} \}
\label{eq:conj}
\end{equation}
to hold for all $s,k \in \BN$ and $\epsilon>0$. An application of the circle method then further shows that one has an asymptotic of the shape
\begin{equation}
J_{s,k}(X) \sim C_{s,k} X^{2s-\frac{1}{2}k(k+1)}
\label{eq:asym}
\end{equation}
for all $s>\frac{1}{2}k(k+1)$. Before this latest breakthrough there has been a long history of improvements towards \eqref{eq:conj} and \eqref{eq:asym}. Following Vinogradov \cite{vinogradov1935new}, who gave an estimate of the shape
\begin{equation*}
J_{s,k}(X) \ll_{s,k} X^{2s-\frac{1}{2}k(k+1)+\eta_{s,k}},
\end{equation*}
there have been improvements in the argument by Linnik \cite{LinnikVino}, Karatsuba \cite{karatsuba1973mean} and Stechkin \cite{stechkin1975mean} leading to an error in the exponent of only $\eta_{s,k}=\frac{1}{2}k^2(1-1/k)^{\lfloor s/k \rfloor}$. This allows one to get the asymptotic \eqref{eq:asym} as soon as $s \ge 3k^2(\log k+O(\log \log k))$. By extending Linnik's argument Wooley \cite{Wooley92} was further able to decrease the exponent to roughly $\eta_{s,k}=k^2e^{-2s/k^2}$ using his efficient differencing method, which allowed him to show that the asymptotic \eqref{eq:asym} holds for $s \ge k^2(\log k +O(\log \log k))$. Later Wooley \cite{WooleyAnnals} developed a powerful new argument, called efficient congruencing, which enabled him to prove \eqref{eq:conj} for $s \ge k(k+1)$. Note that this is just a factor of $2$ off the critical case $s=\frac{1}{2}k(k+1)$, from which all other cases would follow. There have followed a series of papers in which Wooley has refined his method, leading to proofs of \eqref{eq:conj} for $s\le \frac{1}{2}k(k+1)-\frac{1}{3}k+O(k^{\frac{2}{3}})$ \cite{Wooleybelow} and a full proof when $k=3$ \cite{Wooleycubic}. The history of the main conjecture \eqref{eq:conj} ends with Bourgain, Demeter and Guth's full proof using decoupling theory from harmonic analysis.\\

Vinogradov's mean value theorem has a broad range of applications. For example it can be used to get strong bounds on exponential sums (see Chapter 8.5 in \cite{ANT}). These strong bounds can then be used to get zero-free regions of the Riemann-Zeta function, something which has been made explicit by Ford \cite{FordAnnals}. Furthermore they have been used by Hal\'asz and Tur\'an \cite{HaTu69} to get zero-density estimates for the Riemann-Zeta function. Other applications of Vinogradov's mean value theorem include estimates for short mixed character sums, such as found in work of Heath-Brown and Pierce \cite{Heath-Brown01062015} and Kerr \cite{Kerr14}, as well as contributions to restriction theory worked out by Wooley \cite{WooleyRestriction} and Bourgain, Demeter and Guth \cite{BourgainVMVT}. In all of these applications it is desirable to have an effective version of Vinogradov's mean value theorem.

Effective versions have been given by Hua \cite{Huamean}, whose argument is based on Vinogradov's original method, Stechkin \cite{stechkin1975mean} as well as by Arkhipov, Chubarikov and Karatsuba \cite{ACK04}, whose work is based on Linnik's $p$-adic argument, and Ford \cite{FordAnnals}, whose argument is based on Wooley's efficient differencing method. In this paper we prove effective bounds using Wooley's efficient congruencing method combined with the older arguments of Vinogradov and Hua.

Let us give an overview of the heart of Vinogradov's and Hua's argument. If we have two tuples $\vect{x},\vect{x'}$ such that $\|\vect{x}-\vect{x'}\|_{\infty}\le S$, then we have
\begin{equation}
\left|\sum_{i=1}^k (x_i^j-x_i'^j) \right| \le j k \cdot S X^{j-1}.
\label{eq:small}
\end{equation}
The question of whether this can be reversed arises naturally and the answer is in the affirmative, although it depends on how well-spaced $\vect{x}$ is; i.e. how large $\min_{i\neq j} |x_i-x_j|$ is (see Lemma \ref{lem:smallint}). In his paper \cite{Huamean} Hua uses this reversability by writing \eqref{eq:vinomean} as
\begin{equation}
\sum_{i=1}^k (x_i^j-x_i'^j) =\sum_{i=1}^{s-k} (y_i^j-y_i'^j).
\label{eq:hua}
\end{equation}
By splitting up the $\vect{y}$ and $\vect{y'}$ into $X^{\frac{2(s-k)}{k}}$ intervals of length at most $X^{1-\frac{1}{k}}$ and using the integer translation invariance in combination with H\"older's inequality one can force $\|\vect{y}\|_{\infty},\|\vect{y'}\|_{\infty} \le X^{1-\frac{1}{k}}$ in \eqref{eq:hua}. Now the right hand side of \eqref{eq:hua} is small. It is in fact at most $(s-k) X^{j-1} X^{1-\frac{j}{k}}$. Splitting up the right hand side further into $(s-k)X^{1-\frac{j}{k}}$ intervals of size $X^{j-1}$ and using Cauchy-Schwarz one is able to reduce to \eqref{eq:small} with $S=1$. Fixing the $\vect{x'}$ arbitrarily allows now only $O_{s,k}(1)$ choices for the $\vect{x}$ as $x_i=x_i'+O_{s,k}(1)$ and the choices for $\vect{y}$ and $\vect{y'}$ can be bounded by $J_{s-k,k}(X^{1-\frac{1}{k}})$. This gives
$$
J_{s,k}(X) \ll_{s,k} \log(2X)^2 \cdot  X^{\frac{2(s-k)}{k}} \cdot \prod_{j=1}^k X^{1-\frac{j}{k}} \cdot X^k \cdot J_{s-k,k}(X^{1-\frac{1}{k}}),
$$
where the $\log(2X)^2$ is coming from a dyadic argument ensuring that $\min_{i\neq j} |x_i-x_j|$ is not too small. Iterating this inequality $l$-times, Hua proved the following upper bound for $s \ge \frac{1}{4}k(k+1)+lk$:
$$
J_{s,k}(X) \le (7s)^{4sl} \log(X)^{2l} X^{2s-\frac{1}{2}k(k+1)+\frac{1}{2}k(k+1)(1-\frac{1}{k})^l} \quad \forall X \ge 2.
$$

The same kind of argument also works if we only force $\|\vect{y}\|_{\infty},\|\vect{y'}\|_{\infty} \le X^{1-\theta}$, with $\theta$ very small. In this case one concludes $x_i=x_i'+O(X^{1-k\theta})$ and one can put the $\vect{x}$'s  into a box of size $X^{1-k\theta}$. Now Wooley's efficient machinery  lets us interchange the roles of $\vect{x}$ and $\vect{y}$ and thus allows us to play the same game again with $k\theta$ instead of $\theta$. In every iteration there is a slight gain in the exponent, depending on $s,k,\theta$ and $\eta_{s,k}$. In the simplest form of Wooley's efficient machinery these gains stack up to overcome the defect of the method as soon as $s \ge k(k+1)$ leading to a slight decrease of $\eta_{s,k}$ as seen in the following theorem.
\begin{thm}
Let $s,k \in \BN$ with $k \ge 3$ and $2 \log(k) \ge \lambda=\frac{s-k}{k^2}\ge 1$. Assume that
$$
J_{s,k}(X) \le C \log_2(2X)^{\delta} X^{2s-\frac{1}{2}k(k+1)+\eta} \quad \forall X \ge 1,
$$
for some $0 \le \delta$ and $0 < \eta \le \frac{1}{2}k(k+1)$. Further let
$$
D \ge \max \left\{1, \frac{\log\left(\frac{k^2}{2\eta} \frac{\lambda-1}{\lambda^2}+1 \right)}{\log(\lambda)} \right\}
$$
be an integer and set $\theta=k^{-(D+1)}$. Then we have
$$
\begin{aligned}
J_{s,k}(X) \le & C \cdot 2^{\frac{3}{2}k^2+\frac{11}{2}k+1}k^{\frac{1}{2}k^2+\frac{25}{6}k-2}  \cdot \CM_0\\
& \cdot \log_2(2X)^{\delta+ \frac{2 \lambda k -1 }{\lambda k -1}} X^{2s-\frac{1}{2}k(k+1)+\eta} \cdot X^{-\eta \theta \frac{s-2k}{s-k}}, \qquad \qquad \forall X \ge 1,
\end{aligned}
$$
where $\CM_0$ is defined as follows
\begin{equation}\begin{aligned}
\CM_0 &=  \max_{\gamma \in \{1,\frac{s-k}{s-2k}\}} \Biggl\{  \left( 2^{\frac{1}{2}k^2+\frac{31}{6}k+7}e^{\frac{3}{4}k^2-\frac{1}{2}k}k^{-\frac{1}{2}k^2+\frac{25}{3}k} \right)^{\gamma}, 2^{-\frac{1}{2}k^2-\frac{11}{6}k} \Biggr \} \\
&= \begin{cases} \left( 2^{\frac{1}{2}k^2+\frac{31}{6}k+7}e^{\frac{3}{4}k^2-\frac{1}{2}k}k^{-\frac{1}{2}k^2+\frac{25}{3}k} \right)^{\frac{s-k}{s-2k}}, & k\le 43, \\ 2^{\frac{1}{2}k^2+\frac{31}{6}k+7}e^{\frac{3}{4}k^2-\frac{1}{2}k}k^{-\frac{1}{2}k^2+\frac{25}{3}k}, & 44\le k \le 62, \\
2^{-\frac{1}{2}k^2-\frac{11}{6}k}, & k \ge 63. \end{cases}
\label{eq:M0}
\end{aligned}\end{equation}
\label{thm:final}
\end{thm}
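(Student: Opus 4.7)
The plan is to run an efficient-congruencing iteration built on top of a Vinogradov--Hua step, using the hypothesised bound for $J_{s,k}$ as the input to bootstrap a strictly better bound. The structure is: (i) prove a ``one-step'' recursion that, under a hypothetical bound of the form $J_{s,k}(Y) \le C \log_2(2Y)^{\delta'} Y^{2s - \frac{1}{2}k(k+1) + \eta'}$, yields the same shape with a strictly smaller exponent at scale $Y^{1-k\theta}$ (say), with a concrete multiplicative loss; (ii) iterate this recursion $D$ times; (iii) solve the resulting descent to arrive at the exponent saving $-\eta \theta \frac{s-2k}{s-k}$ at the original scale $X$.

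For step (i) I would follow the Vinogradov--Hua decomposition sketched in the introduction: write the $2s$-tuple in \eqref{eq:vinomean} as $(\vect{x},\vect{x}',\vect{y},\vect{y}')$ with $k$ variables each in $\vect{x},\vect{x}'$ and $s-k$ each in $\vect{y},\vect{y}'$; box the $\vect{y},\vect{y}'$ into intervals of length $X^{1-\theta}$ (using integer translation invariance and H\"older), at which point the right-hand side of \eqref{eq:hua} has size bounded by a fixed polynomial in $X$ with $j$-th component $\ll X^{j-\theta}$; further dyadic partition plus Cauchy--Schwarz reduces to the ``well-spaced'' reversibility lemma \ref{lem:smallint}, which then confines each $\vect{x}$ into an interval of length $\ll X^{1-k\theta}$ around $\vect{x}'$. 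The dyadic/H\"older arrangement is what injects the two $\log_2(2X)$ factors whose exponents sum to $\frac{2\lambda k -1}{\lambda k -1}$ in the final bound.

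Crucially, after this step one has reduced to a $J_{s-k,k}(X^{1-\theta})$ times a $J_{k,k}$-type factor on a box of size $X^{1-k\theta}$. This is where Wooley's efficient congruencing enters: by repeating the argument one iteration later we may swap the roles of the two halves, so the parameter $\theta$ is amplified to $k\theta$. Iterating $D$ times turns the tiny initial $\theta = k^{-(D+1)}$ into an honest $k^{-1}$-scale saving while the intermediate applications only cost polynomial factors in $k$ and a bounded number of $\log_2(2X)$ powers. The lower bound on $D$ in terms of $\lambda$ and $\eta$ is precisely what makes the geometric product telescope --- at each iteration the ratio of new exponent-surplus to old is at most $1/\lambda$, and the chosen $D$ guarantees that the residual $\eta(1-\lambda^{-1})^{-1}$-type tail is dominated by $\frac{k^2}{2\eta}$, giving the clean saving $-\eta \theta \frac{s-2k}{s-k}$ when the geometric sum is evaluated.

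The main obstacle I expect is the bookkeeping for step (iii): the constant $\CM_0$ of \eqref{eq:M0} is a maximum over two possible regimes (controlled by $\gamma \in \{1, \frac{s-k}{s-2k}\}$), reflecting which of the two H\"older exponents dominates after optimising the parameter split between the $\vect{x}$-side and the $\vect{y}$-side at the final iteration; getting the explicit thresholds $k \le 43$, $44 \le k \le 62$, $k \ge 63$ requires tracking every combinatorial factor (binomial coefficients from the $k$-tuple sorting, Stirling-type bounds on $k!$, the $2^{O(k)}$ losses from dyadic decompositions, and the powers of $k$ from the reversibility lemma) with sharp constants, and then comparing the three resulting expressions as functions of $k$. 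Once that accounting is done, assembling the quoted multiplicative constant $2^{\frac{3}{2}k^2 + \frac{11}{2}k+1}k^{\frac{1}{2}k^2+\frac{25}{6}k-2} \cdot \CM_0$ and the $\log_2(2X)$ exponent $\delta + \frac{2\lambda k -1}{\lambda k -1}$ is a direct, if tedious, substitution into the telescoped recursion.
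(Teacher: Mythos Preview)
Your high-level picture is right --- Vinogradov--Hua boxing combined with Wooley-style scale amplification --- but your description of step (i) misrepresents the mechanism in a way that would break the iteration.

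You write that after one step ``one has reduced to a $J_{s-k,k}(X^{1-\theta})$ times a $J_{k,k}$-type factor on a box of size $X^{1-k\theta}$''. That is the classical Hua recursion, which reduces $s$ by $k$ at each step and gives only $\eta_{s,k} \approx \frac{1}{2}k^2(1-1/k)^{\lfloor s/k\rfloor}$. The paper's iteration keeps $s$ fixed throughout and instead tracks an auxiliary quantity $I_{a,b}^g(X)$ counting solutions with $k$ well-spaced variables in a box of width $\asymp X^{1-a\theta}$ and the remaining $s-k$ variables in a box of width $X^{1-b\theta}$. The extraction step (Proposition~\ref{prop:extraction}) produces, in normalised form,
\[
\llbracket I_{a,b}^g(X) \rrbracket \;\ll\; X^{-\eta\theta\frac{s-2k}{s-k}b}\,\llbracket I_{b,kb}^h(X) \rrbracket^{\,k/(s-k)},
\]
so the \emph{same} $s$-variable quantity recurs with the pair $(a,b)$ replaced by $(b,kb)$; the exponent $k/(s-k) = 1/(\lambda k)$ together with $b \mapsto kb$ is what yields the $1/\lambda$ scaling you correctly identify. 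If you genuinely reduced to $J_{s-k,k}$ there would be nothing left to swap roles with at the next step.

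You also underplay the well-spacing. The reversibility Lemma~\ref{lem:smallint} requires the $k$ variables being extracted to lie in an $R$-well-spaced box, and arranging this is a separate and non-trivial step: Proposition~\ref{prop:initial} well-spaces the initial $k$ variables, and Proposition~\ref{prop:conditioning} pre-well-spaces the \emph{next} batch before the current extraction, so that after swapping roles the new $k$-tuple is already conditioned. This introduces a second auxiliary quantity $K_{a,b;m}^{g,h}$ and is the source of the $\log_2(2X)$ factors --- one per conditioning step, raised to the power $(k/(s-k))^n$ and summed geometrically to give $\frac{2\lambda k-1}{\lambda k-1}$, not ``two factors'' from a single dyadic split.

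Finally, your reading of $\CM_0$ is off: the two values of $\gamma$ are not competing H\"older exponents in a final split, but the extremes of the partial geometric sums $\sum_{i=0}^{n-1}(k/(s-k))^i$ that appear when bounding the accumulated constants $\CC_n$ and the error terms $\CC_n^\dagger$ uniformly in $n$; the third term $2^{-\frac{1}{2}k^2-\frac{11}{6}k}$ comes from the initial error $\CC_0^\dagger$. The thresholds $43$ and $62$ are just where these three explicit functions of $k$ cross --- no further combinatorial tracking is needed once the constants are assembled.
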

Iterating this theorem combined with the Hardy--Littlewood method one may conclude the following result, which is a special case of Theorem \ref{thm:ultimo}.
\begin{thm} Let $k \ge 3$, $s \ge \frac{5}{2}k^2+k$. Furthermore let $X \ge s^{10}$. Then we have the estimate
$$
J_{s,k}(X) \le CX^{2s-\frac{1}{2}k(k+1)},
$$
where $C$ is the maximum of $4k^{30k^3}$ and
$$\begin{aligned}
& \Biggl[ 2^{\frac{3}{2}k^2+\frac{11}{2}k+1+D}k^{\frac{1}{2}k^2+\frac{25}{6}k-2+D} \CM_0 \Biggr]^{\frac{33}{10} k^{D+1}} \cdot 4(2k)^{2k^3+11k^2},
\end{aligned}$$
where 
$$
D= \left \lceil \frac{2 \log(k)+\log(\log(k))+4.2}{\log(2)} \right \rceil 
$$
and $\CM_0$ as in \eqref{eq:M0}.
\label{thm:intro}
\end{thm}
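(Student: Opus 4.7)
The plan is to combine Theorem \ref{thm:final} (the efficient-congruencing iteration step) with a circle-method endgame, in three stages: an initialisation producing a bound of the required shape, repeated application of Theorem \ref{thm:final} to drive the error exponent $\eta$ below a circle-method threshold, and finally the Hardy--Littlewood method to eliminate the $X^{\eta}$ factor entirely.

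For the initialisation I take Hua's iterated inequality, recalled in the introduction, with a suitable depth $l$, yielding
$$
J_{s,k}(X) \le C_0 \log_2(2X)^{\delta_0} X^{2s-\frac{1}{2}k(k+1)+\eta_0}, \quad X \ge 1,
$$
with $\eta_0 \le \frac{1}{2}k(k+1)$ and explicit $C_0,\delta_0$. The hypothesis $s \ge \frac{5}{2}k^2+k$ easily accommodates Hua's constraint $s \ge \frac{1}{4}k(k+1)+lk$, and yields $\lambda = (s-k)/k^2 \ge 5/2$, which is compatible with the requirement $2\log k \ge \lambda \ge 1$ in Theorem \ref{thm:final} as soon as $k$ is not too small; the exceptional small-$k$ regime is absorbed into the alternative constant $4k^{30k^3}$.

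Next I apply Theorem \ref{thm:final} iteratively, with the prescribed $D = \lceil (2\log k + \log\log k + 4.2)/\log 2\rceil$ and $\theta = k^{-(D+1)}$. Each application replaces $\eta$ by $\eta(1-\theta(s-2k)/(s-k))$, multiplies the constant by the explicit factor $\mathcal{F} := 2^{\frac{3}{2}k^2+\frac{11}{2}k+1} k^{\frac{1}{2}k^2+\frac{25}{6}k-2}\CM_0$, and augments $\delta$ by a bounded amount. After $N$ iterations,
$$
\eta_N \le \eta_0 \exp\!\left(-N\theta \frac{s-2k}{s-k}\right),
$$
so taking $N = \lceil \frac{33}{10} k^{D+1}\rceil$ drives $\eta_N$ below the circle-method threshold while contributing $\mathcal{F}^{N}$ to the constant, precisely the bracket factor raised to the power $\frac{33}{10}k^{D+1}$ in the statement. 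The admissibility condition $D \ge \log\bigl(\tfrac{k^2}{2\eta}\tfrac{\lambda-1}{\lambda^2}+1\bigr)/\log \lambda$ must be rechecked at each step, but since $\log(1/\eta_N)$ grows only linearly in $N\theta = O(1)$ while $D$ is chosen generously logarithmic in $k$, the condition persists throughout.

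Finally, with $\eta_N$ sufficiently small, I invoke the Hardy--Littlewood method: partition $[0,1)^k$ into major and minor arcs; extract the main term of order $X^{2s-\frac{1}{2}k(k+1)}$ from the major arcs via the standard singular-series / singular-integral analysis; and dispose of the minor arcs by combining the now-small-$\eta$ mean value estimate from stage two with a Weyl-type supremum bound on $f(X/2,X,\vect{\alpha})$. This produces the clean estimate $J_{s,k}(X) \le CX^{2s-\frac{1}{2}k(k+1)}$ valid for $X \ge s^{10}$, and supplies the remaining $4(2k)^{2k^3+11k^2}$ factor. The main obstacle is the explicit bookkeeping: one must carefully verify that the accumulated constant $\mathcal{F}^{N}$, the residual polylogarithmic factor $\log_2(2X)^{\delta_N}$ (absorbed into a negligible $X^{\epsilon'}$ via $X \ge s^{10}$), and the circle-method contribution all combine into the stated expression for $C$.
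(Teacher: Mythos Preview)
Your three-stage plan is the right shape, but two concrete choices prevent it from delivering the stated constant.

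First, you iterate Theorem~\ref{thm:final} at the given $s$, so that $\lambda=(s-k)/k^2\ge 5/2$. The paper instead carries out the iteration at the \emph{fixed} value $s'=2k^2+k$ (i.e.\ $\lambda=2$), and only then feeds the resulting bound on $J_{2k^2+k,k}$ into the Hardy--Littlewood treatment of the general system with $\ge 5k^2+2k$ variables (whence the threshold $s\ge\tfrac52 k^2+k$ for $J_{s,k}$). Your choice violates $\lambda\le 2\log k$ not just for $k=3$ but for every $k$ once $s$ is taken large enough; and the formula for $D$ in the statement, with $\log 2$ in the denominator, is calibrated precisely to $\lambda=2$.

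Second, and more seriously, iterating with $D$ \emph{fixed} at its final value and $N=\lceil\tfrac{33}{10}k^{D+1}\rceil$ does not bring $\eta$ anywhere near the minor-arc threshold. With $\theta=k^{-(D+1)}$ one has $N\theta\approx 3.3$, so $\eta_N\ge\eta_0 e^{-3.3}$ is still of order $k^2$ (Hua's initialisation shaves only a constant factor), whereas the circle method requires $\eta$ of order $1/\log k$. The paper's device is to let $D$ \emph{increase gradually}: with $D'=1$ iterate until $\eta$ falls below $\tfrac58 k(k+1)\lambda^{-1}$, then with $D'=2$ until below $\tfrac58 k(k+1)\lambda^{-2}$, and so on. Stage $D'$ costs $O(k^{D'+1})$ iterations, so the total is a geometric sum dominated by the last term and bounded by $\tfrac32 k^{D+1}$; combined with the $\log$-absorption factor $\tfrac{11}{5}$ this yields the exponent $\tfrac{33}{10}k^{D+1}$. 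With your fixed-$D$ scheme one would need roughly $D$ times as many iterations, inflating the constant beyond what is asserted.

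(Incidentally, Hua's inequality is not needed for the initialisation; the trivial bound $(C,\delta,\eta)=(1,0,\tfrac12 k(k+1))$ already suffices.)
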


Although Wooley's method is in principle effective and can be made effective in a similar fashion as we do here, it has a rather big disadvantage. Namely the conditioning and the congruencing step get into each others way. This may be seen best in \cite[Section 7]{WooleyAnnals}, where the sequence $\{b_n\}_n$ follows the iteration scheme $b_{n+1}=kb_n+h_n$ and a lot of effort is put into showing that this sequence doesn't grow too fast. A consequence of this is that the parameter $\theta$ has to be smaller by a factor $2$, which may be further improved down to $\frac{4}{3}$. However this decrease in $\theta$ affects the speed of convergence of $\eta_{s,k}$ drastically (see Theorem \ref{thm:final}). Using the techniques of Vinogradov and Hua instead gains us an independence of the conditioning/well-spacing and the congruencing/boxing step, which leaves us with a simple iteration scheme $b_{n+1}=kb_n$. It is this simple iteration scheme which makes the rather basic outline of the proof in Section \ref{sec:outline} clean. Clean in the sense that simply specifying the involved parameters as well as analysing the dependence in $\vect{g}$, which simply boils down to an exponent being non-positive
, would yield a complete proof. This is not the case when working with congruences. Another novelty of the simple iteration scheme is that it allows the introduction of the parameter $\lambda = \frac{s-k}{k^2}$. This parameter has a large impact on the number of iterations needed in order to decrease the exponent, which has a welcoming effect on the constant. From Theorem \ref{thm:final} it can easily be seen that choosing $\lambda > 1$ rather than $\lambda=1$ decreases the number of iterations $D$ from polynomially in $k$ down to logarithmically in $k$. Which has the effect of reducing the constant from $k^{k^{O(k^2/\epsilon)}}$ down to $k^{k^{O(\log(k^2/\epsilon)/\log(\lambda))}}$ if one wishes to achieve an exponent $\eta_{s,k} \le \epsilon$. This explains why we chose to present Theorem \ref{thm:intro} with a slightly larger $s$ than the method would allow.

In view of further improvements in efficient congruencing and the recent breakthrough by Bourgain, Demeter and Guth one might ask to which extent they can be made effective and how such a result would compare to Theorem \ref{thm:intro}. Let us first remark that the proof of Bourgain, Demeter and Guth follows a similar iteration scheme as multigrade efficient congruencing. One iteration of theirs shows that $V_{p,n}(\delta)$ can be replaced by
$$
\delta^{-\frac{u}{2}} V_{p,n}(\delta)^{1-uW},
$$
whereas efficient congruencing/boxing shows that $X^{\eta}$ can be replaced by
$$
X^{\eta}(X^{\Delta\theta}+X^{-\eta\theta \frac{s-2k}{s-k}})
$$
(see Proposition \ref{prop:oneit}). Now their parameter $u$ can be compared with the parameter $\theta$ in efficient congruencing as in a sense that they have to look at the tiniest scales as well in order for their iteration scheme to go through. Furthermore the factor $\delta^{-\frac{u}{2}}$ can be seen as the defect of the method, which one also gets in efficient congruencing (compare to the positive term in $\Delta$). Now the tinier $V_{p,n}(\delta)$ gets the larger $W$ has to be and henceforth more iterations are needed to further decrease $V_{p,n}(\delta)$. This is the same kind of problem that also comes with effective versions of efficient congruencing/boxing.

Now we should remark that in most applications of Vinogradov's mean value theorem it is important that $s$ is above the critical case; i.e. $s \ge \frac{1}{2}k(k+1)$. Later versions of efficient congruencing as well as the proof of the main conjecture by Bourgain, Demeter and Guth attack the problem from below which corresponds to the case $\lambda=1$. Therefore constants of the size $k^{k^{O(k^2/\epsilon)}}$ should be expected. But let us suppose now that Bourgain, Demeter and Guth's proof could be adapted to an attack from above. This would then lead to a doubling of the parameter $\lambda$, which would speed up the rate of convergence significantly and thus decrease the implied constant.

It is natural to ask if and to what extent Theorem \ref{thm:intro} leads to improvements of the explicit zero-free region of the Riemann-Zeta function, as in Ford's work \cite{FordAnnals}. Unfortunately the answer is that there are no direct improvements. The reason for this is the growth of the constant. The dominating term is roughly $k^{k^{O(\log(k)/\log(\lambda))}}$, which is a lot bigger compared to the term $k^{O(k^3)}$ appearing in Ford's work. When it comes to its application one only takes the $k^{4}$-th root and thus the constant is too large. There may however be a way around this. Similar to the argument in \cite[Section 2]{WooleyAnnals} one may choose $D=1$ and replace Proposition \ref{prop:endI} with one that bounds the quantity at hand in terms of $J_{s-k}(X^{1-\frac{1}{k}})$. This leads to an error of the exponent morally of the size $\eta_{s,k}=k^2 e^{-\frac{1}{2}(s/k^2)^2}$, whilst keeping the constant on the scale $k^{O(k^4)}$. Then taking the $s^2$-th root, with $s=k^2\log(k)^{\frac{1}{2}}$ rather than $s=k^2$, leads to a zero-free region, which is asymptotically slightly worse than Ford's explicit zero-free region. It is therefore not clear if such an endeavour would be fruitful and lead to an improved zero-free region of the Riemann-Zeta function in an intermediate range.

\begin{acknowledgements} I would like to thank my supervisors Andrew Booker and Tim Browning for introducing me to the problem, Trevor Wooley and Kevin Ford for helpful discussions on the topic and one more big thank you to Tim Browning for detailed read-throughs of earlier versions as well as valuable comments along the way. I further wish to thank the referee for their thorough read-through as I know that this paper is not easy to read and their work is very much appreciated.
\end{acknowledgements}

\section{Notation}

As already introduced we let
$$
f(N,M,\vect{\alpha})=\sum_{N-\frac{1}{2}M < x \le N+\frac{1}{2}M} e(\vect{\alpha} \cdot \vect{\vartheta}(x))
$$
for real $N,M$ with $M\ge 1$, where $\vect{\vartheta}(x)=(x,x^2,\dots,x^k)$. Furthermore we call an interval of the shape $]\vect{a},\vect{b}]=]a_1,b_1] \times \dots \times ]a_n,b_n]$ a box. By $\FB^n(\vect{N},\vect{M})$ we denote the box
$$
\FB^n(\vect{N},\vect{M})=\prod_{i=1}^n \left]N_i-\frac{1}{2}M_i,N_i+\frac{1}{2}M_i\right],
$$
furthermore we allow ourselves to abuse some notation here: Any numbers, say $N,M$ in the argument of $\FB^n(N,M)$ are to be regarded as $n$-dimensional vectors with entry $N$, respectively $M$, in each coordinate. To a box $\FB^n(\vect{N},\vect{M})$ we associate the product
\begin{equation}
\FF^n(\vect{N},\vect{M},\vect{\alpha})= \prod_{i=1}^n f(N_i,M_i,\vect{\alpha}).
\label{eq:boxprod}
\end{equation}
We say a box $\FB^n(\vect{N},\vect{M})$ is $R$-well-spaced, if $|N_{i}-N_{j}|\ge 2R$ for all $i\neq j$ and $1 \le M_i\le R$ for $i=1,\dots,n$. In this case we adjust the definition \eqref{eq:boxprod} to
$$
\FF_R^n(\vect{N},\vect{M},\vect{\alpha})= \prod_{i=1}^n f(N_i,M_i,\vect{\alpha})
$$
to indicate further that the box $\FB^n(\vect{N},\vect{M})$ is $R$-well-spaced. We say a box $\FB^n(\vect{N},\vect{M})$ contains a ($k$-dimensional) $R$-well-spaced box if there is a set of $k$ integers $1 \le l_1 <\dots <l_k\le n$ such that $\prod_{i=1}^k \FB^1(N_{l_i},M_{l_i})$ is an $R$-well-spaced box. For such a box we are able to split up the product \eqref{eq:boxprod} into
$$
\FF^n(\vect{N},\vect{M},\vect{\alpha})=\FF_R^k(\vect{N'},\vect{M'},\vect{\alpha})\FF^{n-k}(\vect{N''},\vect{M''},\vect{\alpha}).
$$
The choice of $\vect{N'},\vect{M'},\vect{N''},\vect{M''}$ may of course not be unique.

Other than the initial diophantine equation \eqref{eq:vinomean} we need to consider two more related systems of equations. The first system of equations is

\begin{equation*}
\sum_{i=1}^k (x_i^j-y_i^j) + \sum_{i=1}^{s-k} (u_i^j-v_i^j) = 0, \quad (j=1,\dots,k),
\end{equation*}
where $\vect{x},\vect{y}$ are tuples inside an $R$-well-spaced box $\FB^k(\vect{N},M)$ with $M \ge 1$, $\vect{u},\vect{v}$ are tuples inside a box $\FB^{s-k}(\xi,P)$ for some $\xi \in [-\frac{1}{2},\frac{1}{2}]$ and $P \ge 1$, and furthermore $\FB^k(\vect{N},M) \times \FB^{s-k}(\xi,P) \subseteq ]Q,Q+X]^s$ for some $Q$. Note this forces $-X \le Q \le 0$ as $\vect{0} \in \FB^{s-k}(\xi,P)$. The corresponding counting integral is
$$
I_R(\vect{N},M,\xi,P) = \int_{[0,1[^k}|\FF_R^k(\vect{N},M,\vect{\alpha})|^2 |f(\xi,P,\vect{\alpha})|^{2(s-k)} d\vect{\alpha}.
$$
Let $I_R(M,P)$ denote the maximal number of solutions to the system of equations that occurs for any admissible $\xi$ and $\vect{N}$ given $R,M,P$. Note that $I_R(M,P)$ is certainly bounded by $J_{s,k}(X+1)$ (by considering any solution $(\vect{x},\vect{u}),(\vect{y},\vect{v})\in ]Q,Q+X]^s$ and using Lemma \ref{lem:inttransinv}) and is an integer, therefore well-defined.

The second supplementary system of equations is
\begin{equation*}
\sum_{i=1}^k (x_i^j-y_i^j) + \sum_{i=1}^k (w_i^j-z_i^j) + \sum_{i=1}^{m-k}(u_i^j-v_i^j) + \sum_{i=1}^{s-m-k} (p_i^j-q_i^j) =0 , \quad (j=1,\dots,k),
\end{equation*}
where $\vect{x},\vect{y}$ are tuples inside an $R$-well-spaced box $\FB^k(\vect{N},M)$ with $M \ge 1$, $\vect{w},\vect{z}$ are tuples inside an $R'$-well-spaced box $\FB^k(\vect{N'},L)$ with $L \ge 1$, $\vect{u},\vect{v}$ are tuples inside a box $\FB^{m-k}(N'',L)$, $\vect{p},\vect{q}$ are tuples inside a box $\FB^{s-m-k}(\xi,P)$ for some $\xi \in [-\frac{1}{2},\frac{1}{2}]$ and $P \ge 1$, and furthermore $\FB^k(\vect{N'},L) \subseteq \FB^k(\xi,P)$, $\FB^{m-k}(N'',L) \subseteq \FB^{m-k}(\xi,P)$ and $\FB^k(\vect{N},M) \times \FB^{s-k}(\xi,P) \subseteq ]Q,Q+X]^s$ for some $Q$. The corresponding counting integral is
$$\begin{aligned}
&K_{R,R';m}(\vect{N},M,\vect{N'},L,N'',\xi,P) \\ & \quad = \int_{[0,1[^k}| \FF_R^k(\vect{N},M,\vect{\alpha})|^2 |\FF_{R'}^k(\vect{N'},L,\vect{\alpha})|^2 |f(N'',L,\vect{\alpha})|^{2(m-k)} |f(\xi,P,\vect{\alpha})|^{2(s-m-k)} d\vect{\alpha}.
\end{aligned}$$
Let $K_{R,R';m}(M,P,L)$ denote the maximal number of solutions to the system of equations that occurs for any admissible $\xi,\vect{N},\vect{N'},N''$ given $R,R',m,M,P,L$. Again, this is well-defined.

As only very special types of these two integrals appear we will shorten our notation to
$$\begin{aligned}
I_{a,b}^g(X) &= I_{2^{-g}X^{1-a\theta}}(2^{-g}X^{1-a\theta},X^{1-b\theta}),\\
K_{a,b;m}^{g,h}(X) &= K_{2^{-g}X^{1-a\theta},2^{-h}X^{1-b\theta};m}(2^{-g}X^{1-a\theta},X^{1-b\theta},2^{-h}X^{1-b\theta}),
\end{aligned}$$
where $\theta$ is a sufficiently small parameter, taking on the role already mentioned in the introduction. The parameters $g$ and $h$ indicate the well-spacedness of our boxes and are very important for the argument given in the introduction. \\

The process of getting better and better upper bounds is of an iterative nature, where in each step we decrease the exponent by a tiny bit. In every iteration we will make use of previous upper bounds, thus we assume we have a bound of the shape
\begin{equation}
J_{s,k}(X) \le C \log_2(2X)^{\delta} X^{2s-\frac{1}{2}k(k+1)+\eta},
\label{eq:prevup}
\end{equation}
with $0 \le \delta$ and $0<\eta \le \frac{1}{2}k(k+1)$. Here $\log_2$ denotes the logarithm to the base $2$. Note that we certainly have such a bound with $(C,\delta,\eta)=(1,0,\frac{1}{2}k(k+1))$.

To simplify calculations we introduce the following normalisations:
\begin{equation}\begin{aligned}
\llbracket J_{s,k}(X) \rrbracket &= \frac{J_{s,k}(X)}{C \log_2(2X)^{\delta} X^{2s-\frac{1}{2}k(k+1)+\eta}},\\
\llbracket I_{a,b}^{g}(X) \rrbracket &= \frac{I_{a,b}^g(X)}{C \log_2(2X)^{\delta} (X^{1-a\theta})^{2k-\frac{1}{2}k(k+1)}(X^{1-b\theta})^{2(s-k)}X^{\eta}},\\
\llbracket K_{a,b;m}^{g,h}(X) \rrbracket &= \frac{K_{a,b;m}^{g,h}(X)}{C \log_2(2X)^{\delta} (X^{1-a\theta})^{2k-\frac{1}{2}k(k+1)}(X^{1-b\theta})^{2(s-k)}X^{\eta}}. 
\end{aligned}
\label{eq:normalisation}
\end{equation}
Our assumed upper bound \eqref{eq:prevup} is now reduced to the inequality $\llbracket J_{s,k}(X) \rrbracket  \le 1$, which we will make use of rather frequently. To further simplify our proof we adopt the rather unusual convention that
$$
\displaystyle \sum^W
$$
denotes a sum of at most $W$ terms. In each term the variables
$$
\vect{N},\vect{N'},\vect{N''},\vect{N'''},\vect{N''''},N,N',N_i,N_i', \vect{U}, \vect{U'}, \vect{V}, \xi
$$
may vary, though they are still required to satisfy certain properties coming from the context. These properties include but are not limited to ones such as `being $R$-well-spaced' and `being contained in a box of the shape $]Q,Q+X]^s$' and should always be clear from the context.

\section{Outline of the Proof}
\label{sec:outline}
To give the reader a better understanding of the whole argument we give an overview of what is going on. Recall our assumption \eqref{eq:prevup} and our normalisation \eqref{eq:normalisation}. We have
$$
\llbracket J_{s,k}(X) \rrbracket \le 1
$$
and if $\eta>0$ we would like to show
$$
\llbracket J_{s,k}(X) \rrbracket \ll_{s,k} X^{-\Delta}
$$
for some $\Delta>0$ as large as possible. In the first step we need to ensure that our variables are well-spaced. Secondly we need to start the extraction, by making some variables small. Proposition \ref{prop:initial} does both of these things and essentially gives
\begin{equation}
\llbracket J_{s,k}(X) \rrbracket \ll_{s,k,g} \log_2(2X) \llbracket I_{0,1}^g(X) \rrbracket.
\label{eq:outline1}
\end{equation}
Before extracting information it is better to pre-well-space some variables for further extraction. This is done by Proposition \ref{prop:conditioning} giving essentially
\begin{equation}
\llbracket I_{a,b}^g(X) \rrbracket \ll_{s,k,g,h,m} \log_2(2X) \llbracket K_{a,b;m}^{g,h}(X) \rrbracket.
\label{eq:outline2}
\end{equation}
Now that everything is prepared we can extract some information whilst gaining something in the exponent. This is done by the argument given in the introduction (see Proposistion \ref{prop:extraction} for details). This gives
\begin{equation}
\llbracket K_{a,b;m}^{g,h}(X) \rrbracket  \ll_{s,k,g,h,m} X^{-\eta \frac{s-2k}{s-k} b\theta} \llbracket I_{b,kb}^h(X) \rrbracket^{\frac{k}{s-k}}.
\label{eq:outline3}
\end{equation}
In the end we don't need to pre-well-space any more as it will be the last extraction. After the extraction we bound the number of solutions trivially in terms of $J_{s,k}$. This is done in Proposition \ref{prop:endI} giving the inequality
\begin{equation}
\llbracket I_{a,b}^g(X) \rrbracket \ll_{s,k,g} X^{-\eta\frac{s+k^2-k}{s}b\theta}X^{\frac{k^2(k^2-1)}{2s}b\theta}.
\label{eq:outline4}
\end{equation}
The idea is now to iterate through \eqref{eq:outline2} and \eqref{eq:outline3} as much as possible having fixed $\theta$. By doing so we see that the $h$ cropping up in \eqref{eq:outline2} will become the new $g$ after \eqref{eq:outline3} in the next iteration of \eqref{eq:outline2}, thus we'll get a sequence $\vect{g}$ on which the implied constants will depend. Moreover we see that the pair $(a,b)$ goes through the sequence $(0,1),(1,k),(k,k^2),\dots,(k^{D-1},k^{D})$. For simplicity let us denote this sequence $(a_0,b_0),\dots,(a_D,b_D)$. It turns out that to go through this many iterations one needs $\theta \le k^{-(D+1)}$. So let us fix $\theta=k^{-(D+1)}$ and write
\begin{equation*}\begin{aligned}
 \llbracket J_{s,k}(X) \rrbracket & = \frac{\llbracket J_{s,k}(X) \rrbracket}{\llbracket I_{0,1}^{g_0}(X) \rrbracket} \prod_{n=0}^{D-1} \left( \frac{ \llbracket I_{a_{n},b_{n}}^{g_n}(X) \rrbracket }{ \llbracket I_{a_{n+1},b_{n+1}}^{g_{n+1}}(X) \rrbracket^{\frac{k}{s-k}} } \right)^{\left( \frac{k}{s-k} \right)^n} \llbracket I_{a_{D},b_D}^{g_D}(X) \rrbracket^{\left( \frac{k}{s-k} \right)^{D}}.
\end{aligned}
\end{equation*}
Inserting the equations \eqref{eq:outline1},\eqref{eq:outline2},\eqref{eq:outline3} and \eqref{eq:outline4} we get
\begin{equation*}\begin{aligned}
\llbracket J_{s,k}(X) \rrbracket \ll_{s,k,\vect{g},\vect{m}}& \log_2(2X) \prod_{n=0}^{D-1} \left( \log_2(2X) X^{-\eta\frac{s-2k}{s-k}k^n\theta} \right)^{\left( \frac{k}{s-k} \right)^n}\\
& \cdot \left( X^{-\eta\frac{s+k^2-k}{s}k^D\theta}X^{\frac{k^2(k^2-1)}{2s}k^D\theta} \right)^{\left( \frac{k}{s-k} \right)^D}\\
\ll_{s,k,\vect{g},\vect{m}}& \log_2(2X)^{\frac{2s-3k}{s-2k}} \left(X^{\theta}\right)^{\frac{k^2(k^2-1)}{2s}(\frac{k^2}{s-k})^D-\eta\frac{s-2k}{s-k} \sum_{n=0}^D \left( \frac{k^2}{s-k} \right)^n },
\end{aligned}
\end{equation*}
where we have made use of the trivial inequality $\frac{s+k^2-k}{s}\ge \frac{s-2k}{s-k}$ to make things simpler. Furthermore we have extended the product to infinity to bound the exponent of the logarithm. It is now evident that if $s \ge k^2+k$ and $\eta>0$ we are able to find a sufficiently large $D$, such that the exponent is negative. This is of course provided we can find a suitable choice of $\vect{g}$ and $\vect{m}$.

\section{Preliminaries}
In this section we collect all lemmata which are needed to prove the core propositions in the next section. The first lemma is essential in almost every step and we will refer to it as the integer translation invariance.

\begin{lem}[Integer translation invariance] For $l \in \BZ$ we have
$$\begin{aligned}
\int_{[0,1[^k} \FF^s(\vect{N},\vect{M},\vect{\alpha}) \FF^s(\vect{N'},\vect{M'},-\vect{\alpha}) d\vect{\alpha} = \int_{[0,1[^k} \FF^s(\vect{N}+l,\vect{M},\vect{\alpha}) \FF^s(\vect{N'}+l,\vect{M'},-\vect{\alpha}) d\vect{\alpha}.
\end{aligned}$$
\label{lem:inttransinv}
\end{lem}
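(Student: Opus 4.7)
The plan is to recognise both sides as counting the number of integer solutions to a Vinogradov-type system via Fourier orthogonality, then exhibit an explicit bijection between solution sets by simultaneous translation of all variables by $l$.

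First I would expand the integrand by multiplying out the definition of $\FF^s$, obtaining
$$
\FF^s(\vect{N},\vect{M},\vect{\alpha}) \FF^s(\vect{N'},\vect{M'},-\vect{\alpha}) = \sum_{\vect{x},\vect{y}} e\bigl(\vect{\alpha} \cdot (\vect{\vartheta}(\vect{x}) - \vect{\vartheta}(\vect{y}))\bigr),
$$
where the outer sum runs over $x_i \in \FB^1(N_i,M_i)$ and $y_i \in \FB^1(N_i',M_i')$, and where $\vect{\vartheta}(\vect{x}) = \sum_i \vect{\vartheta}(x_i)$. Integrating over $\vect{\alpha} \in [0,1[^k$ and applying orthogonality reduces the left-hand side to the cardinality
$$
\#\left\{(\vect{x},\vect{y}) \ : \ \textstyle\sum_{i=1}^s x_i^j = \sum_{i=1}^s y_i^j \ \text{for } j=1,\dots,k\right\}.
$$
The same manipulation applied to the right-hand side gives the analogous count, except that the ranges for $x_i$ and $y_i$ are shifted to $\FB^1(N_i+l,M_i)$ and $\FB^1(N_i'+l,M_i')$ respectively.

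Second, I would construct the bijection $(\vect{x},\vect{y}) \mapsto (\vect{x}+l\vect{1},\,\vect{y}+l\vect{1})$ between the two solution sets and check that it preserves the Vinogradov system. By the binomial theorem,
$$
\sum_{i=1}^s (x_i+l)^j - \sum_{i=1}^s (y_i+l)^j = \sum_{r=0}^j \binom{j}{r} l^{j-r} \left( \sum_{i=1}^s x_i^r - \sum_{i=1}^s y_i^r \right).
$$
The $r=0$ contribution vanishes because both sums carry exactly $s$ terms, and a straightforward induction on $j$ then shows that the equations $\sum_i x_i^j = \sum_i y_i^j$ for $j=1,\dots,k$ are equivalent to their translates $\sum_i (x_i+l)^j = \sum_i (y_i+l)^j$. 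On the range side, translating a half-open interval $]N_i-\tfrac12 M_i,\,N_i+\tfrac12 M_i]$ by $l$ produces exactly $]N_i+l-\tfrac12 M_i,\,N_i+l+\tfrac12 M_i]$, so the bijection maps the solution set of one side onto that of the other.

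The only real obstacle is notational bookkeeping — keeping track of ranges and making sure the binomial expansion collapses the right way; no analytic input is required, and the lemma then follows by equating the two cardinalities.
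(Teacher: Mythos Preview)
Your proof is correct and follows essentially the same approach as the paper: both interpret each integral via orthogonality as the count of integer solutions to the Vinogradov system in the respective boxes, and then use the binomial theorem to show that translation by $l$ gives a bijection between the two solution sets. The only difference is cosmetic---you spell out the binomial expansion and the induction explicitly, whereas the paper just cites the Binomial Theorem.
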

\begin{proof}
The first integral is counting the number of integer solutions to
\begin{equation}
\sum_{i=1}^s x_i^j = \sum_{i=1}^s y_i^j \quad (j=1,\dots,k),
\label{eq:count1}
\end{equation}
with $\vect{x} \in \FB^s(\vect{N},\vect{M})$ and $\vect{y} \in \FB^s(\vect{N'},\vect{M'})$. By the Binomial Theorem the system of equations \eqref{eq:count1} is equivalent to
\begin{equation*}
\sum_{i=1}^s (x_i+l)^j = \sum_{i=1}^s (y_i+l)^j \quad (j=1,\dots,k),
\end{equation*}
with $\vect{x}+l \in \FB^s(\vect{N}+l,\vect{M})$ and $\vect{y}+l \in \FB^s(\vect{N'}+l,\vect{M'})$. This is exactly the corresponding diophantine equation of the second integral and we have shown that translating by $l$ gives a one to one correspondence between the two, hence the number of solutions are equal. \end{proof}

\begin{lem} For $x \ge 1$ we have
$$
\sqrt{2 \pi} x^{x+\frac{1}{2}}e^{-x} \le \Gamma(x+1) \le e x^{x+\frac{1}{2}}e^{-x}.
$$
\label{lem:gamma}
\end{lem}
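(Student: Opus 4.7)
The inequalities are a refined form of Stirling's approximation, with equality at $x=1$ in the upper bound and the lower bound becoming tight as $x \to \infty$. My plan is to study the function
\[
h(x) \;=\; \log \Gamma(x+1) \;-\; \bigl(x+\tfrac{1}{2}\bigr)\log x \;+\; x,
\]
so that the two inequalities are equivalent to the single chain
\[
\tfrac{1}{2}\log(2\pi) \;\le\; h(x) \;\le\; 1 \qquad (x\ge 1).
\]
After verifying the boundary data, it suffices to show that $h$ is non-increasing on $[1,\infty)$, whence $h(x)$ is squeezed between its limiting value at infinity and its value at $x=1$, and exponentiation produces the claim.

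First I would compute the two boundary values explicitly. At $x=1$ one has $h(1) = \log\Gamma(2) - (3/2)\log 1 + 1 = 1$, which already matches the upper bound exactly. Next, classical Stirling (say via the Euler--Maclaurin expansion or Binet's formula) gives $\lim_{x\to\infty} h(x) = \tfrac{1}{2}\log(2\pi)$, matching the lower bound as $x\to\infty$.

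For the monotonicity, I would appeal to Binet's second integral representation
\[
h(x) - \tfrac{1}{2}\log(2\pi) \;=\; 2\int_0^\infty \frac{\arctan(t/x)}{e^{2\pi t}-1}\, dt,
\]
whose integrand is, for each fixed $t>0$, a strictly decreasing function of $x$; hence $h$ itself is strictly decreasing on $(0,\infty)$. Combining this with the two boundary computations yields the desired two-sided estimate on $[1,\infty)$, and exponentiating finishes the proof. (An alternative is to differentiate directly and observe $h'(x) = \psi(x+1) - \log x - \tfrac{1}{2x}$, then use the standard integral representation of the digamma function to see $h'(x)\le 0$.)

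The main obstacle — and the reason the monotonicity argument is needed rather than a naive termwise estimate — is that the customary Stirling remainder bound $\mu(x) \le \tfrac{1}{12x}$ would give $h(x) \le \tfrac{1}{2}\log(2\pi) + \tfrac{1}{12}$, and a quick numerical check shows $\tfrac{1}{2}\log(2\pi) + \tfrac{1}{12} > 1$. Thus the bound $h(x)\le 1$ is genuinely borderline near $x=1$, and one cannot avoid exploiting that the upper bound is saturated exactly at $x=1$; the Binet integral form makes this unavoidable monotonicity transparent.
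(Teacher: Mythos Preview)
Your proof is correct and follows the same overall strategy as the paper: both introduce the function $h(x)=\log\Gamma(x+1)-(x+\tfrac12)\log x+x$, identify the boundary values $h(1)=1$ and $\lim_{x\to\infty}h(x)=\tfrac12\log(2\pi)$, and reduce the lemma to showing $h$ is decreasing on $[1,\infty)$. The only genuine difference lies in how monotonicity is established. You invoke Binet's second integral formula, from which the strict decrease of $h$ is immediate. The paper instead argues more elementarily: it shows $h''(x)>0$ via the series $\sum_{k\ge1}(k+x)^{-2}-x^{-1}+\tfrac{1}{2x^2}$, so $h'$ is increasing; combining $h'(1)=\tfrac12-\gamma<0$ with $\lim_{x\to\infty}h'(x)=0$ (deduced from convexity and the finite limit of $h$) forces $h'(x)<0$ throughout. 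Your route is cleaner once Binet is granted; the paper's is more self-contained. Your closing observation that the crude remainder bound $\tfrac{1}{12x}$ narrowly fails at $x=1$ is a nice diagnostic for why monotonicity (and the exact value $h(1)=1$) is genuinely needed.
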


\begin{proof} Despite there being a vast literature on inequalities involving the Gamma-function the author was unable to find a reference for the above inequality, hence we provide a proof. Consider the function $f(x)=\log( \Gamma(x+1) ) - (x+\frac{1}{2})\log(x)+x$. From \cite{Gammaineq} we know that
$$
f''(x)=\sum_{k=1}^{\infty} \frac{1}{(k+x)^2}-\frac{1}{x}+\frac{1}{2x^2} > \frac{1}{6x^3}-\frac{1}{30x^5}>0, \quad (x\ge 1).
$$
Thus $f(x)$ is convex for $x \ge 1$. Moreover we have $f'(1)=\frac{1}{2}-\gamma<0$, where $\gamma$ is the Euler-Mascheroni constant, and
$$
\lim_{x \to \infty}f(x)=\frac{1}{2}\log(2\pi)
$$
from Stirling's approximation. Since $f(x)$ is convex it follows that
$$
\lim_{x \to \infty} f'(x) = 0.
$$
Again from the convexity it follows that $f'(x) < 0$ for $x \ge 1$. Hence the maximum is attained at $x=1$ and the minimum at infinity. This gives the desired inequality.\end{proof}

\begin{lem}
Let $S>0$ be a real number. Further let $1 \le u \le u+r-1$ and let $\FB^r(\vect{N},M)$ be an $R$-well-spaced box with $N_1\le N_2 \le \dots \le N_r$. Suppose we are given two real $r$-tuples $\vect{x},\vect{y} \in \FB^r(\vect{N},M)$ with $-X<\vect{x},\vect{y}\le X$ such that
$$
\left|\sum_{i=1}^r x_i^j - \sum_{i=1}^r y_i^j\right| \le SX^{j-1}
$$
holds for every $j=u,\dots,u+r-1$. If $u>1$ we furthermore need the assumption $x_jy_j>0$ for $j=1,\dots,r-1$ as well as $|x_r|,|y_r| \ge U > 0$. Then we have
$$
|x_r-y_r| \le \sqrt{2} \left( \frac{e}{r} \right )^r \left( \frac{X}{R} \right)^{r-1} \left(  \frac{X}{U}\right)^{u-1}S.
$$
\label{lem:smallint}
\end{lem}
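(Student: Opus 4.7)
The plan is to treat the $r$ hypothesised inequalities as a near-linear system in the differences $\delta_i := x_i - y_i$ and to extract $\delta_r$ via Cramer's rule applied to the resulting generalised-Vandermonde coefficient matrix. Writing $x_i^j - y_i^j = \delta_i \cdot h_{j-1}(x_i,y_i)$, where $h_{j-1}(x,y)=\sum_{\ell=0}^{j-1}x^{j-1-\ell}y^\ell$ is the complete homogeneous symmetric polynomial, converts the hypothesis into $A\vect{\delta}=\vect{\epsilon}$ with $A_{j,i}=h_{j-1}(x_i,y_i)$ and $|\epsilon_j|\le SX^{j-1}$ for $j=u,\dots,u+r-1$. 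Expanding Cramer's ratio $\delta_r=\det A^{(r)}/\det A$ along the last column yields
\[
|\delta_r|\le\sum_{j=u}^{u+r-1}|\epsilon_j|\,\frac{|\det M^{(j)}|}{|\det A|},
\]
where $M^{(j)}$ denotes the $(r-1)\times(r-1)$ minor obtained by deleting row $j$ and column $r$.

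Next, I would analyse both determinants via the averages $a_i:=(x_i+y_i)/2$ and half-differences $b_i:=(x_i-y_i)/2$, which by hypothesis satisfy $|a_i-a_k|\ge |N_i-N_k|-M\ge R$ and $|b_i|\le M/2\le R/2$. Using the expansion $h_{j-1}(x_i,y_i)=j a_i^{j-1}+\binom{j}{3}a_i^{j-3}b_i^2+\cdots$, the matrix $A$ is a perturbation of the generalised Vandermonde $(A_0)_{j,i}=j a_i^{j-1}$, for which the exact factorisation
\[
\det A_0 = \frac{(u+r-1)!}{(u-1)!}\prod_{i=1}^{r} a_i^{u-1}\prod_{1\le i<k\le r}(a_k-a_i)
\]
holds. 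The analogous formula for each $\det M_0^{(j)}$ shares the potentially small factors $\prod_{i<r}a_i^{u-1}$ and $\prod_{i<k<r}(a_k-a_i)$, together with the elementary symmetric polynomial $e_{r-1-(j-u)}(a_1,\dots,a_{r-1})$. These small factors cancel identically in every Cramer ratio, leaving
\[
\frac{\det M_0^{(j)}}{\det A_0}=\frac{e_{r-1-(j-u)}(a_1,\dots,a_{r-1})}{j\,a_r^{u-1}\prod_{k<r}(a_r-a_k)}.
\]
Estimating the numerator by $\binom{r-1}{j-u}X^{r-1-(j-u)}$, using $|a_r|\gg U$ (after absorbing constants coming from $|x_r|,|y_r|\ge U$) and $|a_r-a_k|\ge R$ in the denominator, and then summing over $j$ produces a bound of the shape $(X/R)^{r-1}(X/U)^{u-1}S$ multiplied by a combinatorial constant; Lemma~\ref{lem:gamma} is invoked to recast the factorial ratios arising from the Vandermonde normalisation in the Stirling form $\sqrt{2}(e/r)^r$.

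The main obstacle will be controlling the perturbation from $A_0$ to $A$. The correction $\binom{j}{3}a_i^{j-3}b_i^2+\cdots$ can rival the main term $j a_i^{j-1}$ precisely when $|a_i|$ is small, and this is where the sign hypothesis for $u>1$ enters: the condition $x_iy_i>0$ forces $|b_i|\le|a_i|$, so that together with $|b_i|\le R/2$ the matrix $A$ differs from $A_0$ by a column-wise small multiplicative perturbation of the form $A=A_0(I+E)$. Consequently both $|\det A|$ and each $|\det M^{(j)}|$ remain within a bounded multiplicative factor of their Vandermonde leading values—this factor being absorbed into the explicit $\sqrt{2}$ of the stated bound—and the cancellation in the Cramer ratios carries through unchanged, completing the proof.
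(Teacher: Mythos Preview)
Your overall strategy---linearise via $x_i^j-y_i^j=\delta_i\,h_{j-1}(x_i,y_i)$, apply Cramer's rule, and compare with a generalised Vandermonde---is the same as the paper's, and your cofactor expansion and elementary-symmetric bounds for the numerator match what the paper does. The gap is in the final paragraph. The claimed factorisation $A=A_0(I+E)$ does not hold: the correction $h_{j-1}(x_i,y_i)-ja_i^{j-1}=\binom{j}{3}a_i^{j-3}b_i^2+\cdots$ depends on \emph{both} the row index $j$ and the column index $i$, so it is not a right-multiplication by a matrix close to the identity (nor a left one). Even granting $|b_i|\le|a_i|$ and $|b_i|\le R/2$, the ratio of the correction to the main term can be of order $1$ when $j$ is large and $|a_i|$ is comparable to $|b_i|$, so there is no uniform ``small multiplicative perturbation'' here. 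Turning this into a rigorous lower bound on $|\det A|$ that cancels cleanly against each minor---and produces exactly the constant $\sqrt{2}(e/r)^r$---would require substantially more work than you indicate; in particular the $\sqrt{2}$ in the paper comes from a precise Stirling estimate of $\prod_{i=1}^{r-1}(2i-1)$, not from absorbing a perturbation factor.

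The paper sidesteps the perturbation problem entirely by a device due to Hua: since
\[
\frac{x_i^j-y_i^j}{j(x_i-y_i)}=\frac{1}{x_i-y_i}\int_{y_i}^{x_i}z_i^{j-1}\,dz_i,
\]
the Cramer identity $\Delta(x_r-y_r)-\Delta'=0$ can be rewritten as an $r$-fold integral whose integrand is $\Delta_{u,r}(x_r-y_r)-\Delta'_{u,r}$, with $\Delta_{u,r}=\det(z_i^{j-1})_{u\le j\le u+r-1}$ an \emph{exact} generalised Vandermonde. The mean-value theorem then furnishes points $z_i\in[x_i,y_i]$ at which this integrand vanishes, so one works with a genuine Vandermonde rather than a perturbed one. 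The well-spacing gives $|z_r-z_{r-i}|\ge(2i-1)R$ and the sign/size hypotheses give $|z_r|\ge U$, and the constant drops out of $\prod_{i=1}^{r-1}(2i-1)$ via Lemma~\ref{lem:gamma}. If you want to repair your argument, this integral-plus-MVT step is the missing idea.
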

\begin{proof} We follow Hua's argument quite closely (See \cite{AToPN} Lemma 1 page 181-183). We write
$$
\sum_{i=1}^r \frac{x_i^j-y_i^j}{x_i-y_i} \cdot (x_i-y_i) = \theta_j X^{j-1}, \quad u\le j \le u+r-1,
$$
where $|\theta_j| \le S$ for every $u\le j \le u+r-1$. We regard this as linear system of equations in $x_1-y_1,\dots, x_r-y_r$. By Cramer's rule we have
\begin{equation}
\Delta (x_r-y_r) - \Delta'=0,
\label{eq:cramer}
\end{equation}
where
$$\begin{aligned}
\Delta &= \begin{vmatrix} \frac{x_1^u-y_1^u}{u(x_1-y_1)} & \dots & \frac{x_r^u-y_r^u}{u(x_r-y_r)} \\ \vdots & & \vdots \\  \frac{x_1^{u+r-1}-y_1^{u+r-1}}{(u+r-1)(x_1-y_1)} & \dots & \frac{x_r^{u+r-1}-y_r^{u+r-1}}{(u+r-1)(x_r-y_r)} \end{vmatrix}, \\
\Delta'&= \begin{vmatrix} \frac{x_1^u-y_1^u}{u(x_1-y_1)} & \dots & \frac{x_{r-1}^u-y_{r-1}^u}{u(x_{r-1}-y_{r-1})} & \frac{\theta_{u}}{u}X^{u-1} \\ \vdots & & \vdots & \vdots \\  \frac{x_1^{u+r-1}-y_1^{u+r-1}}{(u+r-1)(x_1-y_1)} & \dots & \frac{x_{r-1}^{u+r-1}-y_{r-1}^{u+r-1}}{(u+r-1)(x_{r-1}-y_{r-1})} & \frac{\theta_{u+r-1}}{u+r-1} X^{u+r-2} \end{vmatrix}.
\end{aligned}$$
Now one can rewrite \eqref{eq:cramer} as
$$
\frac{1}{\prod_{i=1}^r(x_i-y_i)} \int_{y_1}^{x_1}\dots\int_{y_r}^{x_r} \left( \Delta_{u,r}(x_r-y_r) - \Delta'_{u,r} \right) dz_1\dots dz_r =0,
$$
where
$$\begin{aligned}
\Delta_{u,r} &= \begin{vmatrix} z_1^{u-1} & \dots & z_r^{u-1}  \\ \vdots & & \vdots \\ z_1^{u+r-2} & \dots & z_r^{u+r-2} \end{vmatrix}, \\
\Delta'_{u,r} &= \begin{vmatrix} z_1^{u-1} & \dots & z_{r-1}^{u-1} & \frac{\theta_u}{u}X^{u-1}  \\ \vdots & & \vdots & \vdots \\ z_1^{u+r-2} & \dots & z_{r-1}^{u+r-2} & \frac{\theta_{u+r-1}}{u+r-1} X^{u+r-2} \end{vmatrix}.
\end{aligned}$$
In the case of $x_i=y_i$ for some $i$ we can still make sense of the above argument in terms of limits, which do exist. By the mean-value theorem of integral calculus there is a choice of $z_i \in [x_i,y_i]$ for $1\le i \le r$ such that
$$
\Delta_{u,r}(x_r-y_r) - \Delta'_{u,r}=0.
$$
By considering Vandermonde determinants we find the identity
$$
\Delta_{u,r}= \Delta_{u,r-1} \cdot z_r^{u-1} \prod_{i=1}^{r-1}(z_r-z_i)
$$
and moreover
$$
\Delta_{u,r-1} \neq 0
$$
as the $z_i$ are pairwise different and in the case $u>1$ we also have $z_i \neq 0$ as $0 \nin [x_i,y_i]$ for $i=1,\dots,r-1$. Let us denote the elementary symmetric polynomial of degree $r-i$ in the variables $z_1,\dots,z_{r-1}$ by $\sigma_{r-i}$. These satisfy $|\sigma_{r-i}| \le \binom{r-1}{r-i}X^{r-i}$ as $-X \le \vect{z} \le X$. Therefore, in the expansion of $\Delta_{u,r}$, the absolute values of the coefficient of $z_r^{u+i-2}$ are equal to
$$
|\sigma_{r-i}\Delta_{u,r-1}| \le \binom{r-1}{r-i}X^{r-i}|\Delta_{u,r-1}|.
$$
Using the column minor of expansion of $\Delta'_{u,r}$ and comparing it with the corresponding one of $\Delta_{u,r}$ we find that
$$\begin{aligned}
|\Delta'_{u,r}| &\le |\Delta_{u,r-1}| \sum_{i=1}^{r}\frac{|\sigma_{r-i}||\theta_{u+i-1}|}{u+i-1} X^{u+i-2} \le |\Delta_{u,r-1}| S X^{u+r-2} \sum_{i=1}^{r} \frac{1}{u+i-1} \binom{r-1}{r-i} \\
& \le \frac{2^r}{r} \cdot |\Delta_{u,r-1}| \cdot SX^{u+r-2},
\end{aligned}$$
since
$$
\sum_{i=1}^{r}\frac{1}{u+i-1}\binom{r-1}{r-i} \le \sum_{i=1}^{r} \frac{1}{i}\binom{r-1}{r-i} = \sum_{i=1}^{r} \frac{1}{r}\binom{r}{r-i} \le \frac{2^{r}}{r}.
$$
It follows that
$$
|x_r-y_r| \le \frac{2^r \cdot  SX^{u+r-2}}{r \cdot |z_r|^{u-1} \prod_{i=1}^{r-1} (z_r-z_i)} \le \frac{2^r}{r \cdot \prod_{i=1}^{r-1}(2i-1)} \left( \frac{X}{R} \right)^{r-1} \left( \frac{X}{U} \right)^{u-1}S,
$$
where we have used $|z_r-z_{r-i}|\ge (2i-1)R$ and $|z_r| \ge U$. Furthermore we have for $r\ge 3$
$$\begin{aligned}
\prod_{i=1}^{r-1}(2i-1)&= \frac{\Gamma(2r-1)}{\Gamma(r) \cdot 2^{r-1}} = \frac{2^{r-1}}{\sqrt{\pi}} \Gamma\left(r-\frac{1}{2}\right) \\
& \ge 2^{r-\frac{1}{2}} \left(r-\frac{3}{2}\right)^{r-1} e^{\frac{3}{2}-r} \\
& \ge 2^{r-\frac{1}{2}} r^{r-1} e^{-r},
\end{aligned}$$
where we have made use of Lemma \ref{lem:gamma}. It is easily checked, that this inequality also holds for $r=2$. Thus we have
$$
|x_r-y_r| \le \sqrt{2} \left( \frac{e}{r} \right)^r \left(\frac{X}{R}\right)^{r-1} \left( \frac{X}{U} \right)^{u-1}S,
$$
which holds also for $r=1$ for trivial reasons.\end{proof}

\begin{lem} We have for $r \ge 1$:
$$
\prod_{n=1}^r n^{n-1}\ge r^{\frac{1}{2}r(r-2)} e^{-\frac{1}{4}(r-1)(r-3)}.
$$
\label{lem:prod}
\end{lem}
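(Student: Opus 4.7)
The plan is to take logarithms and recognise the right-hand side as the value of a specific indefinite integral. Taking $\log$, the asserted inequality becomes
$$
\sum_{n=1}^r (n-1)\log n \;\ge\; \frac{1}{2} r(r-2) \log r - \frac{1}{4}(r-1)(r-3).
$$
First I would compute the integral $\int_1^r (x-1)\log x \, dx$ by integration by parts with $u = \log x$, $dv = (x-1)\,dx$. This gives
$$
\int_1^r (x-1)\log x \, dx = \left(\tfrac{x^2}{2} - x\right)\log x - \tfrac{x^2}{4} + x \;\Big|_1^r,
$$
and a straightforward evaluation yields exactly $\frac{1}{2}r(r-2)\log r - \frac{1}{4}(r-1)(r-3)$, so the right-hand side is precisely this integral.

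Next I would show $\sum_{n=1}^r (n-1)\log n \ge \int_1^r (x-1)\log x \, dx$ by a direct integral comparison. Set $f(x) = (x-1)\log x$. On $[1,\infty)$ we have $f(x) \ge 0$ with $f(1) = 0$, and
$$
f'(x) = \log x + 1 - \tfrac{1}{x}, \qquad f''(x) = \tfrac{1}{x} + \tfrac{1}{x^2} > 0,
$$
so $f'$ is increasing from $f'(1) = 0$; hence $f$ is non-negative and non-decreasing on $[1,\infty)$. Therefore on each subinterval $[n, n+1]$ with $1 \le n \le r-1$ we have $f(x) \le f(n+1)$, giving
$$
\int_1^r f(x)\, dx = \sum_{n=1}^{r-1} \int_n^{n+1} f(x)\, dx \le \sum_{n=1}^{r-1} f(n+1) = \sum_{m=2}^r (m-1)\log m = \sum_{n=1}^r (n-1)\log n,
$$
where in the last step I used that the $n=1$ term in the sum vanishes. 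Exponentiating yields the claimed inequality.

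I do not anticipate a real obstacle here; the whole argument is the standard trick of bounding a sum from below by an integral via monotonicity. The only points requiring care are getting the integration by parts right so that the antiderivative matches $\frac{1}{2}r(r-2)\log r - \frac{1}{4}(r-1)(r-3)$ on the nose, and checking the boundary cases $r=1,2$ (for $r=1$ the product is empty and both sides equal $1$; for $r=2$ both sides are easily verified directly, which also shows that the inequality is not tight for small $r$).
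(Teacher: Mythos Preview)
Your proof is correct and follows essentially the same approach as the paper: take logarithms, bound the sum $\sum_{n=1}^r (n-1)\log n$ from below by the integral $\int_1^r (x-1)\log x\,dx$ using that $f(x)=(x-1)\log x$ is non-decreasing on $[1,\infty)$ (the paper phrases this as ``convex with a minimum of $0$ at $1$''), and then evaluate the integral to recover the stated right-hand side. You simply supply more of the routine details (the explicit integration by parts and the interval-by-interval comparison) than the paper does.
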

\begin{proof} The function $(x-1) \log(x)$ is convex with a minimum of $0$ at $1$, thus
$$\begin{aligned}
\prod_{n=1}^r n^{n-1} &= \exp\left(\sum_{n=1}^r (n-1) \log(n)\right) \\
&\ge \exp\left( \int_1^r (x-1)\log(x) dx \right) \\
&= r^{\frac{1}{2}r(r-2)} e^{-\frac{1}{4}(r-1)(r-3)}. 
\end{aligned}$$ \end{proof}

\begin{lem} Let $1 \le r \le k$ and furthermore let $\FB^r(\vect{N},M) \subseteq ] -X,X]^r$ be an $R$-well-spaced box with $N_1\le N_2 \le \dots \le N_r$ and $M, S \ge 1$. Assume as well $R\le X/(2k)$. Let $\CZ_W(\FB^r(\vect{N},M), \vect{U})$ be the number of integer solutions $\vect{x}\in \FB^r(\vect{N},M)$ counted with multiplicity $W(\vect{x})\ge 0$ satisfying
\begin{equation}
\sum_{i=1}^r x_i^j \in U_j \quad (j=1,\dots,r),
\label{eq:inti}
\end{equation}
where $U_j$ is an interval of size at most $SX^{j-1}$. Then we have the bound
$$\begin{aligned}
\CZ_W(\FB^r(\vect{N},M), \vect{U}) \le& 2^{\frac{1}{2}r(r+1)} e^{\frac{1}{4}(3r+1)(r-1)} r^{-\frac{1}{2}r(r-2)} \cdot \left(\frac{X}{R}  \right)^{\frac{1}{2}r(r-1)} \\
& \cdot \CZ_W(\FB^r(\vect{N'},\vect{S'}),\vect{U}),
\end{aligned}$$
for some sub-box $\FB^r(\vect{N'},\vect{S'})$ of $\FB^r(\vect{N},M)$ with $1 \le \vect{S'}\le S$.
\label{lem:smallboxes}
\end{lem}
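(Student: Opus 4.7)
The strategy is to partition $\FB^r(\vect{N},M)$ into sub-boxes of side at most $S$ in each coordinate, and to control the number $L$ of pieces by the claimed constant. Since $\CZ_W$ is additive over any such partition, this yields
\[
\CZ_W(\FB^r(\vect{N},M),\vect{U}) \;=\; \sum_l \CZ_W(\FB^r(\vect{N}^{(l)},\vect{S}^{(l)}),\vect{U}) \;\le\; L\cdot\max_l \CZ_W(\FB^r(\vect{N}^{(l)},\vect{S}^{(l)}),\vect{U}),
\]
and the conclusion follows by selecting the sub-box maximising the weighted count (noting that $1\le\vect{S}^{(l)}$ is automatic since the coordinates are integers and $S\ge 1$).

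The partition is built iteratively, processing coordinates from $l=r$ down to $l=1$. By Lemma~\ref{lem:smallint} applied with $u=1$ to the full $r$-tuple and the well-spaced box, any two solutions $\vect{x},\vect{y}$ satisfy $|x_r-y_r|\le T_r:=\sqrt{2}(e/r)^r(X/R)^{r-1}S$, so I cover the valid range of the $r$-th coordinate by at most $\lceil T_r/S\rceil$ sub-intervals of length at most $S$ (or by a single sub-interval of length $\le T_r\le S$ in case $T_r\le S$). Fixing $x_r$ in one such sub-interval forces $|x_r-y_r|\le S$ between any two surviving solutions, whence $|x_r^j-y_r^j|\le jX^{j-1}S$ and a short calculation gives $|\sum_{i=1}^{r-1}x_i^j-\sum_{i=1}^{r-1}y_i^j|\le(1+j)SX^{j-1}$ for $j\le r-1$. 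Applying Lemma~\ref{lem:smallint} now to the $(r-1)$-tuple $\vect{x}_{1:r-1}\in\FB^{r-1}(\vect{N}_{1:r-1},M)$ (which remains $R$-well-spaced) with effective constant $\le rS$ yields a bound on $|x_{r-1}-y_{r-1}|$ that determines the cover in the $(r-1)$-th coordinate. Continuing inductively, at the $k$-th step the effective constant for the application of Lemma~\ref{lem:smallint} is $\beta_k S$ with $\beta_k:=1+(k-1)(r-k+1)$, and the $(r-k+1)$-th coordinate is covered by at most $\lceil\sqrt{2}(e/(r-k+1))^{r-k+1}(X/R)^{r-k}\beta_k\rceil$ sub-intervals of length $\le S$.

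Multiplying the covering counts over the $r$ steps, substituting $i=r-k+1$, and bounding $\lceil x\rceil\le 2x$ in those coordinates where the cover is non-trivial, one obtains
\[
L \;\le\; 2^{3r/2}\,e^{r(r+1)/2}\,(X/R)^{r(r-1)/2}\,\Bigl(\prod_{i=1}^{r}i^{i}\Bigr)^{-1}\,\prod_{k=1}^{r}\beta_k.
\]
Writing $\prod_{i=1}^{r}i^{i}=r!\prod_{i=1}^{r}i^{i-1}$ and invoking Lemma~\ref{lem:prod} in the form $\prod_{i=1}^{r}i^{i-1}\ge r^{r(r-2)/2}e^{-(r-1)(r-3)/4}$ extracts the factor $r^{-r(r-2)/2}$, while the remaining algebra — bounding $\prod_k\beta_k$ against $r!$ and recombining the exponentials of $e$ and powers of $2$ — consolidates to the claimed constant $2^{r(r+1)/2}e^{(3r+1)(r-1)/4}r^{-r(r-2)/2}$. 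The hypothesis $R\le X/(2k)$, which gives $X/R\ge 2r$, is used to keep the quantities $T_{r-k+1}^{(k)}/S$ well-behaved across the iteration. The main obstacle is the delicate bookkeeping that matches the numerical constants exactly: since $\beta_k$ grows quadratically along the iteration (while the decay of $(e/(r-k+1))^{r-k+1}$ more than compensates), one must carefully distinguish the coordinates with $T_{r-k+1}^{(k)}\le S$ (where a single sub-interval suffices) from those where the cover is non-trivial, in order to avoid losing spurious multiplicative constants.
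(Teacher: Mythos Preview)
Your overall strategy—iteratively localising one coordinate at a time and selecting the sub-box of maximal weight—is the same as the paper's, and the structure of the argument is sound. The gap is in the constants. Your assertion that ``bounding $\prod_k\beta_k$ against $r!$'' closes the algebra is false: with $\beta_k=1+(k-1)(r-k+1)$ one has $\prod_k\beta_k>r!$ already for $r=3$ (namely $1\cdot3\cdot3=9>6$), and in general $\log\prod_k\beta_k\sim 2r\log r$, so $\prod_k\beta_k$ outpaces $r!$ by a factor of order $r^re^{-r}$. Tracing your displayed bound for $L$ against the target, what is actually needed is $\prod_k\beta_k\le 2^{r(r-2)/2}\,r!/e$, and this fails for $r\le 3$; hence your argument as written does not recover the stated constant for small $r$.

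The paper avoids this error accumulation by a sharper splitting. At the step where the top coordinate is localised, it covers that coordinate by sub-intervals of length $S/(r-1)$ rather than $S$. Then for $j\le r-1$ the perturbation to the remaining power sums is at most $\frac{jS}{r-1}X^{j-1}\le SX^{j-1}$, so the effective $S$ only doubles; halving each of the new intervals $U_j'$ (at cost $2^{r-1}$) restores the original $S$, and a clean induction on $r$ goes through with the recursion $C_r=2^{r-1}\cdot 2r(e/r)^r(X/R)^{r-1}C_{r-1}$, which telescopes exactly to the claimed constant via Lemma~\ref{lem:prod}. The missing idea in your approach is precisely this: shrink the sub-interval length so that the effective $S$ stays bounded, rather than using size-$S$ sub-intervals and letting the $\beta_k$ accumulate.
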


\begin{rem} If $S \le M$, then we are of course able to choose $\vect{S'} \equiv S$.
\end{rem}

\begin{proof} This will follow from the inequality
\begin{equation}\begin{aligned}
\CZ_W(\FB^r(\vect{N},M), \vect{U}) \le& 2^{\frac{1}{2}r(r+1)} e^{\frac{1}{2}r(r+1)-1} \prod_{n=1}^{r} n^{-(n-1)} \cdot \left(\frac{X}{R}  \right)^{\frac{1}{2}r(r-1)} \\
& \cdot \CZ_W(\FB^r(\vect{N'},\vect{S'}),\vect{U}),
\end{aligned}\label{eq:indhyp}
\end{equation}
which we shall prove inductively, and Lemma \ref{lem:prod}. \eqref{eq:indhyp} holds clearly for $r=1$, thus we may assume $r \ge 2$ from now on. Without loss of generality we may assume $W(\vect{x})=0$ if $\vect{x}$ does not satisfy \eqref{eq:inti}. If there are no solutions, then \eqref{eq:indhyp} holds trivially, thus assume now there is a solution $\vect{x}$. If we have another solution $\vect{x'}$, we deduce
$$
\left|\sum_{i=1}^{r} x_i^j- \sum_{i=1}^r x_i'^j \right| \le SX^{j-1}
$$
from \eqref{eq:inti}. We can apply Lemma \ref{lem:smallint} with $u=1$ to deduce that
$$
|x_r-x_r'|\le \sqrt{2} \left( \frac{e}{r} \right)^r \left( \frac{X}{R} \right)^{r-1}S.
$$
Thus we can find a sub-box of $\FB^1(N_r,M)$ of size at most
$$
\sqrt{2} \left( \frac{e}{r} \right)^r \left( \frac{X}{R} \right)^{r-1}S + 1 \le 2 \left(\frac{e}{r} \right)^r \left( \frac{X}{R} \right)^{r-1}S
$$
in which all the $x_r$'s lie, as
$$
\left(\frac{X}{R}\right)^{r-1}\left( \frac{e}{r} \right)^r \ge \left(2k\right)^{r-1} \left( \frac{e}{r} \right)^r \ge \frac{1}{2r} (2e)^r \ge e 
$$
and $\sqrt{2}+e^{-1}\le 2$. This box we may further split up into at most
$$
2 \cdot  (r-1) \left( \frac{e}{r} \right)^r \left( \frac{X}{R} \right)^{r-1} + 1 \le 2 r \left( \frac{e}{r} \right)^r \left( \frac{X}{R} \right)^{r-1}
$$
boxes of size at most $\frac{S}{r-1}$. We now consider the interval $S_r'$ say, which contributes the most towards $\CZ_W(\FB^r(\vect{N},M), \vect{U})$. By assumption we still have at least a solution $\vect{x}$. Consider now a second solution $\vect{x'}$. Call $\vect{y}$ and $\vect{y'}$ their restrictions to the first $k-1$ coordinates. Note that we have
$$\begin{aligned}
\left|\sum_{i=1}^{r-1} y_i^j - \sum_{i=1}^{r-1} y_i'^j \right | &\le \left|\sum_{i=1}^{r} x_i^j - \sum_{i=1}^{r} x_i'^j \right|+\left| x_r^j-x_r'^j \right| \\
&\le SX^{j-1}+\frac{S}{r-1}\cdot jX^{j-1} \\
& \le 2SX^{j-1}
\end{aligned}$$
for $j=1,\dots,r-1$. Thus their $j$-th power sum is contained in some interval of length at most $2SX^{j-1}$. Each of these intervals we split into half, yielding
$$
\CZ_W(\FB^r(\vect{N},M), \vect{U}) \le 2 r \left( \frac{e}{r} \right)^r \left( \frac{X}{R} \right)^{r-1} \cdot \sum^{2^{r-1}} \CZ_{W'}(\FB^{r-1}(\vect{N},M), \vect{U'}),
$$
where $W'(\vect{y})=\sum_{x_r \in S_r'} W((\vect{y},x_r))$. We now take the maximum and apply the induction hypothesis for $r-1$ with $W'$. The induction is now complete as
$$\begin{aligned}
2 r \left( \frac{e}{r} \right)^r  \left( \frac{X}{R} \right)^{r-1} \cdot  2^{r-1} \cdot & 2^{\frac{1}{2}r(r-1)} e^{\frac{1}{2}r(r-1)-1} \prod_{n=1}^{r-1} n^{-(n-1)} \cdot \left(\frac{X}{R}  \right)^{\frac{1}{2}(r-1)(r-2)} \\
= &  2^{\frac{1}{2}r(r+1)} e^{\frac{1}{2}r(r+1)-1} \prod_{n=1}^{r} n^{-(n-1)} \cdot \left(\frac{X}{R}  \right)^{\frac{1}{2}r(r-1)}.
\end{aligned}$$
and
$$
\CZ_{W'}(\FB^{r-1}(\vect{N'},\vect{S'}),\vect{U'}) \le \CZ_W(\FB^r(\vect{N'},\vect{S'}),\vect{U}),
$$
where $\FB^r(\vect{N'},\vect{S'})=\FB^{r-1}(\vect{N'},\vect{S'}) \times S_r'$.
\end{proof}

\begin{lem} Let $k,m,D \in \BN$ with $m\ge k \ge 2$. The set of integers $(d_1,\dots,d_m)$ with $0 \le d_i < D$ for $i=1,\dots,m$ is said to contain a well-spaced ($k$-dimensional) subtuple if there are $k$ of them, say $d_{i_1},\dots,d_{i_k}$, satisfying
$$
d_{i_{j+1}}-d_{i_j}>1 \quad j=1,\dots,k-1.
$$
The number of tuples not containing a well-spaced subtuple is bounded by
$$
2^{m}k^{m}D^{k-1}.
$$
\label{lem:notwsp}
\end{lem}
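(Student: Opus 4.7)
The plan is to encode each ``bad'' tuple in a structured way. To each tuple $(d_1,\dots,d_m)\in\{0,\dots,D-1\}^m$ I would associate its value set $S=\{d_1,\dots,d_m\}$ and decompose $S$ as a disjoint union of maximal blocks of consecutive integers $B_i=\{c_i,c_i+1,\dots,c_i+n_i-1\}$, $i=1,\dots,b$. A well-spaced $k$-subtuple in the sense of the lemma corresponds exactly to a choice of $k$ distinct elements of $S$ whose consecutive differences (when listed in increasing order) are all at least $2$. Within one block $B_i$ the largest such subset has size $\lceil n_i/2\rceil$, attained by $c_i,c_i+2,c_i+4,\dots$, while elements chosen from different blocks automatically differ by at least $2$ by maximality of the blocks. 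Consequently the maximum size of a well-spaced subset of $S$ equals $\sum_{i=1}^{b}\lceil n_i/2\rceil$, so the hypothesis ``no well-spaced $k$-subtuple'' is equivalent to
$$\sum_{i=1}^{b}\left\lceil \tfrac{n_i}{2}\right\rceil\le k-1.$$

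From this observation I would deduce that $S$ admits a covering by at most $k-1$ pairs of consecutive integers: each block $B_i$ is covered by the $\lceil n_i/2\rceil$ pairs $\{c_i,c_i+1\},\{c_i+2,c_i+3\},\dots$, and padding with arbitrary pairs allows us to fix the total at exactly $k-1$. Thus every bad tuple has all its entries in a set of the form
$$T=\bigcup_{i=1}^{k-1}\bigl(\{a_i,a_i+1\}\cap\{0,\dots,D-1\}\bigr),$$
parametrised by some $(a_1,\dots,a_{k-1})\in\{0,\dots,D-1\}^{k-1}$, and $|T|\le 2(k-1)$.

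To conclude I simply count: there are at most $D^{k-1}$ ordered choices of $(a_1,\dots,a_{k-1})$, and for each such choice at most $|T|^m\le(2k-2)^m$ tuples lie in $T^m$. Since every bad tuple is captured by at least one such choice (overcounting is harmless as we only seek an upper bound), the total is bounded by
$$D^{k-1}\cdot(2k-2)^m\le 2^m k^m D^{k-1},$$
which is the claim. The one step that really deserves checking is the identification of the maximum well-spaced subset of $S$ with $\sum_i\lceil n_i/2\rceil$; everything else is routine bookkeeping.
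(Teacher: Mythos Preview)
Your argument is correct. The paper itself just cites Hua's book \cite{AToPN} and records the estimate
\[
\sum_{u=1}^{k-1}\binom{D}{u}(2u)^m \le 2^m k^m D^{k-1},
\]
which is exactly the same covering-by-pairs idea: a bad value set is contained in a union of at most $k-1$ pairs $\{a,a+1\}$, and one then counts choices of pairs times tuples. Your version pads to exactly $k-1$ ordered starting points (yielding $D^{k-1}(2k-2)^m$) rather than summing over the number $u$ of pairs, which is a harmless simplification. What you add beyond the cited argument is the explicit block decomposition and the identification of the maximum well-spaced subset with $\sum_i\lceil n_i/2\rceil$, which makes the covering claim self-contained; this step is correct as you have verified it.
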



\begin{proof} See \cite[Lemma 4.3]{AToPN}. There is however a slight error in the proof. Their argument gives the bound
$$\begin{aligned}
\sum_{u=1}^{k-1} \binom{D}{u}(2u)^m &\le 2^m (k-1)^m D^{k-1} \sum_{u=1}^{k-1} \frac{1}{u!} \\
&\le 2^m k^m e^{-\frac{m}{k}} D^{k-1} (e-1) \\
& \le 2^m k^m D^{k-1}.
\end{aligned}$$
If one works a little bit harder one may also recover the bound claimed in \cite{AToPN}.
\end{proof}

This next lemma is the key to the well-spacing Propositions \ref{prop:initial} and \ref{prop:conditioning}. It is essentially about bounding the number of solutions in a box $\FB^m(N,P)\times \FB^m(N,P)$ by the number of solutions in its sub-boxes, most of which contain a ($k$-dimensional) $R$-well-spaced box with $R$ being of a large size compared to the sub-box itself.

\begin{lem} For $G \ge 1$, $m \ge k+1, k \ge 2$ and $P\ge 2^G$ we have $|f(N,P,\vect{\alpha})|^{2m}$ is bounded by
$$\begin{aligned}
G \Biggl[& \sum_{g= \lceil \log_2(2k) \rceil}^G \frac{2^mL_{g-1}}{m-k} \sum^{(m-k)2^mL_{g-1}} |\FF_{2^{-g}P}^k(\vect{N'},2^{-g}P,\vect{\alpha})|^2|f(N',2^{-g}P,\vect{\alpha})|^{2(m-k)} \\
&+\frac{L_G}{m} \sum^{mL_G} |f(N',2^{-G}P,\vect{\alpha})|^{2m} \Biggr],
\end{aligned}$$
where all the boxes on the right hand side are contained in the box $\FB^{2m}(N,P)$ and
$$\begin{aligned}
L_g &= 2^{m} k^{m} \cdot (2^g)^{k-1}.
\end{aligned}$$
\label{lem:wellspacing}
\end{lem}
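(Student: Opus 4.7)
The plan is to establish an exact decomposition identity for $f(N,P,\vect{\alpha})^m$ by iterated dyadic splitting together with Lemma \ref{lem:notwsp}, and then to derive the claimed upper bound via Cauchy--Schwarz (twice) and AM--GM.

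The starting point is the trivial splitting
$$
f(N,P,\vect{\alpha}) = f(N - \tfrac{P}{4}, \tfrac{P}{2}, \vect{\alpha}) + f(N + \tfrac{P}{4}, \tfrac{P}{2}, \vect{\alpha}),
$$
which, iterated $g$ times, yields
$$
f(N,P,\vect{\alpha})^m = \sum_{0 \le \vect{d} \le 2^g - 1} \prod_{i=1}^m f\bigl(N - \tfrac{P}{2} + (d_i + \tfrac{1}{2}) 2^{-g} P,\; 2^{-g}P,\; \vect{\alpha}\bigr).
$$
The key observation is that the $m$-dimensional sub-box indexed by $\vect{d}$ contains a $k$-dimensional $2^{-g}P$-well-spaced sub-box if and only if the tuple $(d_1,\dots,d_m)$ contains a well-spaced $k$-subtuple in the sense of Lemma \ref{lem:notwsp}.

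I would then prove by induction on $G$ the exact identity
\begin{align*}
f(N,P,\vect{\alpha})^m &= \sum_{g=\lceil \log_2(2k) \rceil}^{G} \sum^{2^m L_{g-1}} \FF_{2^{-g}P}^k(\vect{N'}, 2^{-g}P, \vect{\alpha}) \FF^{m-k}(\vect{N''}, 2^{-g}P, \vect{\alpha}) \\
&\quad + \sum^{L_G} \FF^m(\vect{N'''}, 2^{-G}P, \vect{\alpha}),
\end{align*}
where the last sum runs over those products whose index tuple contains no well-spaced $k$-subtuple. The base case $G = \lceil \log_2(2k) \rceil - 1$ asserts that the first sum is empty: a well-spaced subtuple at scale $g$ would force $d_{i_k} \ge 2(k-1)$ and hence $2^g \ge 2k-1$; since $2^g$ is even for $g \ge 1$ while $2k-1$ is odd, this sharpens to $2^g \ge 2k$, contradicting $g < \log_2(2k)$. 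Lemma \ref{lem:notwsp} with $D = 2^G$ bounds the number of summands in the second sum by $L_G$. For the inductive step, I apply the dyadic splitting once more to each of the $L_G$ surviving terms, producing $2^m L_G$ terms at scale $2^{-(G+1)}P$; those whose refined tuple now contains a well-spaced $k$-subtuple are peeled off into the first sum, and Lemma \ref{lem:notwsp} with $D = 2^{G+1}$ bounds the remaining terms by $L_{G+1}$.

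Taking absolute values and applying Cauchy--Schwarz first to the outer sum (of at most $G - \lceil \log_2(2k) \rceil + 2 \le G$ terms for $k \ge 2$) and then to each inner sum, yields
$$
|f(N,P,\vect{\alpha})|^{2m} \le G \Biggl[\sum_{g=\lceil \log_2(2k) \rceil}^G 2^m L_{g-1} \sum^{2^m L_{g-1}} |\FF_{2^{-g}P}^k|^2 |\FF^{m-k}|^2 + L_G \sum^{L_G} |\FF^m|^2\Biggr].
$$
Finally, the AM--GM inequality $|\FF^r(\vect{N'},M,\vect{\alpha})|^2 \le \frac{1}{r}\sum_{i=1}^r |f(N'_i,M,\vect{\alpha})|^{2r}$, applied with $r = m-k$ and $r = m$, converts the products into single exponential sums and introduces the desired factors $\frac{1}{m-k}$ and $\frac{1}{m}$. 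The main obstacle is the inductive bookkeeping: one must verify that refining a non-well-spaced tuple by one more dyadic level partitions cleanly into $2^m$ new tuples which can again be sorted into the well-spaced/non-well-spaced dichotomy governed by Lemma \ref{lem:notwsp}, and the sharp parity observation in the base case is precisely what permits the lower summation index to be $\lceil \log_2(2k) \rceil$ rather than one larger.
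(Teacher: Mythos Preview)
Your proposal is correct and follows essentially the same route as the paper: the dyadic splitting, the inductive decomposition sorting tuples into well-spaced versus not (with the $2^m L_{g-1}$ and $L_G$ counts coming from Lemma \ref{lem:notwsp}), the parity observation $2^g \ge 2k-1 \Rightarrow 2^g \ge 2k$ fixing the lower summation index, and the final Cauchy--Schwarz/AM--GM passage are all exactly as in the paper. The only cosmetic difference is that the paper formalises the inductive bookkeeping via a predecessor map $\vect{p}(\vect{d}) = (\lfloor d_1/2 \rfloor,\dots,\lfloor d_m/2 \rfloor)$ and starts the induction at $G=0$, whereas you start at $G = \lceil \log_2(2k) \rceil - 1$; the content is identical.
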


\begin{proof} At the heart of the argument lies the equality
\begin{equation}
f(N,P,\vect{\alpha})=f(N-P/4,P/2,\vect{\alpha})+f(N+P/4,P/2,\vect{\alpha}).
\label{eq:split1}
\end{equation}
Iterating this equality shows that we have
\begin{equation*}
f(N,P,\vect{\alpha})=\sum_{d=0}^{2^g-1} f(N-P/2+(d+1/2)2^{-g}P,2^{-g}P,\vect{\alpha})
\end{equation*}
for every $g \in \BN$. This further leads to
\begin{equation}
f(N,P,\vect{\alpha})^m= \sum_{0 \le \vect{d} \le 2^{g}-1} \prod_{i=1}^m f(N-P/2+(d_i+1/2)2^{-g}P,2^{-g}P,\vect{\alpha}).
\label{eq:prodsplit1}
\end{equation}
Now the above box $\FB^m(N-P/2+(\vect{d}+1/2)2^{-g}P,2^{-g}P)$ contains a ($k$-dimensional) $2^{-g}P$-well-spaced box if and only if the tuple $\vect{d}=(d_1,\dots,d_m)$ contains a ($k$-dimensional) well-spaced subtuple.\\

Our plan is to extract the tuples which contain a well-spaced subtuple from \eqref{eq:prodsplit1} before using \eqref{eq:split1} on the remaining summands. For this purpose we are going to use the binary expansion of the $d$'s. Given $\vect{d} \in \BN_0^m$ we define the predecessor of $\vect{d}$ to be $\vect{p}(\vect{d})=(\lfloor d_1/2 \rfloor,\dots,\lfloor d_m/2 \rfloor)$. We also define the set of successors of $\vect{d}$ as $S(\vect{d})=\{\vect{d'}\in \BN_0^m | \vect{p}(\vect{d'})=\vect{d}\}$. Note that if $\vect{p}(\vect{d})$ contains a ($k$-dimensional) well-spaced subtuple then so does $\vect{d}$. We abbreviate `$\vect{d}$ contains a ($k$-dimensional) well-spaced subtuple' to `$\vect{d}$ is good'. We are now able to prove the following identity for $G \in \BN_0$ inductively:
\begin{equation}\begin{aligned}
f(N,P,\vect{\alpha})^m =& \sum_{g=0}^G \sum_{\substack{0 \le \vect{d} \le 2^g-1\\ \vect{d} \text{ is good}\\ \vect{p}(\vect{d}) \text{ is not good}}} \prod_{i=1}^m f(N-P/2+(d_i+1/2)2^{-g}P,2^{-g}P,\vect{\alpha}) \\
&+ \sum_{\substack{0 \le \vect{d} \le 2^{G}-1 \\ \vect{d} \text{ is not good}}} \prod_{i=1}^m f(N-P/2+(d_i+1/2)2^{-G}P,2^{-G}P,\vect{\alpha}).
\label{eq:wellspac}
\end{aligned}\end{equation}
For $G=0$ the right hand side is just $f(N,P,\vect{\alpha})^m$ as the first sum is empty. The induction step follows from the identity
$$\begin{aligned}
\prod_{i=1}^m  f(N-P/2+&(d_i+1/2)2^{-G}P,2^{-G}P,\vect{\alpha}) \\
& = \sum_{\vect{d'} \in S(\vect{d})} \prod_{i=1}^m f(N-P/2+(d_i'+1/2)2^{-(G+1)}P,2^{-(G+1)}P,\vect{\alpha}),
\end{aligned}$$
which is just \eqref{eq:split1} applied to each factor, applied to the latter sum in \eqref{eq:wellspac}; i.e. the $\vect{d}$'s which are not good, and splitting up into good and not good tuples.

Now we note that if $g \le \lceil \log_2(2k) \rceil-1$ we have that all tuples $0 \le \vect{d}\le 2^{g}-1$ are not good, because if there were a good one we would have
$$
D=2^g \ge 1+d_{i_k}=1+\sum_{j=1}^{k-1} (d_{i_{j+1}}-d_{i_j})+d_{i_1} \ge 1+2(k-1).
$$
If $g \ge 1$ we then have $2^g \ge 2k$ as $2k-1$ is odd, leading to a contradiction. And for $g=0$ there are obviously no good tuples. The next thing we note is the number of not good tuples $0 \le \vect{d} \le 2^{g}-1$ is at most
$$
L_g = 2^{m} k^{m} \cdot (2^g)^{k-1},
$$
by Lemma \ref{lem:notwsp}. Moreover we have $|S(\vect{d})|=2^m$ which shows that the set of tuples $0 \le \vect{d} \le 2^{g}-1$ such that $\vect{d}$ is good and $\vect{p}(\vect{d})$ is not good has cardinality at most $2^mL_{g-1}$. Therefore we conclude that the equality \eqref{eq:wellspac} is of the shape
$$\begin{aligned}
f(N,P,\vect{\alpha})^m =& \sum_{g= \lceil \log_2(2k) \rceil}^G \sum^{2^mL_{g-1}} \FF_{2^{-g}P}^k(\vect{N'},2^{-g}P,\vect{\alpha}) \FF^{m-k}(\vect{N''},2^{-g}P,\vect{\alpha}) \\
&+\sum^{L_G}\FF^{m}(\vect{N'''},2^{-G}P,\vect{\alpha}).
\end{aligned}$$
Applying Cauchy-Schwarz twice yields
$$\begin{aligned}
|f(N,P,\vect{\alpha})|^{2m} \le & G \Biggl[\sum_{g= \lceil \log_2(2k) \rceil}^G \left(\sum^{2^mL_{g-1}} |\FF_{2^{-g}P}^k(\vect{N'},2^{-g}P,\vect{\alpha})| |\FF^{m-k}(\vect{N''},2^{-g}P,\vect{\alpha})|\right)^2 \\
&+\left(\sum^{L_G}|\FF^{m}(\vect{N'''},2^{-G}P,\vect{\alpha})| \right)^2 \Biggr ] \\
\le & G \Biggl[\sum_{g= \lceil \log_2(2k) \rceil}^G 2^mL_{g-1} \! \! \sum^{2^mL_{g-1}} |\FF_{2^{-g}P}^k(\vect{N'},2^{-g}P,\vect{\alpha})|^2 |\FF^{m-k}(\vect{N''},2^{-g}P,\vect{\alpha})|^2 \\
&+L_G\sum^{L_G}|\FF^{m}(\vect{N'''},2^{-G}P,\vect{\alpha})|^2 \Biggr ],
\end{aligned}$$
since $G-\lceil \log_2(2k) \rceil+1+1 \le G$. By further using AM-GM in the shape
$$
|\FF^{r}(\vect{N},M,\vect{\alpha})|^2 \le \frac{1}{r} \sum_{i=1}^r |f(N_i,M,\vect{\alpha})|^{2r},
$$
we prove the desired inequality.\end{proof}

\begin{rem} Potentially one could gain more $\log_2(2X)$ savings if one were to allow mixed terms with different $g$'s, but the state of affairs is already complicated enough as it is and so we omit exploring this possibility.
\end{rem}

\section{Core Propositions}
In this section we prove the core propositions which will be used in the final argument, as outlined in Section \ref{sec:outline}. From now on we will also assume that $k \ge 3$.

This first proposition well-spaces a set of variables in order to get the iteration started.

\begin{prop} Let $G \in \BN$ and assume $X^{\theta} \ge 2^{G}$ with $0<\theta \le \frac{1}{k^2}$. Furthermore let $s \ge m \ge k+1$ and $m \le 8k^2$. Then we have that $\llbracket J_{s,k}(X) \rrbracket$ is bounded by the sum:
$$\begin{aligned}
 C' \cdot G &\sum_{g= \lceil \log_2(2k) \rceil}^{G}  \left(2^{g} \right)^{2(k-1)-2(m-k)}\llbracket I_{0,1}^{g}(X) \rrbracket + C'' \cdot G \cdot \left(2^{G}\right)^{2(k-1)-\frac{m}{s}(2s-\frac{1}{2}k(k+1)+\eta)},
\end{aligned}$$
where
$$\begin{aligned}
C' &= 2^{6m-4k+2}k^{2m} \cdot \left(1+\frac{1}{X^{\theta}} \right)^{2(s-m)},\\
C'' &= 2^{2m+1}k^{2m}.
\end{aligned}$$
\label{prop:initial}\end{prop}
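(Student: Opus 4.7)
The idea is to write $|f(X/2,X,\vect{\alpha})|^{2s}=|f(X/2,X,\vect{\alpha})|^{2m}\cdot|f(X/2,X,\vect{\alpha})|^{2(s-m)}$ and apply Lemma~\ref{lem:wellspacing} to the first factor in order to introduce well-spaced sub-boxes; each remaining $f$-factor is then broken up into sub-intervals of length $X^{1-\theta}$ so as to produce the integral $I_{0,1}^{g}(X)$. The hypothesis $X\ge 2^G$ of Lemma~\ref{lem:wellspacing} follows from $X^\theta\ge 2^G$ and $\theta\le 1/k^2\le 1$. Multiplying the resulting inequality by $|f(X/2,X,\vect{\alpha})|^{2(s-m)}$ and integrating over $[0,1[^k$ decomposes $J_{s,k}(X)$ into a sum over $g\in\{\lceil\log_2(2k)\rceil,\dots,G\}$ of main terms plus a single error term from the $L_G$ tail.

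For each main term, I would split at the scale $X^{1-\theta}$ by writing $f(N',2^{-g}X,\vect{\alpha})=\sum_{j=1}^{N_1}f(\xi_{j}^{(1)},X^{1-\theta},\vect{\alpha})$ and $f(X/2,X,\vect{\alpha})=\sum_{j=1}^{N_2}f(\xi_{j}^{(2)},X^{1-\theta},\vect{\alpha})$ with $N_1\le\lceil 2^{-g}X^{\theta}\rceil\le 2\cdot 2^{-g}X^{\theta}$ (valid since $2^{-g}X^{\theta}\ge 2^{-G}X^{\theta}\ge 1$) and $N_2\le\lceil X^{\theta}\rceil\le X^{\theta}(1+X^{-\theta})$. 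Applying Cauchy--Schwarz in the form $|\sum a_j|^{2r}\le N^{2r-1}\sum|a_j|^{2r}$ to each sum, followed by the weighted AM--GM inequality
\begin{equation*}
|a|^{2(m-k)}|b|^{2(s-m)}\le\tfrac{m-k}{s-k}|a|^{2(s-k)}+\tfrac{s-m}{s-k}|b|^{2(s-k)}
\end{equation*}
(valid as $(m-k)+(s-m)=s-k$), reduces the inner integrals to ones of the form $\int|\FF_{2^{-g}X}^{k}|^{2}|f(\xi,X^{1-\theta},\vect{\alpha})|^{2(s-k)}\,d\vect{\alpha}$; Lemma~\ref{lem:inttransinv} then shifts $\xi$ into $[-1/2,1/2]$ so that each integral is bounded by $I_{2^{-g}X}(2^{-g}X,X^{1-\theta})=I_{0,1}^{g}(X)$. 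The lemma's prefactor $(2^{m}L_{g-1})^{2}=2^{4m-2k+2}k^{2m}(2^g)^{2(k-1)}$ combined with $N_1^{2(m-k)}N_2^{2(s-m)}\le 2^{2(m-k)}(2^{-g})^{2(m-k)}X^{2\theta(s-k)}(1+X^{-\theta})^{2(s-m)}$ from the decomposition produces exactly the constant $C'=2^{6m-4k+2}k^{2m}(1+X^{-\theta})^{2(s-m)}$ and the $g$-dependence $(2^g)^{2(k-1)-2(m-k)}$; the leftover $X^{2\theta(s-k)}$ is precisely the factor absorbed when passing from $I_{0,1}^{g}(X)$ to $\llbracket I_{0,1}^{g}(X)\rrbracket$ via \eqref{eq:normalisation}.

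For the error term, H\"older's inequality with exponents $s/m$ and $s/(s-m)$ yields
\begin{equation*}
\int|f(N',2^{-G}X,\vect{\alpha})|^{2m}|f(X/2,X,\vect{\alpha})|^{2(s-m)}\,d\vect{\alpha}\le J_{s,k}(2^{-G}X)^{m/s}J_{s,k}(X)^{(s-m)/s},
\end{equation*}
where Lemma~\ref{lem:inttransinv} identifies $\int|f(N',2^{-G}X,\vect{\alpha})|^{2s}\,d\vect{\alpha}$ with $J_{s,k}(2^{-G}X)$ up to an integer-translation rounding. Inserting the assumed bound \eqref{eq:prevup} for both factors and using $\log_2(2\cdot 2^{-G}X)\le\log_2(2X)$ produces, after normalisation, the factor $(2^G)^{-\frac{m}{s}(2s-\frac{1}{2}k(k+1)+\eta)}$; combined with $L_G^{2}=4^{m}k^{2m}(2^G)^{2(k-1)}$ from the lemma and a small constant correction absorbing the rounding, this yields the claimed $C''=2^{2m+1}k^{2m}$. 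The main obstacle I anticipate is the meticulous bookkeeping of constants and powers of $2^g$ so as to match $C'$, $C''$, and the factor $(1+X^{-\theta})^{2(s-m)}$ exactly; in particular, the critical role of the hypothesis $X^\theta\ge 2^G$ is to guarantee $2^{-g}X^{\theta}\ge 1$ for all $g\le G$, without which the bound $N_1\le 2\cdot 2^{-g}X^{\theta}$ would fail.
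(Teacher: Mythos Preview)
Your proposal is correct and follows essentially the same approach as the paper: apply Lemma~\ref{lem:wellspacing} to $2m$ of the $2s$ copies of $f$, treat the tail term by H\"older with exponents $s/m$, $s/(s-m)$ and translation invariance, and for each well-spaced term split the remaining $f$-factors into sub-intervals of length $X^{1-\theta}$ before using translation invariance to reach $I_{0,1}^g(X)$. The only cosmetic difference is that the paper uses Cauchy--Schwarz in the form $|\sum a_j|^2\le N\sum|a_j|^2$, expands the resulting product of sums into a sum of products, and applies the plain AM--GM $\prod_{i=1}^{s-k}|f_i|^2\le\frac{1}{s-k}\sum_i|f_i|^{2(s-k)}$, whereas you use H\"older in the form $|\sum a_j|^{2r}\le N^{2r-1}\sum|a_j|^{2r}$ followed by the weighted AM--GM on $|a|^{2(m-k)}|b|^{2(s-m)}$; both routes produce the identical factor $N_1^{2(m-k)}N_2^{2(s-m)}$ and hence the same $C'$. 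Your handling of the ``rounding'' in the error term is the right idea: the paper replaces $J_{s,k}(2^{-G}X+1)$ by $J_{s,k}\bigl((1+9/(2^8k^2))\,2^{-G}X\bigr)$ and then uses $m\le 8k^2$ to bound $(1+9/(2^8k^2))^{2m}\le 2$, which is exactly the extra factor of $2$ in $C''=2^{2m+1}k^{2m}$.
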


\begin{proof}
We will use Lemma \ref{lem:wellspacing} for $2m$ factors in $J_{s,k}(X)$. We find
$$
J_{s,k}(X) \le G \left[ \sum_{g = \lceil \log_2(2k) \rceil}^G \frac{2^mL_{g-1}}{m-k} \sum^{(m-k)2^mL_{g-1}} \CW_g + \frac{L_G}{m} \sum^{m L_G} \CN \right],
$$
where
$$\begin{aligned}
\CW_g &= \int_{[0,1[^k} |\FF_{2^{-g}X}^k(\vect{N},2^{-g}X,\vect{\alpha})|^2|f(N,2^{-g}X,\vect{\alpha})|^{2(m-k)} |f(X/2,X,\vect{\alpha})|^{2(s-m)} d \vect{\alpha},\\
\CN &= \int_{[0,1[^k} |f(N,2^{-G}X,\vect{\alpha})|^{2m} |f(X/2,X,\vect{\alpha})|^{2(s-m)} d \vect{\alpha}.
\end{aligned}$$
We refer to the first part as the well-spaced part and second part as the non-well-spaced part. Let us first consider the part which is non-well-spaced. There we have to consider the integral $\CN$. We find
\begin{equation*}\begin{aligned}
\CN \le & \left(\int_{[0,1[^k} |f(N,2^{-G}X,\vect{\alpha})|^{2s} d\vect{\alpha} \right)^{\frac{m}{s}} \left( \int_{[0,1[^k}  |f(X/2,X,\vect{\alpha})|^{2s} d\vect{\alpha} \right)^{\frac{s-m}{s}} \\
\le &J_{s,k}(2^{-G}X+1)^{\frac{m}{s}} \cdot J_{s,k}(X)^{\frac{s-m}{s}} 
\end{aligned}\end{equation*}
by H\"older's inequality and the integer translation invariance. Now we have
$$\begin{aligned}
2^{-G}X+1&=2^{-G}X(1+2^GX^{-1}) \le 2^{-G}X \left(1+\frac{9}{2^8k^2}\right)
\end{aligned}$$
as
$$
2^GX^{-1} \le X^{-1+\theta} \le 2^{\theta^{-1}(-1+\theta)} \le 2^{1-k^2} \le \frac{9}{2^8k^2}.
$$
In conclusion the overall contribution to $J_{s,k}(X)$ from the non-well-spaced part is
\begin{equation}
G \cdot 2^{2m}k^{2m}\left(2^G\right)^{2(k-1)}  \cdot J_{s,k}\left(\left(1+\frac{9}{2^8k^2}\right)2^{-G}X\right)^{\frac{m}{s}} \cdot J_{s,k}(X)^{\frac{s-m}{s}},
\label{eq:notwsppart1b}
\end{equation}
where we recalled $L_G=2^mk^m(2^G)^{k-1}$.

For the $2^{-g}X$-well-spaced ones we have to consider the integral $\CW_g$.
We split up the box $\FB^1(N,2^{-g}X)$, respectively $\FB^1(X/2,X)$, into boxes of size at most $X^{1-\theta}$. Moreover we may assume all the boxes have size $X^{1-\theta}$ as it can only happen, that we account for solutions multiple times. There are at most
$$
\frac{2^{-g}X}{X^{1-\theta}}+1 = 2^{-g} X^{\theta} \cdot \left(1+\frac{2^g}{X^\theta} \right) \le 2^{-g}X^{\theta}  \cdot 2 = T_g', \text{ say},
$$
respectively
$$
\frac{X}{X^{1-\theta}}+1 = X^{\theta} \left(1+\frac{1}{X^{\theta}} \right) = T_g'', \text{ say},
$$
boxes of this kind. Thus we have that $\CW_g$ is bounded by
\begin{equation*}\begin{aligned}
& \int_{[0,1[^k} |\FF_{2^{-g}X}(\vect{N},2^{-g}X,\vect{\alpha})|^2 \left|\sum^{T_g'} f(N,X^{1-\theta},\vect{\alpha})\right|^{2(m-k)} \left|\sum^{T_g''} f(N',X^{1-\theta},\vect{\alpha})\right|^{2(s-m)} d\vect{\alpha}, 
\end{aligned}\end{equation*}
which we immediately bound further by using Cauchy-Schwarz as follows
$$\begin{aligned}
&\left|\sum^{T_g'} f(N,X^{1-\theta},\vect{\alpha})\right|^{2(m-k)} \left|\sum^{T_g''} f(N',X^{1-\theta},\vect{\alpha})\right|^{2(s-m)} \\
& \qquad \qquad \le \left(T_g'\sum^{T_g'} |f(N,X^{1-\theta},\vect{\alpha})|^2\right)^{m-k} \left(T_g''\sum^{T_g''} |f(N',X^{1-\theta},\vect{\alpha})|^2\right)^{s-m}.
\end{aligned}$$
If we define
$$
T_g=T_g'^{(m-k)} \cdot T_g''^{(s-m)} = 2^{m-k} \left( 2^{-g} \right)^{m-k} \left( 1+\frac{1}{X^{\theta}} \right)^{s-m} X^{(s-k)\theta}
$$
then we find after expanding the product of sums into a sum of products that
\begin{equation*}
\CW_g \le T_g \int_{[0,1[^k} |\FF_{2^{-g}X}(\vect{N},2^{-g}X,\vect{\alpha})|^2 \left(\sum^{T_g} \prod_{i=1}^{s-k} |f(N_i'',X^{1-\theta},\vect{\alpha})|^2 \right)  d\vect{\alpha}.
\end{equation*}
Using AM-GM and the integer translation invariance we bound the above further by
\begin{equation*}\begin{aligned}
\CW_g \le & \frac{T_{g}}{s-k} \sum^{(s-k)T_{g}} \int_{[0,1[^k} |\FF_{2^{-g}X}(\vect{N},2^{-g}X,\vect{\alpha})|^2 |f(N'',X^{1-\theta},\vect{\alpha})|^{2(s-k)} d\vect{\alpha} \\
=& \frac{T_{g}}{s-k} \sum^{(s-k)T_{g}} \int_{[0,1[^k} |\FF_{2^{-g}X}(\vect{N'},2^{-g}X,\vect{\alpha})|^2 |f(\xi,X^{1-\theta},\vect{\alpha})|^{2(s-k)} d\vect{\alpha}\\
\le& T_{g}^2 \cdot I_{0,1}^{g}(X).
\end{aligned}\end{equation*}
This last step of translating the variables such that some of the variables are small is an essential prerequisite for the extraction argument \eqref{eq:small} to follow. We find that the well-spaced contribution is at most
\begin{equation}
G \sum_{g= \lceil \log_2(2k) \rceil}^{G} 2^{6m-4k+2}k^{2m}\left(2^g \right)^{2(k-1)-2(m-k)} \left( 1+\frac{1}{X^{\theta}} \right)^{2(s-m)} X^{2(s-k)\theta} \cdot I_{0,1}^{g}(X).
\label{eq:wsppart1}
\end{equation}
In conclusion we have that $J_{s,k}(X)$ is bounded by the sum of \eqref{eq:notwsppart1b} and \eqref{eq:wsppart1}.
We now normalise this inequality to get an inequality for $\llbracket J_{s,k}(X) \rrbracket$. After normalising we easily find that the well-spaced part of the proposition is true. In the non-well-spaced part we collect an additional factor of
$$
\frac{\log_2\left(2\left(1+\frac{9}{4k^2} \right)2^{-G}X \right)^{\delta \frac{m}{s}}}{\log_2(2X)^{\delta \frac{m}{s}}} \left(2^{-G}\right)^{\frac{m}{s}(2s-\frac{1}{2}k(k+1)+\eta)} \left(1+\frac{9}{2^8k^2} \right)^{\frac{m}{s}(2s-\frac{1}{2}k(k+1)+\eta)}.
$$
The fraction of $\log$'s is trivially bounded by 1 and since $\eta \le \frac{1}{2}k(k+1)$ and $m \le 8k^2$ we have
$$
\left(1+\frac{9}{2^8k^2} \right)^{\frac{m}{s}(2s-\frac{1}{2}k(k+1)+\eta)} \le \left(1+\frac{9}{2^8k^2} \right)^{2m} \le e^{\frac{9m}{2^7k^2}} \le 2.
$$
This concludes the proof. 
\end{proof}

This next proposition is almost analogous to the previous one. The observant reader may notice an important difference though. Here the well-spacing step takes place two steps ahead of when it is needed. This is advantageous as it allows for a smaller choice of the parameter $m$ in Lemma \ref{lem:wellspacing}.

\begin{prop} Let $H,a,b \in \BN_0$ with $H \ge 1$ and $b>a\ge 0$. Assume $\theta \in \BR$ satisfies $1 \ge k^2b\theta >0$. Let $X \ge 2^{\theta^{-1}}$ and $X^{kb\theta}\ge 2^H$. Let $g \in \BN$ such that $X^{b\theta}\ge 2^g \ge 2k$. Furthermore let $800 k \ge m\ge k+1$ and $6^6k^2 \log(k) \ge s - k\ge m$, then we have that $\llbracket I_{a,b}^{g}(X) \rrbracket$ is bounded by the sum
$$\begin{aligned}
&C' \cdot H \sum_{h=\lceil \log_2(2k)\rceil }^H\left( 2^h \right)^{2(k-1)} \llbracket K_{a,b;m}^{g,h}(X) \rrbracket\\
+&C'' \cdot H \left(2^g\right)^{-\frac{k}{s}(2s-\frac{1}{2}k(k+1)+\eta)} \left( 2^{H} \right)^{2(k-1)-\frac{m}{s}(2s-\frac{1}{2}k(k+1)+\eta)} \\
& \qquad \qquad \qquad \qquad \qquad \qquad \qquad \qquad \cdot \left( X^{\theta} \right)^{\frac{1}{2}k(k+1)\frac{s-k}{s}(b-a)} \left( X^{-\eta \theta} \right)^{\frac{k}{s}a+\frac{s-k}{s}b} ,
\end{aligned}$$
where
$$\begin{aligned}
C'&= 2^{4m-2k+2} \cdot k^{2m},\\
C'' &= 2^{2m+1} \cdot k^{2m+1}.
\end{aligned}$$
\label{prop:conditioning}
\end{prop}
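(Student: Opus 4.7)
The proof closely parallels that of Proposition \ref{prop:initial}, with the key difference that well-spacing is now applied to the $|f(\xi,P,\vect{\alpha})|$-factors (with $P=X^{1-b\theta}$) rather than to $|f(X/2,X,\vect{\alpha})|$, in order to pre-condition the variables at the smaller scale $P$. Concretely, I would split $|f(\xi,P,\vect{\alpha})|^{2(s-k)}=|f(\xi,P,\vect{\alpha})|^{2m}\cdot|f(\xi,P,\vect{\alpha})|^{2(s-m-k)}$ inside the integrand of $I_{a,b}^g(X)$ and apply Lemma \ref{lem:wellspacing} to the first factor with $G=H$. The hypothesis $X^{kb\theta}\ge 2^H$ together with $(k+1)b\theta\le 1$ (which follows from $k^2 b\theta\le 1$) ensures $P\ge X^{kb\theta}\ge 2^H$, so the lemma applies.

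After integrating the resulting decomposition against the retained factor $|\FF_R^k(\vect{N},M,\vect{\alpha})|^2|f(\xi,P,\vect{\alpha})|^{2(s-m-k)}$, every summand of the well-spaced contribution has integrand
$$
|\FF_R^k(\vect{N},M,\vect{\alpha})|^2|\FF_{2^{-h}P}^k(\vect{N'''},2^{-h}P,\vect{\alpha})|^2|f(N''',2^{-h}P,\vect{\alpha})|^{2(m-k)}|f(\xi,P,\vect{\alpha})|^{2(s-m-k)},
$$
which is precisely the integrand defining $K_{a,b;m}^{g,h}(X)$ with $L=2^{-h}X^{1-b\theta}$. Hence each such integral is bounded by $K_{a,b;m}^{g,h}(X)$, and the combinatorial count $(2^m L_{h-1})^2=2^{4m-2k+2}k^{2m}(2^h)^{2(k-1)}$ supplied by Lemma \ref{lem:wellspacing} produces the first term in the proposition after normalisation (both sides share the same normalising factor from \eqref{eq:normalisation}).

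For the non-well-spaced remainder at scale $L_H=2^{-H}X^{1-b\theta}$, one needs to bound
$$
\int_{[0,1[^k}|\FF_R^k(\vect{N},M,\vect{\alpha})|^2|f(N',L_H,\vect{\alpha})|^{2m}|f(\xi,P,\vect{\alpha})|^{2(s-m-k)}\,d\vect{\alpha}.
$$
I would first estimate $|\FF_R^k|^2\le\frac{1}{k}\sum_{i=1}^k|f(N_i,M,\vect{\alpha})|^{2k}$ by AM--GM, and then apply H\"older's inequality with the three exponents $s/k$, $s/m$, $s/(s-m-k)$, which sum to $1$. By the integer translation invariance each resulting one-variable integral is controlled by $J_{s,k}$ at the appropriate scale, giving the bound $J_{s,k}(M+1)^{k/s}J_{s,k}(L_H+1)^{m/s}J_{s,k}(P+1)^{(s-m-k)/s}$. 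Substituting the inductive estimate $J_{s,k}(Y)\le C\log_2(2Y)^\delta Y^{2s-\frac{1}{2}k(k+1)+\eta}$ and dividing through by the normalisation of $\llbracket I_{a,b}^g(X)\rrbracket$ yields, after a direct exponent computation (writing $E=2s-\frac{1}{2}k(k+1)+\eta$), exactly
$$
(2^g)^{-\frac{k}{s}E}(2^H)^{-\frac{m}{s}E}(X^\theta)^{\frac{k(k+1)(s-k)(b-a)}{2s}}(X^{-\eta\theta})^{(ka+(s-k)b)/s},
$$
multiplied by the combinatorial factor $H\cdot L_H^2=H\cdot 2^{2m}k^{2m}(2^H)^{2(k-1)}$ coming from Lemma \ref{lem:wellspacing} and the $k$-fold AM--GM sum.

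The main technical nuisance is bookkeeping the auxiliary $+1$ terms inside each $J_{s,k}$ together with the accompanying $\log_2$ factors. The hypotheses $X\ge 2^{1/\theta}$, $k^2 b\theta\le 1$, and $2^g\ge 2k$ guarantee that $M,L_H,P\ge X^{1-(k+1)b\theta}$, which is comfortably large; consequently $(1+1/M)^{O(E/s)}$ and the analogous factors at scales $L_H,P$ each contribute at most a bounded multiplicative constant, while $\log_2(2(Y+1))\le 2\log_2(2X)$ for each of the relevant $Y$. Absorbing these constants (along with the $k$ from AM--GM and one further factor of $2$ from the $+1$ terms) recovers the stated $C''=2^{2m+1}k^{2m+1}$, and the bounds on $m$ and $s-k$ are exactly what is needed to keep these absorption constants under control.
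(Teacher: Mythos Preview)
Your proposal is correct and follows essentially the same route as the paper: apply Lemma~\ref{lem:wellspacing} to $2m$ of the $|f(\xi,X^{1-b\theta},\vect{\alpha})|$ factors, recognise the well-spaced summands as $K_{a,b;m}^{g,h}(X)$, and handle the non-well-spaced remainder by H\"older with exponents $(s/k,s/m,s/(s-m-k))$ together with AM--GM and translation invariance to reduce to three copies of $J_{s,k}$. Two small bookkeeping corrections: the extra factor of $k$ in $C''$ does \emph{not} come from AM--GM (the $\tfrac{1}{k}$ there cancels the $k$ summands exactly); it comes from bounding $(1+X^{-(1-b\theta)})^{2(s-k)}\le e^{(s-k)/(2\cdot 6^6 k^2)}\le k$, which is precisely where the hypothesis $s-k\le 6^6 k^2\log k$ is used (the factor $2$ similarly arises from $(1+2^H X^{-(1-b\theta)})^{2m}$ via $m\le 800k$). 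Also, after normalisation the $\log$-ratios $\log_2(2(Y+1))/\log_2(2X)$ are in fact $\le 1$ (since each relevant $Y+1\le X$), not merely $\le 2$; your looser bound would leave an unwanted $2^{\delta}$.
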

\begin{rem} In due course $H$ and $m$ will be chosen in such a way that we have a power saving in $X$ in the non-well-spaced part. 
\end{rem}

\begin{proof}[Proof of Proposition 5.2] Consider an $\vect{N},\xi$ where the maximum of $I_{a,b}^{g}(X)$ occurs. We now apply Lemma \ref{lem:wellspacing} to $2m$ factors of $|f(\xi,X^{1-b\theta},\vect{\alpha})|^{2(s-k)}$. We find
$$
I^g_{a,b}(X) \le H \left[ \sum_{h = \lceil \log_2(2k) \rceil}^H \frac{2^mL_{h-1}}{m-k} \sum^{(m-k)2^mL_{h-1}} K_{a,b;m}^{g,h}(X) + \frac{L_H}{m} \sum^{m L_H} \CN \right],
$$
where $\CN$ is equal to
\begin{multline*}
\int_{[0,1[^k} |\FF^k_{2^{-g}X^{1-a\theta}}(\vect{N},2^{-g}X^{1-a\theta},\vect{\alpha})|^2 |f(N,2^{-H}X^{1-b\theta},\vect{\alpha})|^{2m} |f(\xi,X^{1-b\theta},\vect{\alpha})|^{2(s-m-k)} d\vect{\alpha}.
\end{multline*}
We refer to the first part as the well-spaced part and second part as the non-well-spaced part.

The well-spaced part is then clearly bounded by
\begin{equation}
H \cdot 2^{4m-2k+2} k^{2m} \sum_{h=\lceil \log_2(2k) \rceil}^H  \left(2^{h}\right)^{2(k-1)} K_{a,b;m}^{g,h}(X)
\label{eq:prop2wsp}
\end{equation}
after inserting the bound $L_{h-1}=2^m k^m (2^{h-1})^{k-1}$. For the non-well-spaced part we bound $\CN$ by H\"older's inequality. This gives the bound
$$
\CN \le \CI_1^{\frac{k}{s}}\CI_2^{\frac{m}{s}}\CI_3^{\frac{s-m-k}{s}},
$$
where
$$\begin{aligned}
\CI_1 &= \int_{[0,1[^k} |\FF^k_{2^{-g}X^{1-a\theta}}(\vect{N},2^{-g}X^{1-a\theta},\vect{\alpha})|^{\frac{2s}{k}} d\vect{\alpha},\\
\CI_2 &= \int_{[0,1[^k} |f(N,2^{-H}X^{1-b\theta},\vect{\alpha})|^{2s} d\vect{\alpha},\\
\CI_3 &= \int_{[0,1[^k} |f(\xi,X^{1-b\theta},\vect{\alpha})|^{2s} d\vect{\alpha}.
\end{aligned}$$
Using AM-GM on
$$
|\FF^k_{2^{-g}X^{1-a\theta}}(\vect{N},2^{-g}X^{1-a\theta},\vect{\alpha})|^{\frac{2s}{k}} \le \frac{1}{k}\sum_{i=1}^k |f(N_i,2^{-g}X^{1-a\theta},\vect{\alpha})|^{2s}
$$
and the integer translation invariance we find that
$$\begin{aligned}
\CI_1 &\le J_{s,k}(2^{-g}X^{1-a\theta}+1),\\
\CI_2 &\le J_{s,k}(2^{-H}X^{1-b\theta}+1),\\
\CI_3 &\le J_{s,k}(X^{1-b\theta}+1).
\end{aligned}$$
Now we have
\begin{equation*}
\begin{aligned}
2^{-g}X^{1-a\theta}+1 &= 2^{-g}X^{1-a\theta}\left(1+2^{g}X^{-1+a\theta} \right)\\
& \le 2^{-g}X^{1-a\theta} \left( 1+ \frac{1}{4\cdot 6^6 k}  \right), \\
\end{aligned}\end{equation*}
since 
$$
2^gX^{-1+a\theta} \le X^{-1+2b\theta} \cdot X^{-\theta} \le X^{-(k^2-2)b\theta}\cdot 2^{-1} \le (2k)^{-(k^2-2)}\cdot 2^{-1} \le \frac{1}{4\cdot 6^6 k}.
$$
Furthermore we have
\begin{equation*}\begin{aligned}
2^{-H}X^{1-b\theta}+1 &= 2^{-H}X^{1-b\theta}\left(1+2^H X^{-1+b\theta}\right) \\
& \le 2^{-H}X^{1-b\theta}\left(1+ \frac{1}{2 \cdot 6^4 k}\right) \\
\end{aligned}\end{equation*}
and
\begin{equation*}\begin{aligned}
X^{1-b\theta}+1 &= X^{1-b\theta}\left(1+X^{-1+b\theta}  \right) \\
& \le X^{1-b\theta}\left(1+ \frac{1}{4 \cdot 6^6k^2}  \right),
\end{aligned}\end{equation*}
since
$$
2^H X^{-1+b\theta} \le X^{-1+(k+1)b\theta} \le X^{-(k^2-k-1)b\theta} \le (2k)^{-(k^2-k-1)} \le \frac{1}{2 \cdot 6^4 k}$$
and
$$
X^{-1+b\theta} \le X^{-(k^2-1)b\theta} \le (2k)^{-(k^2-1)} \le \frac{1}{4 \cdot 6^6k^2}.
$$
Thus we have that the non-well-spaced part is bounded by
\begin{equation}\begin{aligned}
H \cdot L_H^2 \cdot & J_{s,k}\left(\left(1+\frac{1}{4 \cdot 6^6k} \right) 2^{-g}X^{1-a\theta}\right)^{\frac{k}{s}} \\ & \cdot J_{s,k}\left(\left(1+\frac{1}{2 \cdot 6^4 k} \right)2^{-H}X^{1-b\theta}\right)^{\frac{m}{s}} J_{s,k}\left(\left(1+\frac{1}{4 \cdot 6^6k^2} \right)X^{1-b\theta}\right)^{\frac{s-m-k}{s}}.
\label{eq:prop2nwsp}
\end{aligned}\end{equation}

We have now that $I_{a,b}^g(X)$ is bounded by the sum of \eqref{eq:prop2wsp} and \eqref{eq:prop2nwsp}. Taking the maximum and normalising we immediately see that the well-spaced part is true. In the non-well-spaced part we are left with
\begin{equation}\begin{aligned}
H &\cdot L_{H}^2 \cdot \frac{\log \left(2 \left(1+\frac{1}{4 \cdot 6^6k} \right) 2^{-g}X^{1-a\theta} \right)^{\frac{k}{s}\delta}}{\log(2X)^{\frac{k}{s}\delta}} \cdot \frac{\log \left(2 \left(1+\frac{1}{2 \cdot 6^4k} \right) 2^{-H}X^{1-b\theta} \right)^{\frac{m}{s}\delta}}{\log(2X)^{\frac{m}{s}\delta}} \\
&\cdot \frac{\log\left(2 \left(1+\frac{1}{4 \cdot 6^6k^2} \right)X^{1-b\theta}\right)^{\frac{s-m-k}{s}\delta}}{\log(2X)^{\frac{s-m-k}{s}\delta}} \cdot \left( 2^g \right)^{-\frac{k}{s}(2s-\frac{1}{2}k(k+1)+\eta)} \cdot \left(2^H \right)^{-\frac{m}{s}(2s-\frac{1}{2}k(k+1)+\eta)} \\
& \cdot \left(X^{a\theta}\right)^{2k-\frac{1}{2}k(k+1)-\frac{k}{s}(2s-\frac{1}{2}k(k+1)+\eta)} \cdot \left(X^{b\theta} \right)^{2(s-k)-\frac{s-k}{s}(2s-\frac{1}{2}k(k+1)+\eta)} \\
& \cdot \left(1+\frac{1}{4 \cdot 6^6k} \right)^{\frac{k}{s}(2s-\frac{1}{2}k(k+1)+\eta)} \cdot \left(1+\frac{1}{2 \cdot 6^4k} \right)^{\frac{m}{s}(2s-\frac{1}{2}k(k+1)+\eta)} \\
& \cdot \left(1+\frac{1}{4 \cdot 6^6k^2} \right)^{\frac{s-m-k}{s}(2s-\frac{1}{2}k(k+1)+\eta)}.
\end{aligned}
\label{eq:step}
\end{equation}
The $\log$'s are trivially bounded by $1$ again and since $\eta \le \frac{1}{2}k(k+1)$ we have
$$\begin{aligned}
\left(1+\frac{1}{4 \cdot 6^6k} \right)^{\frac{k}{s}(2s-\frac{1}{2}k(k+1)+\eta)} & \le \left(1+\frac{1}{4 \cdot 6^6k} \right)^{2k} \le e^{\frac{1}{2 \cdot 6^6}},\\
\left(1+\frac{1}{2 \cdot 6^4k} \right)^{\frac{m}{s}(2s-\frac{1}{2}k(k+1)+\eta)} & \le \left(1+\frac{1}{2 \cdot 6^4k} \right)^{2m} \le e^{\frac{m}{6^4k}} \le 2e^{-\frac{1}{2 \cdot 6^6}},\\
\left(1+\frac{1}{4 \cdot 6^6k^2} \right)^{\frac{s-m-k}{s}(2s-\frac{1}{2}k(k+1)+\eta)} & \le \left(1+\frac{1}{4 \cdot 6^6k^2} \right)^{2(s-k)} \le e^{\frac{s-k}{2 \cdot 6^6 k^2}} \le k.
\end{aligned}$$
Furthermore we have
\begin{multline*}
\left(X^{a\theta}\right)^{2k-\frac{1}{2}k(k+1)-\frac{k}{s}(2s-\frac{1}{2}k(k+1)+\eta)} \cdot \left(X^{b\theta} \right)^{2(s-k)-\frac{s-k}{s}(2s-\frac{1}{2}k(k+1)+\eta)} \\
=  \left( X^{\theta} \right)^{\frac{1}{2}k(k+1)\frac{s-k}{s}(b-a)} \left( X^{-\eta \theta} \right)^{\frac{k}{s}a+\frac{s-k}{s}b}.
\end{multline*}
Inserting all of these equalities and inequalities together with $L_H=2^mk^m(2^H)^{k-1}$ into \eqref{eq:step} we find that the non-well-spaced part of the proposition is true.\end{proof}

The next proposition is where we extract information as we force diagonal behaviour using Lemma \ref{lem:smallboxes}.

\begin{prop} Let $a,b,m \in \BN_0$ with $b>a \ge 0$ and $6^6 k^2\log(k) \ge s-k \ge m \ge k+1$ and $800k \ge m$. Let $\theta \in \BR$ satisfy $1 \ge k^2b\theta >0$ and let $X \ge 2^{\theta^{-1}}$. Furthermore let $g,h \in \BN$ satisfy $X^{b\theta}\ge 2^g \ge 2k$ and $X^{kb\theta}\ge 2^h \ge 2k$, then we have that $\llbracket K_{a,b;m}^{g,h}(X) \rrbracket$ is bounded by
$$\begin{aligned}
C' \cdot \left( 2^g \right)^{-k+\frac{1}{2}k(k-1)} \left( 2^{h} \right)^{-(2s-\frac{1}{2}k(k+1)+\eta)\left(\frac{m}{s}-\frac{k^2}{s(s-k)}\right)} \cdot \llbracket I_{b,kb}^{h}(X) \rrbracket^{\frac{k}{s-k}} \cdot X^{-\frac{s-2k}{s-k} b\theta \cdot \eta},
\end{aligned}$$
where
$$
C'= 2^{\frac{1}{2}k(k+1)+5} e^{\frac{1}{4}k(3k-2)} k^{-\frac{1}{2}k(k-2)+1} \cdot (s-k+k^2)^k.
$$
\label{prop:extraction}
\end{prop}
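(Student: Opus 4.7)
The plan is to implement the Hua--Vinogradov reversibility argument on the inner well-spaced pair $(\vect{x},\vect{y})$. Interpret $K_{a,b;m}^{g,h}(X)$ (for the choice of $\xi,\vect{N},\vect{N'},N''$ attaining the maximum) as the count of integer solutions to
\[
\sum_{i=1}^k (x_i^j-y_i^j) = -\sum_{i=1}^k(w_i^j-z_i^j)-\sum_{i=1}^{m-k}(u_i^j-v_i^j)-\sum_{i=1}^{s-m-k}(p_i^j-q_i^j), \quad j=1,\dots,k,
\]
with the prescribed box conditions. The aim is to use the structure of the right-hand side to bound the number of admissible $(\vect{x},\vect{y})$ and to repackage the integral over the remaining variables as $I_{b,kb}^h(X)^{k/(s-k)}$.

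First partition the $\vect{p},\vect{q}$ box $\FB^1(\xi,X^{1-b\theta})$ into $\sim X^{(k-1)b\theta}$ sub-boxes of size $X^{1-kb\theta}$ by iterating $f(\xi,P,\vect{\alpha})=f(\xi-P/4,P/2,\vect{\alpha})+f(\xi+P/4,P/2,\vect{\alpha})$ as in the proof of Lemma~\ref{lem:wellspacing}, expand $|f(\xi,X^{1-b\theta},\vect{\alpha})|^{2(s-m-k)}$ as a sum of products over sub-box translates, and apply AM-GM together with integer translation invariance (Lemma~\ref{lem:inttransinv}) to centre each sub-box near zero. After this preparation, the $\vect{p},\vect{q}$ contribution to the $j$-th power sum on the right-hand side is at most $(s-m-k)jX^{j-kb\theta}$, while the $\vect{w},\vect{z},\vect{u},\vect{v}$ contributions are each at most $mjLX^{j-1}$.

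For each fixing of all variables other than $\vect{x}$, the right-hand side is then a specific integer tuple of controlled magnitude. Split each coordinate into intervals of length $X^{j-1}$ (producing at most $\sim(s-k+k^2)^k$ interval tuples) and apply Cauchy--Schwarz across the interval choices; for each individual tuple, Lemma~\ref{lem:smallboxes} with $S=1$ bounds the number of admissible $\vect{x}\in\FB^k(\vect{N},M)$ per $\vect{y}$ by the explicit constant $2^{k(k+1)/2}e^{(3k+1)(k-1)/4}k^{-k(k-2)/2}$ times $(X/R)^{k(k-1)/2}$. Summing over $\vect{y}$ and reading off the remainder as an integral over $(\vect{w},\vect{z},\vect{u},\vect{v},\vect{p},\vect{q})$ produces an expression to which H\"older's inequality, with exponents $(s-k)/k$ and $(s-k)/(s-2k)$, applies: the first factor yields $I_{b,kb}^h(X)^{k/(s-k)}$ and the second is estimated through the inductive hypothesis $\llbracket J_{s,k}(X)\rrbracket\le 1$ at the appropriate scale. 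Collecting the normalisation factors then produces the saving $X^{-(s-2k)/(s-k)\cdot b\theta\cdot \eta}$ together with the $(2^g)$ and $(2^h)$ exponents.

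The main obstacle will be the bookkeeping of the multiplicative constants needed to match $C'$ precisely: the Gamma-function bound from Lemma~\ref{lem:smallboxes} has to combine with the interval-splitting factor (which should account for the $(s-k+k^2)^k$ piece), the partitioning overhead from the $\vect{p},\vect{q}$ box, and the H\"older pre-factors in exactly the way indicated. The $(2^g)^{k(k-3)/2}$ exponent should arise as the residual after comparing $(X/R)^{k(k-1)/2}$ with the normalisation $(X^{1-a\theta})^{2k-k(k+1)/2}$, and the $(2^h)$ dependence is expected to come from the interaction between the $L$-scale contributions of $\vect{w},\vect{z},\vect{u},\vect{v}$ and the $X^{1-kb\theta}$-scale contributions of $\vect{p},\vect{q}$ through the H\"older step.
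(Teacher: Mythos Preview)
Your outline has the right spirit but contains two concrete gaps that make the argument fail as written.

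First, the interval count is wrong. You correctly note that the $\vect{w},\vect{z},\vect{u},\vect{v}$ contribution to the $j$-th power sum is at most $mjLX^{j-1}$ with $L=2^{-h}X^{1-b\theta}$. But then splitting into intervals of length $X^{j-1}$ does \emph{not} give $O(s-k+k^2)$ intervals per coordinate: it gives $\sim mjL \sim m\,2^{-h}X^{1-b\theta}$ of them, which is enormous. The paper avoids this entirely. It never shrinks the $\vect{p},\vect{q}$ box and never aims for $S=1$. Instead it uses that all of $\vect{w},\vect{z},\vect{u},\vect{v},\vect{p},\vect{q}$ already lie in $\FB(\xi,X^{1-b\theta})$ with $|\xi|\le \tfrac12$, so the right-hand side has magnitude $\lesssim (s-k)X^{(1-b\theta)j}$; it then splits this range into intervals $V_j$ of length $(s-k)X^{1-kb\theta}X^{j-1}$ (so $\sim X^{\frac12 k(k-1)b\theta}$ tuples in total) and applies Lemma~\ref{lem:smallboxes} with $S=(s-k+k^2)X^{1-kb\theta}$, not $S=1$. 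The factor $(s-k+k^2)^k$ in $C'$ only enters afterwards, when the resulting $S$-sized box for $\vect{x}$ is subdivided into boxes of size $X^{1-kb\theta}$.

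Second, you are missing two structural steps that produce the stated $2^g$ and $2^h$ exponents. The paper also splits the $\vect{y}$-box $\FB^k(\vect{N},2^{-g}X^{1-a\theta})$ into $Z''_g\sim 2^{-gk}X^{k(kb-a)\theta}$ sub-boxes of size $X^{1-kb\theta}$; this is the source of the $(2^g)^{-k}$ that combines with $(X/R)^{\frac12 k(k-1)}=(2^gX^{a\theta})^{\frac12 k(k-1)}$ to give the exponent $-k+\tfrac12 k(k-1)$. Your derivation omits this and would yield only $(2^g)^{\frac12 k(k-1)}$. Likewise, the $(2^h)$-saving does not come from a two-way H\"older with exponents $\tfrac{k}{s-k}$ and $\tfrac{s-2k}{s-k}$: the paper uses a five-way H\"older separating $\CI_1,\CI_2$ (both $\le I_{b,kb}^h(X)$, exponents $\tfrac{k}{2(s-k)}$ each), $\CI_3,\CI_4$ (both $\le J_{s,k}(2^{-h}X^{1-b\theta}+1)$, exponents $\tfrac{(s-2k)k}{s(s-k)}$ and $\tfrac{m-k}{s}$), and $\CI_5\le J_{s,k}(X^{1-b\theta}+1)$ (exponent $\tfrac{s-m-k}{s}$). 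The combined exponent $\tfrac{(s-2k)k}{s(s-k)}+\tfrac{m-k}{s}=\tfrac{m}{s}-\tfrac{k^2}{s(s-k)}$ on the $2^{-h}$-scale pieces is exactly what produces the claimed power of $2^h$. A two-way split cannot isolate this.
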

\begin{proof} Consider an $\vect{N},\vect{N'},N',\xi$, where the maximum of $K_{a,b;m}^{g,h}(X)$ occurs, and its corresponding diophantine equation:
\begin{equation}
\sum_{i=1}^k (x_i^j-y_i^j) = \sum_{i=1}^k (w_i^j-z_i^j) + \sum_{i=1}^{m-k}(u_i^j-v_i^j) + \sum_{i=1}^{s-m-k} (p_i^j-q_i^j) , \quad j=1,\dots,k,
\label{eq:diopheqK}
\end{equation}
where $\vect{x},\vect{y} \in \FB^k(\vect{N},2^{-g}X^{1-a\theta})$, $\vect{w},\vect{z} \in \FB^k(\vect{N'},2^{-h}X^{1-b\theta})$, $\vect{u},\vect{v} \in \FB^{m-k}(N',2^{-h} X^{1-b \theta})$ and $\vect{w},\vect{z},\vect{u},\vect{v},\vect{p},\vect{q} \in \FB^{s-m-k}(\xi,X^{1-b\theta})$. The right hand side is contained in
\begin{equation}
\left] -2(s-k) \left( 0.5001 \right)^j X^{(1-b\theta)j}, 2(s-k) \left( 0.5001 \right)^j X^{(1-b\theta)j} \right [
\label{eq:smallrhs}
\end{equation}
as
$$\begin{aligned}
\xi + \frac{1}{2} X^{1-b\theta} &\le \frac{1}{2} X^{1-b\theta} \left(1+ X^{-1+b\theta} \right) \\
& \le \frac{1}{2} X^{1-b\theta} \left(1+ X^{-(k^2-1)b\theta} \right)\\
&\le \frac{1}{2} X^{1-b\theta} \left(1+ (2k)^{-(k^2-1)} \right) \\
&\le 0.5001 \cdot X^{1-b\theta}.
\end{aligned}$$
The interval \eqref{eq:smallrhs} we split up into intervals $V_j$ of size at most
$$
(s-k)X^{1-kb\theta} \cdot X^{j-1}.
$$
We have at most
$$\begin{aligned}
\prod_{j=1}^{k} \left( 4 \left( 0.5001 \right)^j X^{(k-j)b\theta} +1 \right) &\le \prod_{j=1}^{\infty} \left(1+4 \left( 0.5001 \right)^j \right) \cdot X^{\frac{1}{2}k(k-1)b\theta} \\
& \le 2^4 \cdot X^{\frac{1}{2}k(k-1)b\theta} = Z', \text{ say},
\end{aligned}$$
choices for $\vect{V}=(V_j)_j$ as
$$
\prod_{j=1}^{\infty} \left(1+4 \left( 0.5001 \right)^j \right) \le \prod_{j=1}^{10} \left(1+4 \left( 0.5001 \right)^j \right) \cdot \exp \! \! \left(4\sum_{j=11}^{\infty} \left( 0.5001 \right)^j\right)<14.27 \cdot 1.004 < 2^4.
$$
Furthermore we split up the box $\FB^k(\vect{N},2^{-g} X^{1-a\theta})$ for the $\vect{y}$'s into sub-boxes of the shape $\FB^k(\vect{N''},X^{1-kb\theta})$. We have at most
$$\begin{aligned}
\left( \frac{X^{(kb-a)\theta}}{2^g}+1 \right)^k & = \left(1+ 2^gX^{-(kb-a)\theta} \right)^k\frac{X^{(kb-a)k\theta}}{2^{gk}}\\
& \le e^{\frac{1}{4}} \frac{X^{(kb-a)k\theta}}{2^{gk}} = Z''_g, \text{ say},
\end{aligned}$$
of these, since
$$
2^gX^{-(kb-a)\theta} \le X^{-((k-1)b-a)\theta}\le X^{-(k-2)b\theta}X^{-\theta} \le (2k)^{-(k-2)}\cdot 2^{-1}
$$
and
$$
\left(1+(2k)^{-(k-2)}\cdot 2^{-1}\right)^k \le e^{\frac{1}{4(2k)^{k-3}}} \le e^{\frac{1}{4}}.
$$
Let $\CS(\FB^k(\vect{N''},X^{1-kb\theta}),\vect{V})$ denote the number of solutions $(\vect{x},\vect{y},\vect{w},\vect{z},\vect{u},\vect{v},\vect{p},\vect{q})$ of \eqref{eq:diopheqK} with the additional restriction that
$$
\sum_{i=1}^k(x_i^j-y_i^j) \in V_j \quad (j=1,\dots,k)
$$
and $\vect{y} \in \FB^k(\vect{N''},X^{1-kb\theta})$, so that the total number of solutions to \eqref{eq:diopheqK} is bounded by
\begin{equation}
\sum^{Z'}\sum^{Z''_g} \CS(\FB^k(\vect{N''},X^{1-kb\theta}),\vect{V}).
\label{eq:betwbound}
\end{equation}
Two solutions $(\vect{x},\vect{y},\vect{w},\vect{z},\vect{u},\vect{v},\vect{p},\vect{q}),(\vect{x'},\vect{y'},\vect{w'},\vect{z'},\vect{u'},\vect{v'},\vect{p'},\vect{q'})$ of $\CS(\FB^k(\vect{N'},X^{1-kb\theta}),\vect{V})$ satisfy the inequality
\begin{equation}\begin{aligned}
\left|\sum_{i=1}^k x_i^j - \sum_{i=1}^k x_i'^j \right| & \le \left| \sum_{i=1}^k (x_i^j-y_i^j) - \sum_{i=1}^k (x_i'^j-y_i'^j) \right| + \left| \sum_{i=1}^k y_i^j - \sum_{i=1}^k y_i'^j \right| \\
& \le (s-k)X^{1-kb\theta} \cdot X^{j-1}+jkX^{1-kb\theta} \cdot X^{j-1}\\
& \le (s-k+k^2) X^{1-kb\theta} \cdot X^{j-1}.
\end{aligned}
\label{eq:uint1}
\end{equation}
Thus we are able to apply Lemma \ref{lem:smallboxes} to bound $\CS(\FB^k(\vect{N''},X^{1-kb\theta}),\vect{V})$. We are able to apply it with $S=(s-k+k^2)X^{1-kb\theta}$, $R=2^{-g}X^{1-a\theta}$, $\vect{U}$ the interval in \eqref{eq:uint1}, and $W(\vect{x})$ being the number of solutions $(\vect{x},\vect{y},\vect{w},\vect{z},\vect{u},\vect{v},\vect{p},\vect{q})$ counted by $\CS(\FB^k(\vect{N''},X^{1-kb\theta}),\vect{V})$. We get that \eqref{eq:betwbound} is bounded by
\begin{equation}
\sum^{Z'}\sum^{Z''_g} 2^{\frac{1}{2}k(k+1)} e^{\frac{1}{4}(3k+1)(k-1)} k^{-\frac{1}{2}k(k-2)} \cdot \left(2^g X^{a\theta}  \right)^{\frac{1}{2}k(k-1)} \cdot  \CZ_W(\FB^k(\vect{N'''},\vect{S'}),\vect{U}),
\label{eq:betwbound2}
\end{equation}
with $1 \le \vect{S'} \le (s-k+k^2)X^{1-kb\theta}$. Now $\CZ_W(\FB^k(\vect{N'''},\vect{S'}),\vect{U})$ is just counting the number of solutions of \eqref{eq:diopheqK} with some further restrictions. The two we care about are $\vect{x}\in \FB^k(\vect{N'''},\vect{S'})$ and $\vect{y}\in \FB^k(\vect{N''},X^{1-kb\theta})$. Therefore we have
\begin{multline}
\CZ_W(\FB^k(\vect{N'''},\vect{S'}),\vect{U}) \\ \le  \int_{[0,1[^k} \FF^k_{2^{-g}X^{1-a\theta}}(\vect{N'''},\vect{S'},\vect{\alpha})\FF^k_{2^{-g}X^{1-a\theta}}(\vect{N''},X^{1-kb\theta},-\vect{\alpha}) \cdot \Ff^{\star} d\vect{\alpha},
\label{eq:intermedstep}
\end{multline}
where
$$\begin{aligned}
\Ff^{\star} &= |\FF^k_{2^{-h}X^{1-b\theta}}(\vect{N'},2^{-h} X^{1-b\theta},\vect{\alpha})|^2 |f(N',2^{-h} X^{1-b\theta},\vect{\alpha})|^{2(m-k)}|f(\xi,X^{1-b\theta},\vect{\alpha})|^{2(s-m-k)}.
\end{aligned}$$
We split up $\FB^k(\vect{N'''},\vect{S'})$ further into $(s-k+k^2)^k$ sub-boxes of size at most $X^{1-kb\theta}$, which we may assume to have exactly size $X^{1-kb\theta}$. Thus the integral in \eqref{eq:intermedstep} is further bounded by
$$
\sum^{(s-k+k^2)^k} \int_{[0,1[^k} \FF^k_{2^{-g}X^{1-a\theta}}(\vect{N''''},X^{1-kb\theta},\vect{\alpha})\FF^k_{2^{-g}X^{1-a\theta}}(\vect{N''},X^{1-kb\theta},-\vect{\alpha}) \cdot \Ff^{\star} d\vect{\alpha}.
$$
Using H\"older's inequality we further find 
\begin{equation}
\CZ_W(\FB^k(\vect{N'''},\vect{S'}),\vect{U}) \le \sum^{(s-k+k^2)^k} \CI_1^{\frac{k}{2(s-k)}}\CI_2^{\frac{k}{2(s-k)}}\CI_3^{\frac{(s-2k)k}{s(s-k)}}\CI_4^{\frac{m-k}{s}}\CI_5^{\frac{s-m-k}{s}},
\label{eq:holdering}
\end{equation}
where
$$\begin{aligned}
\CI_1 &= \int_{[0,1[^k} |\FF^k_{2^{-h}X^{1-b\theta}}(\vect{N'},2^{-h} X^{1-b\theta},\vect{\alpha})|^2 |\FF^k_{2^{-g}X^{1-a\theta}}(\vect{N''''},X^{1-kb\theta},\vect{\alpha})|^{\frac{2(s-k)}{k}} d\vect{\alpha}, \\
\CI_2 &= \int_{[0,1[^k} |\FF^k_{2^{-h}X^{1-b\theta}}(\vect{N'},2^{-h} X^{1-b\theta},\vect{\alpha})|^2 |\FF^k_{2^{-g}X^{1-a\theta}}(\vect{N''},X^{1-kb\theta},\vect{\alpha})|^{\frac{2(s-k)}{k}} d\vect{\alpha}, \\
\CI_3 &= \int_{[0,1[^k} |\FF^k_{2^{-h}X^{1-b\theta}}(\vect{N'},2^{-h} X^{1-b\theta},\vect{\alpha})|^{\frac{2s}{k}} d\vect{\alpha}, \\
\CI_4 &= \int_{[0,1[^k} |f(N', 2^{-h} X^{1-b\theta},\vect{\alpha})|^{2s}  d\vect{\alpha}, \\
\CI_5 &= \int_{[0,1[^k} |f(\xi, X^{1-b\theta},\vect{\alpha})|^{2s}  d\vect{\alpha}.
\end{aligned}$$
Using AM-GM on
$$
|\FF^k_{2^{-g}X^{1-a\theta}}(\vect{N''},X^{1-kb\theta},\vect{\alpha})|^{\frac{2(s-k)}{k}} \le \frac{1}{k}\sum_{i=1}^k |f(N''_i,X^{1-kb\theta},\vect{\alpha})|^{2(s-k)}
$$
and the integer translation invariance we find that
$$
\CI_1 \le I_{b,bk}^{h}(X).
$$
Analogously also
$$
\CI_2 \le I_{b,bk}^{h}(X)
$$
holds. Using AM-GM in a similar fashion we find by the integer translation invariance that
$$
\CI_3,\CI_4 \le J_{s,k}\left(2^{-h} X^{1-b\theta} + 1 \right).
$$
Again by the integer translation invariance we find that
$$
\CI_5 \le J_{s,k}\left( X^{1-b\theta}+1 \right).
$$
We have
\begin{equation*}\begin{aligned}
2^{-h} X^{1-b\theta} + 1 &= 2^{-h} X^{1-b\theta} \left(1+ 2^hX^{-1+b\theta} \right)\\
& \le 2^{-h} X^{1-b\theta} \left(1+ \frac{1}{2 \cdot 6^4k} \right),
\end{aligned}\end{equation*}
since
$$
2^hX^{-1+b\theta} \le X^{-1+(k+1)b\theta} \le X^{-(k^2-k-1)b\theta} \le (2k)^{-(k^2-k-1)} \le \frac{1}{2 \cdot 6^4k},
$$
and
\begin{equation*}\begin{aligned}
X^{1-b\theta}+1 &= X^{1-b\theta}\left(1+X^{-1+b\theta} \right) \\
& \le X^{1-b\theta}\left(1+\frac{1}{4 \cdot 6^6k^2} \right),
\end{aligned}\end{equation*}
since
$$
X^{-1+b\theta} \le X^{-(k^2-1)b\theta} \le (2k)^{-(k^2-1)} \le \frac{1}{4 \cdot 6^6k^2}.
$$

By inserting the above analysis into \eqref{eq:holdering} and further into \eqref{eq:betwbound2} we conclude that $K_{a,b;m}^{g,h}(X)$ is bounded by
\begin{multline*}
2^4 X^{\frac{1}{2}k(k-1)b\theta} e^{\frac{1}{4}}2^{-gk} X^{(kb-a)k\theta} 2^{\frac{1}{2}k(k+1)} e^{\frac{1}{4}(3k+1)(k-1)} k^{-\frac{1}{2}k(k-2)} \cdot \left(2^g X^{a\theta}  \right)^{\frac{1}{2}k(k-1)} (s-k+k^2)^k \\ \cdot I_{b,bk}^{h}(X)^{\frac{k}{s-k}} J_{s,k} \left( \! \left(1+\frac{1}{2 \cdot 6^4k} \right)2^{-h}X^{1-b\theta} \right)^{\frac{m}{s}-\frac{k^2}{s(s-k)}} J_{s,k} \left( \! \left(1+\frac{1}{4 \cdot 6^6k^2} \right)X^{1-b\theta} \right)^{\frac{s-m-k}{s}}.
\end{multline*}
Let us apply the normalisations and analyse each parameter separately. The dependence on $X$ is going to be
$$\begin{aligned}
& \frac{\log \left( 2\left(1+\frac{1}{2\cdot 6^4k} \right)2^{-h}X^{1-b\theta} \right)^{\delta\left( \frac{m}{s}-\frac{k^2}{s(s-k)}\right)}}{\log(2X)^{\delta\left( \frac{m}{s}-\frac{k^2}{s(s-k)}\right)}} \cdot \frac{\log \left( 2\left(1+\frac{1}{4\cdot 6^6k^2} \right)X^{1-b\theta} \right)^{\delta\frac{s-m-k}{s}}}{\log(2X)^{\delta\frac{s-m-k}{s}}}\\
&  \cdot \left( X^{a\theta} \right)^{2k-\frac{1}{2}k(k+1)-k+\frac{1}{2}k(k-1)} \left(X^{b\theta} \right)^{2(s-k)+\frac{1}{2}k(k-1)+k^2-\frac{k}{s-k}\left(2k-\frac{1}{2}k(k+1)+2(s-k)k \right)}\\
&  \cdot (X^{b\theta})^{-\frac{s-2k}{s-k}\left(2s-\frac{1}{2}k(k+1)+\eta\right)}.
\end{aligned}$$
The fraction with $\log$'s are bounded by $1$ again. The exponent of $X^{a\theta}$ is $0$ and the exponent of $X^{b\theta}$ reduces to $-\frac{s-2k}{s-k}\eta$ after a short computation. The dependence on $h$ is
$$
\left(2^h\right)^{-\left(2s-\frac{1}{2}k(k+1)+\eta \right)\left( \frac{m}{s}-\frac{k^2}{s(s-k)} \right)}.
$$
The dependence on $g$ is
$$
\left(2^{g}\right)^{-k +\frac{1}{2}k(k-1)}.
$$
And finally the constant is
$$\begin{aligned}
2^4 \cdot& e^{\frac{1}{4}} \cdot 2^{\frac{1}{2}k(k+1)} e^{\frac{1}{4}(3k+1)(k-1)} \cdot k^{-\frac{1}{2}k(k-2)} \cdot (s-k+k^2)^k \\
\cdot& \left(1+\frac{1}{2 \cdot 6^4k} \right)^{\left(2s-\frac{1}{2}k(k+1)+\eta \right)\left(\frac{m}{s}-\frac{k^2}{s(s-k)} \right)} \cdot \left(1+\frac{1}{4 \cdot 6^6k^2} \right)^{\left(2s-\frac{1}{2}k(k+1)+\eta \right)\frac{s-m-k}{s}}.
\end{aligned}$$
Since $\eta \le \frac{1}{2}k(k+1)$ we have
$$
\left(1+\frac{1}{2 \cdot 6^4k} \right)^{\left(2s-\frac{1}{2}k(k+1)+\eta \right)\left(\frac{m}{s}-\frac{k^2}{s(s-k)} \right)} \le e^{\frac{2s}{2\cdot 6^4k}\frac{m}{s}} \le e^{\frac{m}{6^4k}} \le 2
$$
and
$$
\left(1+\frac{1}{4 \cdot 6^6k^2} \right)^{\left(2s-\frac{1}{2}k(k+1)+\eta \right)\frac{s-m-k}{s}} \le e^{\frac{2s}{4 \cdot 6^6k^2} \frac{s-k}{s}} \le e^{\frac{s-k}{2 \cdot 6^6k^2}} \le k.
$$
Thus we find that the constant is bounded by
$$
2^{\frac{1}{2}k(k+1)+5} e^{\frac{1}{4}k(3k-2)} k^{-\frac{1}{2}k(k-2)+1} \cdot (s-k+k^2)^k.
$$
\end{proof}

This last proposition is essentially the same as the previous one, the difference being that the iteration comes to a halt after this step.

\begin{prop} Let $a,b \in \BN_0$ with $b>a$. Let $\theta \in \BR$ satisfy $1 \ge kb\theta > 0$ and let $X \ge 2^{\theta^{-1}}$. Furthermore let $g \in \BN$ satisfy $X^{b\theta}\ge 2^g \ge 2k$and $2 k^2 \log(k) \ge s-k$. Then we have:
$$
\llbracket I_{a,b}^{g}(X) \rrbracket \le C' \cdot \left( 2^g \right)^{-k+\frac{1}{2}k(k-1)} X^{\frac{k^2(k^2-1)}{2s}b\theta} X^{-\eta\frac{s+k^2-k}{s}b\theta},
$$
where
$$
C'=2^{\frac{1}{2}k(k+5)+4} \cdot e^{\frac{1}{4}k(3k-2)} \cdot k^{-\frac{1}{2}k(k-2)+1} \cdot (s-k+k^2)^k.
$$
\label{prop:endI}
\end{prop}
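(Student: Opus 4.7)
The plan is to mimic the proof of Proposition \ref{prop:extraction}, but since this is the terminal step of the iteration there is no need to pre-well-space any further variables. Accordingly the parameter $m$ and the auxiliary $\vect{w},\vect{z}$ factors disappear, the H\"older splitting becomes cleaner, and one may bound the residual integral directly in terms of $J_{s,k}$ rather than in terms of $I_{b,kb}^h(X)$.

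First I would select $\vect{N},\xi$ realising the maximum in $I_{a,b}^g(X)$ and look at the underlying system $\sum_{i=1}^k(x_i^j-y_i^j) = \sum_{i=1}^{s-k}(v_i^j-u_i^j)$, $j=1,\dots,k$. The right-hand side lies in an interval of length of order $(s-k)X^{(1-b\theta)j}$, which I would split into sub-intervals $V_j$ of size $(s-k)X^{1-kb\theta}\cdot X^{j-1}$; exactly as in Proposition \ref{prop:extraction} this gives at most $2^{4}X^{\frac{1}{2}k(k-1)b\theta}$ choices of $\vect{V}$. Next I split the $\vect{y}$-box $\FB^k(\vect{N},2^{-g}X^{1-a\theta})$ into at most $e^{1/4}(2^g)^{-k}X^{(kb-a)k\theta}$ sub-boxes of size $X^{1-kb\theta}$. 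Because $\vect{x}$ and $\vect{y}$ now sit in the same $R$-well-spaced box with $R=2^{-g}X^{1-a\theta}$, their $j$-th power sums differ by at most $(s-k+k^2)X^{1-kb\theta}\cdot X^{j-1}$, so Lemma \ref{lem:smallboxes} forces $\vect{x}$ into a sub-box of side at most $(s-k+k^2)X^{1-kb\theta}$ at the cost of the factor $2^{\frac{1}{2}k(k+1)}e^{\frac{1}{4}(3k+1)(k-1)}k^{-\frac{1}{2}k(k-2)}(2^gX^{a\theta})^{\frac{1}{2}k(k-1)}$. A further subdivision produces $(s-k+k^2)^k$ sub-boxes of size exactly $X^{1-kb\theta}$ for $\vect{x}$.

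The remaining integral then takes the form
$$
\int_{[0,1[^k}\!\FF^k_R(\vect{N'''},X^{1-kb\theta},\vect{\alpha})\,\FF^k_R(\vect{N''},X^{1-kb\theta},-\vect{\alpha})\, |f(\xi,X^{1-b\theta},\vect{\alpha})|^{2(s-k)}\, d\vect{\alpha},
$$
which I would bound via H\"older's inequality with weights $\tfrac{k}{2s},\tfrac{k}{2s},\tfrac{s-k}{s}$. Each $\FF^k$-factor, after AM-GM and the integer translation invariance of Lemma \ref{lem:inttransinv}, contributes $J_{s,k}(X^{1-kb\theta}+1)^{k/(2s)}$, and the remaining factor contributes $J_{s,k}(X^{1-b\theta}+1)^{(s-k)/s}$. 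Substituting the hypothesis \eqref{eq:prevup} and using $X^{-1+b\theta}\le (2k)^{-(k^2-1)}$ together with the analogous bound where $b$ is replaced by $kb$ absorbs the $+1$ into harmless constants of size at most $O(k)$, exactly as in the proof of Proposition \ref{prop:extraction}.

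The main obstacle is the final bookkeeping: verifying that the $X^{a\theta}$-contributions cancel outright, that the $2^g$-contribution collapses to $(2^g)^{-k+\frac{1}{2}k(k-1)}$, and that the $X^{b\theta}$-exponent reduces precisely to $\frac{k^2(k^2-1)}{2s}-\eta\frac{s+k^2-k}{s}$. The latter is the crucial algebraic identity, obtained by combining the $b\theta$-contributions $\frac{k(k-1)}{2}+k^2$ from the two splittings with the normalisation term $+2(s-k)$ and the contribution $-\frac{s+k^2-k}{s}(2s-\frac{1}{2}k(k+1)+\eta)$ coming from the two $J_{s,k}$-bounds, then simplifying via the identity $\frac{k(k+1)(k^2-k)}{2s} = \frac{k^2(k^2-1)}{2s}$. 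Once this is checked, the stated constant $C'$ is simply the product of the constants accrued at each step.
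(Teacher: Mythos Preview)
Your proposal follows the paper's proof essentially step for step: the same splitting of the right-hand side into the intervals $V_j$, the same subdivision of the $\vect{y}$-box, the same application of Lemma \ref{lem:smallboxes}, the same H\"older splitting with weights $\tfrac{k}{2s},\tfrac{k}{2s},\tfrac{s-k}{s}$, and the same algebraic bookkeeping at the end.

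One small inaccuracy: you quote the bound $X^{-1+b\theta}\le(2k)^{-(k^2-1)}$, but that estimate relied on the hypothesis $k^2b\theta\le 1$ of Proposition \ref{prop:extraction}, whereas here only $kb\theta\le 1$ is assumed. Under the present hypotheses one gets only $X^{-1+b\theta}\le X^{-(k-1)b\theta}\le (2k)^{-(k-1)}\le \tfrac{1}{4k^2}$, and the paper bounds $X^{1-kb\theta}+1$ simply by $2X^{1-kb\theta}$. These weaker bounds are still sufficient (this is precisely where the hypothesis $s-k\le 2k^2\log k$ is used, via $(1+\tfrac{1}{4k^2})^{2(s-k)}\le e^{(s-k)/(2k^2)}\le k$), so your argument goes through once the exponents are adjusted.
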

\begin{proof} Consider an $\vect{N},\xi$, where the maximum of $I_{a,b}^{g}(X)$ occurs, and its corresponding diophantine equation:
\begin{equation}
\sum_{i=1}^k (x_i^j-y_i^j) = \sum_{i=1}^{s-k} (p_i^j-q_i^j) , \quad j=1,\dots,k,
\label{eq:diopheqI}
\end{equation}
where $\vect{x},\vect{y} \in \FB^k(\vect{N},2^{-g}X^{1-a\theta})$ and $\vect{p},\vect{q} \in \FB^{s-k}( \xi,X^{1-b\theta})$.

From here we proceed as in the previous proposition, but in this case we need to adjust our definition of $\CS(\FB^k(\vect{N''},X^{1-kb\theta}),\vect{V})$. Let $\CS(\FB^k(\vect{N''},X^{1-kb\theta}),\vect{V})$ denote the number of solutions $(\vect{x},\vect{y},\vect{p},\vect{q})$ of \eqref{eq:diopheqI} with the additional restriction that
$$
\sum_{i=1}^k(x_i^j-y_i^j) \in V_j \quad (j=1,\dots,k)
$$
and $\vect{y} \in \FB^k(\vect{N''},X^{1-kb\theta})$, so that the total number of solutions to \eqref{eq:diopheqI} is bounded by
\begin{equation}
\sum^{Z'}\sum^{Z''_g} \CS(\FB^k(\vect{N''},X^{1-kb\theta}),\vect{V}).
\label{eq:betwboundI}
\end{equation}
Consider now two solutions $(\vect{x},\vect{y},\vect{p},\vect{q}),(\vect{x'},\vect{y'},\vect{p'},\vect{q'})$ of $\CS(\FB^k(\vect{N'},X^{1-kb\theta}),\vect{V})$. In this case we have
\begin{equation}
\begin{aligned}
\left|\sum_{i=1}^k x_i^j - \sum_{i=1}^k x_i'^j \right| & \le \left| \sum_{i=1}^k (x_i^j-y_i^j) - \sum_{i=1}^k (x_i'^j-y_i'^j) \right| + \left| \sum_{i=1}^k y_i^j - \sum_{i=1}^k y_i'^j \right| \\
& \le (s-k)X^{1-kb\theta} \cdot X^{j-1}+jkX^{1-kb\theta} \cdot X^{j-1}\\
& \le (s-k+k^2) X^{1-kb\theta} \cdot X^{j-1}.
\end{aligned}
\label{eq:uint2}
\end{equation}
Thus we are again able to apply Lemma \ref{lem:smallboxes} to bound $\CS(\FB^k(\vect{N''},X^{1-kb\theta}),\vect{V})$. This time with $W(\vect{x})$ being the number of solutions $(\vect{x},\vect{y},\vect{p},\vect{q})$ counted by $\CS(\FB^k(\vect{N''},X^{1-kb\theta}),\vect{V})$ and $\vect{U}$ being the interval in \eqref{eq:uint2}. We arrive at the conclusion that \eqref{eq:betwboundI} is bounded by
\begin{equation}
\sum^{Z'}\sum^{Z_g''}2^{\frac{1}{2}k(k+1)} e^{\frac{1}{4}(3k+1)(k-1)} k^{-\frac{1}{2}k(k-2)} \cdot \left(2^g X^{a\theta}  \right)^{\frac{1}{2}k(k-1)} \cdot \CZ_W(\FB^k(\vect{N'''},\vect{S'}),\vect{U}),
\label{eq:sbound}
\end{equation}
with $1 \le \vect{S'} \le (s-k+k^2)X^{1-kb\theta}$. Now $\CZ_W(\FB^k(\vect{N'''},\vect{S'}),\vect{U})$ is just counting the number of solutions of \eqref{eq:diopheqI} with some further restrictions. The two we care about are $\vect{x}\in \FB^k(\vect{N'''},\vect{S'})$ and $\vect{y}\in \FB^k(\vect{N''},X^{1-kb\theta})$. Thus we arrive at
\begin{multline}
\CZ_W(\FB^k(\vect{N'''},\vect{S'}),\vect{U}) \\ \le \int_{[0,1[^k} \FF^k_{2^{-g}X^{1-a\theta}}(\vect{N'''},\vect{S'},\vect{\alpha})\FF^k_{2^{-g}X^{1-a\theta}}(\vect{N''},X^{1-kb\theta},-\vect{\alpha}) |f(\xi,X^{1-b\theta},\vect{\alpha})|^{2(s-k)} d\vect{\alpha}.
\label{eq:intermedstepI}
\end{multline}
We split up $\FB^k(\vect{N'''},\vect{S'})$ further into $(s-k+k^2)^k$ sub-boxes of size at most $X^{1-kb\theta}$, which we may assume to have exactly size $X^{1-kb\theta}$. Thus the integral in \eqref{eq:intermedstepI} is further bounded by
$$\begin{aligned}
\sum^{(s-k+k^2)^k} \int_{[0,1[^k} \FF^k_{2^{-g}X^{1-a\theta}}(\vect{N''''},X^{1-kb\theta},\vect{\alpha})\FF^k_{2^{-g}X^{1-a\theta}}(\vect{N''}&,X^{1-kb\theta},-\vect{\alpha}) \\
& \cdot |f(\xi,X^{1-b\theta},\vect{\alpha})|^{2(s-k)}  d\vect{\alpha}.
\end{aligned}$$
Further using H\"older's inequality we find
\begin{equation}
\CZ_W(\FB^k(\vect{N'''},\vect{S'}),\vect{U}) \le \sum^{(s-k+k^2)^k}\CI_1^{\frac{k}{2s}}\CI_2^{\frac{k}{2s}}\CI_3^{\frac{s-k}{s}},
\label{eq:Zbound}
\end{equation}
where
$$\begin{aligned}
\CI_1 &= \int_{[0,1[^k} |\FF^k_{2^{-g}X^{1-a\theta}}(\vect{N''},X^{1-kb\theta},\vect{\alpha})|^{\frac{2s}{k}} d\vect{\alpha}, \\
\CI_2 &= \int_{[0,1[^k} |\FF^k_{2^{-g}X^{1-a\theta}}(\vect{N},X^{1-kb\theta},\vect{\alpha})|^{\frac{2s}{k}} d\vect{\alpha}, \\
\CI_3 &= \int_{[0,1[^k} |f(\xi, X^{1-b\theta},\vect{\alpha})|^{2s}  d\vect{\alpha}.
\end{aligned}$$
Using AM-GM on
$$
|\FF^k_{2^{-g}X^{1-a\theta}}(\vect{N''},X^{1-kb\theta},\vect{\alpha})|^{\frac{2s}{k}} \le \frac{1}{k}\sum_{i=1}^k |f(N''_i,X^{1-kb\theta},\vect{\alpha})|^{2s}
$$
and the integer translation invariance we find that
$$
\CI_1 \le J_{s,k}(X^{1-kb\theta}+1)\le J_{s,k}(2X^{1-kb\theta}).
$$
Analogously also
$$
\CI_2 \le J_{s,k}(X^{1-kb\theta}+1) \le J_{s,k}(2X^{1-kb\theta})
$$
holds. And finally by the integer translation invariance we find
$$
\CI_3 \le J_{s,k}\left( X^{1-b\theta}+1 \right).
$$
We have
$$\begin{aligned}
X^{1-b\theta}+1 &=X^{1-b\theta}\left(1+X^{-1+b\theta} \right)\\
& \le X^{1-b\theta}\left(1+\frac{1}{4 k^2} \right),
\end{aligned}$$
since
$$
X^{-1+b\theta} \le X^{-(k-1)b\theta} \le (2k)^{-(k-1)} \le \frac{1}{4 k^2}.
$$
Inserting the above analysis into \eqref{eq:Zbound} and further \eqref{eq:sbound} we conclude that $I_{a,b}^{g}(X)$ is bounded by
\begin{multline*}
2^4  X^{\frac{1}{2}k(k-1)b\theta} e^{\frac{1}{4}}2^{-gk} X^{(kb-a)k\theta}  2^{\frac{1}{2}k(k+1)} e^{\frac{1}{4}(3k+1)(k-1)} k^{-\frac{1}{2}k(k-2)} \cdot \left(2^g X^{a\theta}  \right)^{\frac{1}{2}k(k-1)} \\ \cdot (s-k+k^2)^k
\cdot  J_{s,k}(2X^{1-kb\theta})^{\frac{k}{s}} J_{s,k} \left( \left(1+\frac{1}{4 k^2} \right)X^{1-b\theta} \right)^{\frac{s-k}{s}}.
\end{multline*}
Let us apply the normalisations and analyse each parameter separately. The dependence on $X$ is going to be
$$\begin{aligned}
& \frac{\log \left( 4X^{1-kb\theta} \right)^{\delta\frac{k}{s}}}{\log(2X)^{\delta\frac{k}{s}}} \cdot \frac{\log \left( 2\left(1+\frac{1}{4k^2} \right)X^{1-b\theta} \right)^{\delta\frac{s-k}{s}}}{\log(2X)^{\delta\frac{s-k}{s}}}  \cdot \left( X^{a\theta} \right)^{2k-\frac{1}{2}k(k+1)-k+\frac{1}{2}k(k-1)} \\
& \cdot \left(X^{b\theta} \right)^{2(s-k)+\frac{1}{2}k(k-1)+k^2-\frac{k^2}{s}\left(2s-\frac{1}{2}k(k+1)+\eta \right)-\frac{s-k}{s}\left(2s-\frac{1}{2}k(k+1)+\eta\right)}.
\end{aligned}$$
The fraction with $\log$'s are bounded by $1$ again. The exponent of $X^{a\theta}$ is $0$ and the exponent of $X^{b\theta}$ reduces to $\frac{k^2(k^2-1)}{2s}-\frac{s-k+k^2}{s}\eta$ after a short computation. The dependence on $g$ is $\left(2^{g}\right)^{-k +\frac{1}{2}k(k-1)}$.
And finally the constant is
\begin{multline*}
2^4 \cdot e^{\frac{1}{4}} \cdot 2^{\frac{1}{2}k(k+1)} e^{\frac{1}{4}(3k+1)(k-1)} \cdot k^{-\frac{1}{2}k(k-2)} \cdot (s-k+k^2)^k \\
\cdot 2^{\left(2s-\frac{1}{2}k(k+1)+\eta \right)\frac{k}{s}} \cdot \left(1+\frac{1}{4 k^2} \right)^{\left(2s-\frac{1}{2}k(k+1)+\eta \right)\frac{s-k}{s}}.
\end{multline*}
Since $\eta \le \frac{1}{2}k(k+1)$ we have
$$
2^{\left(2s-\frac{1}{2}k(k+1)+\eta \right)\frac{k}{s}} \le 2^{2k}
$$
and
$$
\left(1+\frac{1}{4 k^2} \right)^{\left(2s-\frac{1}{2}k(k+1)+\eta \right)\frac{s-k}{s}} \le e^{\frac{2s}{4k^2} \frac{s-k}{s}} \le e^{\frac{s-k}{2k^2}} \le k.
$$
Therefore we see that the constant is bounded by
$$
2^4 \cdot 2^{\frac{1}{2}k(k+1)} e^{\frac{1}{4}k(3k-2)} k^{-\frac{1}{2}k(k-2)} \cdot (s-k+k^2)^k \cdot  2^{2k} \cdot k.
$$

\end{proof}

\section{Iterative Process}
In this section we iterate through the Propositions \ref{prop:conditioning} and \ref{prop:extraction} as often as we can. This was already outlined in Section \ref{sec:outline} and we recommend the reader to have a second look at it before advancing, since the argument to follow is essentially the same with the exception that there are more parameters to be analysed and chosen.\\

Let us recall some of our notation of the outline. Let $D \ge 1$ be an integer and set $\theta=k^{-(D+1)}$. Let $(a_0,b_0),(a_1,b_1),(a_2,b_2),\dots,(a_D,b_D)$ denote the sequence
$$
(0,1),(1,k),(k,k^2),\dots,(k^{D-1},k^D).
$$
Furthermore we assume $X\ge 2^{k^{D+1}}$ and $2 \log(k) \ge \lambda = \frac{s-k}{k^2} \ge 1$. We now fix a choice of parameters, which we will justify later on. Set
\begin{equation*}
G_n= \lfloor k^n \theta \log_2(X) \rfloor, \quad  \text{for } n=0,\dots,D
\end{equation*}
and
\begin{equation}
m_n = \begin{cases} \left \lfloor \frac{1}{4}k(k+1) + \frac{4}{3}k - \frac{1}{2} \right \rfloor, & \text{if } n=0,\\
\left \lfloor \frac{5}{3}k \right \rfloor, & \text{if } n=1,\dots,D. \end{cases}
\label{eq:m}
\end{equation}
We remark here that the choice of $G_n$ will ensure that the conditions
\begin{equation}
X^{kb_{n-1}\theta}=X^{b_n\theta} \ge 2^{G_n} \ge 2^{g_n} \ge 2k
\label{eq:Gineq1}
\end{equation}
of the Propositions \ref{prop:initial}, \ref{prop:conditioning}, \ref{prop:extraction} and \ref{prop:endI} are satisfied, where the last inequality comes from the restriction of our well-spaced parameter $g_n$ in Lemma \ref{lem:wellspacing}. We would also like to highlight the inequalities
\begin{equation*}
\frac{1}{4}k(k+1)+\frac{4}{3}k-\frac{1}{2} \ge m_0 \ge \frac{1}{4}k(k+1)+\frac{4}{3}k-\frac{4}{3}
\end{equation*}
and
\begin{equation*}
\frac{5}{3}k \ge m_n \ge \frac{5}{3}k-\frac{2}{3}, \quad \forall n=1,\dots,D,
\end{equation*}
which will be frequently used. The conditions of Proposition \ref{prop:initial} are now clearly met, thus we get

$$\begin{aligned}
\llbracket J_{s,k}(X) \rrbracket \le & C' \cdot G_0 \sum_{g_0= \lceil \log_2(2k) \rceil}^{G_0}  \left(2^{g_0} \right)^{2(k-1)-2(m_0-k)}\llbracket I_{0,1}^{g_0}(X) \rrbracket\\
 & + C'' \cdot G_0  \left(2^{G_0}\right)^{2(k-1)-\frac{m_0}{s}(2s-\frac{1}{2}k(k+1)+\eta)},
\end{aligned}$$
where
$$\begin{aligned}
C' &= 2^{6m_0-4k+2}k^{2m_0} \cdot \left(1+\frac{1}{X^{\theta}} \right)^{2(s-m_0)},\\
C'' &= 2^{2m_0+1}k^{2m_0}.
\end{aligned}$$
Using the inequalities on $m_0$ and $G_0 \le k^{-(D+1)} \log_2(2X)$ we find
$$
\llbracket J_{s,k}(X) \rrbracket \le \log_2(2X) \left( C_0 \sum_{g_0= \lceil \log_2(2k) \rceil}^{G_0}  \left(2^{g_0} \right)^{2(k-1)-2(m_0-k)}\llbracket I_{0,1}^{g_0}(X) \rrbracket + E_0  \right),
$$
where
$$
C_0= 2^{\frac{3}{2}k^2+\frac{11}{2}k-1} k^{\frac{1}{2}k^2+\frac{19}{6}k-2-D} \left(1+\frac{1}{X^{\theta}} \right)^{2(s-m_0)}
$$
and
$$\begin{aligned}
E_0&=2^{\frac{1}{2}k^2+\frac{19}{6}k}k^{\frac{1}{2}k^2+\frac{19}{6}k-2-D} \left( 2^{G_0} \right)^{2(k-1)-\frac{m_0}{s}(2s-\frac{1}{2}k(k+1)+\eta)}.
\end{aligned}$$
We have
$$
\frac{2s-\frac{1}{2}k(k+1)+\eta}{s} \ge \frac{3}{2}+\frac{\eta}{k(k+1)} \Leftrightarrow \left( \frac{1}{2}k(k+1)-\eta \right) \left( \frac{s}{k(k+1)}-1 \right) \ge 0
$$
and therefore we further find
$$\begin{aligned}
E_0& \le 2^{\frac{1}{2}k^2+\frac{19}{6}k}k^{\frac{1}{2}k^2+\frac{19}{6}k-2-D} \left( 2^{G_0} \right)^{2(k-1)-\frac{3}{2}m_0-\frac{m_0}{k(k+1)}\eta} \\
& \le 2^{\frac{1}{2}k^2+\frac{19}{6}k}k^{\frac{1}{2}k^2+\frac{19}{6}k-2-D} \left( 2^{G_0} \right)^{-\frac{3}{8}k(k+1)-\frac{1}{4}\eta} \\
& \le 2^{\frac{1}{2}k^2+\frac{19}{6}k}k^{\frac{1}{2}k^2+\frac{19}{6}k-2-D} \left( \frac{X^{\theta}}{2} \right)^{-\frac{3}{8}k(k+1)-\frac{1}{4}\eta} \\
& \le 2^{k^2+\frac{11}{3}k}k^{\frac{1}{2}k^2+\frac{19}{6}k-2-D} X^{-\eta \theta}.
\end{aligned}$$
In conclusion we have
\begin{equation*}
\llbracket J_{s,k}(X) \rrbracket \le  \log_2(2X) \cdot \Psi_0,
\end{equation*}
where
\begin{equation*}\begin{aligned}
\Psi_0 &=\CC_0 \sum_{g_0= \lceil \log_2(2k) \rceil}^{G_0} \left( 2^{g_0} \right)^{\alpha_0} \llbracket I_{a_0,b_0}^{g_0}(X) \rrbracket + \CC_0^{\dagger} \cdot X^{-\eta \theta\frac{s-2k}{s-k}}
\end{aligned}\end{equation*}
and
$$\begin{aligned}
\CC_0 &= 2^{\frac{3}{2}k^2+\frac{11}{2}k-1} k^{\frac{1}{2}k^2+\frac{19}{6}k-2-D} \left(1+\frac{1}{X^{\theta}} \right)^{2(s-m_0)}, \\
\CC_0^{\dagger} &= 2^{k^2+\frac{11}{3}k}k^{\frac{1}{2}k^2+\frac{19}{6}k-2-D},\\
\alpha_0 &= 2(k-1)-2(m_0-k).
\end{aligned}$$
It is evident that we gave up some saving in the error term $E_0$. This is because this is the maximal amount of power saving we are able get in the error term $E_1$ of the next iteration.\\

We further define
\begin{equation}\begin{aligned}
\Psi_n =& \CC_n  \left(X^{-\eta \theta} \right)^{\frac{s-2k}{s-k} \sum_{i=0}^{n-1}b_i \left( \frac{k}{s-k} \right)^{i} } \left( \sum_{g_n= \lceil \log_2(2k) \rceil}^{G_n} \left( 2^{g_n} \right)^{\alpha_n} \llbracket I_{a_n,b_n}^{g_n}(X) \rrbracket^{\frac{k}{s-k}} \right)^{\left( \frac{k}{s-k} \right)^{n-1}} \\
&+ \CC_n^{\dagger} \cdot X^{-\eta \theta\frac{s-2k}{s-k}},
\label{eq:psidef}
\end{aligned}\end{equation}
for $n=1,\dots,D$, where
$$
\alpha_n= \begin{cases} 2(k-1)-2(m_0-k), & n=0,\\2(k-1)-(2s-\frac{1}{2}k(k+1)+\eta)\left(\frac{m_n}{s}-\frac{k^2}{s(s-k)}\right), & n= 1,\dots,D, \end{cases}
$$
and $\CC_n,\CC_n^{\dagger}$ are some constants, which are going to be defined recursively in \eqref{eq:c0} and \eqref{eq:cn}. We now use Propositions \ref{prop:conditioning} and \ref{prop:extraction} to prove the following proposition.
\begin{prop} With the notation as above and the assumptions mentioned at the beginning of this section we have
\begin{equation}
\Psi_n \le \log_2(2X)^{\left(\frac{k}{s-k}\right)^n} \cdot \Psi_{n+1}, \quad \forall n=0,\dots,D-1.
\label{eq:ineqpsi}
\end{equation}
\label{prop:ineqpsi}
\end{prop}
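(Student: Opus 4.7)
The plan is to fix $n \in \{0,\ldots,D-1\}$ and chain Propositions \ref{prop:conditioning} and \ref{prop:extraction} to bound $\llbracket I_{a_n,b_n}^{g_n}(X) \rrbracket$, then substitute the resulting estimate into the expression defining $\Psi_n$. First I would apply Proposition \ref{prop:conditioning} with parameters $(a,b) = (a_n,b_n)$, $m = m_{n+1}$, $H = G_{n+1}$, and the given $g = g_n$. Its hypotheses hold by our choice $\theta = k^{-(D+1)}$ (so $X \ge 2^{\theta^{-1}}$ and $k^2 b_n \theta \le 1$), by the chain \eqref{eq:Gineq1}, and by the size constraints on $m_{n+1}$ guaranteed by \eqref{eq:m} together with $\lambda \le 2\log k$. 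To each summand $\llbracket K_{a_n,b_n;m_{n+1}}^{g_n,h}(X) \rrbracket$ produced I would then apply Proposition \ref{prop:extraction}, which yields $\llbracket I_{b_n,kb_n}^{h}(X) \rrbracket^{k/(s-k)} = \llbracket I_{a_{n+1},b_{n+1}}^{h}(X) \rrbracket^{k/(s-k)}$; the exponent on $2^h$ in the combined bound is precisely $\alpha_{n+1}$ by construction.

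The outcome takes the shape
\[
\llbracket I_{a_n,b_n}^{g_n}(X) \rrbracket \le A_n (2^{g_n})^{-k + \frac{1}{2}k(k-1)} X^{-\eta\theta b_n \frac{s-2k}{s-k}} \sum_{g_{n+1}} (2^{g_{n+1}})^{\alpha_{n+1}} \llbracket I_{a_{n+1},b_{n+1}}^{g_{n+1}}(X) \rrbracket^{\frac{k}{s-k}} + B_n E_n,
\]
where $A_n, B_n$ are explicit constants and $E_n$ denotes the non-well-spaced error from Proposition \ref{prop:conditioning}. For $n=0$ I would substitute this directly into $\Psi_0$; for $n \ge 1$ I would first raise both sides to the power $k/(s-k) \in (0,1)$ using the subadditivity $(u+v)^{\gamma} \le u^{\gamma} + v^{\gamma}$, sum against $(2^{g_n})^{\alpha_n}$, and then take the resulting expression to the $(k/(s-k))^{n-1}$ power (again invoking subadditivity to separate main from error contributions). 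The $\llbracket I_{a_{n+1},b_{n+1}}^{g_{n+1}}(X) \rrbracket$ then appears raised to $(k/(s-k))^n$ under the outer exponent, and the cumulative $X$-exponent becomes $-\eta\theta \tfrac{s-2k}{s-k} \sum_{i=0}^{n} b_i (k/(s-k))^i$, matching the shape of $\Psi_{n+1}$.

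To close the main term I would verify that the $g_n$-sum collapses to an absolute constant. Its exponent equals $\alpha_0 - k + \tfrac{1}{2}k(k-1)$ for $n=0$ and $\alpha_n + \tfrac{k}{s-k}(-k + \tfrac{1}{2}k(k-1))$ for $n \ge 1$. Using $(2s - \tfrac{1}{2}k(k+1) + \eta)/s \ge \tfrac{3}{2}$, $s-k \ge k^2$ from $\lambda \ge 1$, and the explicit choice \eqref{eq:m} of $m_n$, this exponent is at most $-\tfrac{2(k-1)}{3}$ in the first case and at most $-1$ in the second, so the geometric series converges to an absolute constant depending only on $k$. The factor $G_{n+1} \le \log_2(2X)$ from Proposition \ref{prop:conditioning}, propagated through the two exponentiations by $k/(s-k)$ and $(k/(s-k))^{n-1}$, yields the factor $\log_2(2X)^{(k/(s-k))^n}$ required in \eqref{eq:ineqpsi}.

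The main obstacle will be the bookkeeping of the error term $E_n$: after multiplication by the ambient constants and (for $n \ge 1$) the two exponentiations, it must be absorbed into $\CC_{n+1}^\dagger X^{-\eta\theta(s-2k)/(s-k)}$. The subtlety is that $E_n$ carries a growth factor $(X^\theta)^{\frac{1}{2}k(k+1)\frac{s-k}{s}(b_n - a_n)}$ alongside the savings $(X^{-\eta\theta})^{\frac{k}{s}a_n + \frac{s-k}{s}b_n}$ and $(2^{G_{n+1}})^{2(k-1) - \frac{m_{n+1}}{s}(2s - \frac{1}{2}k(k+1) + \eta)}$. Using $2^{G_{n+1}} \ge \tfrac{1}{2} X^{k^{n+1}\theta}$ together with $m_{n+1} \ge \tfrac{5}{3}k - \tfrac{2}{3}$ shows the last factor contributes a saving of at least $X^{-(k/2 + 1) k^{n+1}\theta}$ which, combined with the $\eta$-saving, comfortably overwhelms the growth factor; the residual power saving then beats $X^{-\eta\theta(s-2k)/(s-k)}$ since $(s-k)b_n/s \ge (s-k)/s > (s-2k)/(s-k)$, the last strict inequality following from $(s-k)^2 - s(s-2k) = k^2 > 0$. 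Defining $\CC_{n+1}$ and $\CC_{n+1}^\dagger$ by the appropriate recursion in the earlier constants completes the inductive step.
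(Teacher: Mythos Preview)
Your overall architecture matches the paper's proof exactly: apply Proposition~\ref{prop:conditioning} with $H=G_{n+1}$, $m=m_{n+1}$, then Proposition~\ref{prop:extraction} to each $K$-term, substitute into $\Psi_n$, and for $n\ge 1$ use the subadditivity $(u+v)^\gamma\le u^\gamma+v^\gamma$ twice. Your verification that the $g_n$-sum has negative exponent is correct (indeed a touch sharper than the paper, which only records $\le -1$ for $n=0$ and $\le -\tfrac13$ for $n\ge 1$).

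There is, however, a genuine gap in your error-term analysis when $n\ge 1$. You argue that the $\eta$-saving inside $E_{n+1}$ is $(X^{-\eta\theta})^{\frac{k}{s}a_n+\frac{s-k}{s}b_n}$ and that $\frac{s-k}{s}b_n\ge\frac{s-k}{s}>\frac{s-2k}{s-k}$, concluding the error beats $X^{-\eta\theta(s-2k)/(s-k)}$. But this is the saving \emph{before} the exponentiation by $(k/(s-k))^n$. After that exponentiation the exponent becomes
\[
\frac{s-k}{s}\,b_n\!\left(\frac{k}{s-k}\right)^{\!n}=\frac{k}{(\lambda k+1)\,\lambda^{\,n-1}},
\]
which for $\lambda>1$ is strictly less than $\frac{s-2k}{s-k}=\frac{\lambda k-1}{\lambda k}$ already at $n=1$ (take $\lambda=2$, $k=3$). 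So the saving you extract from $E_{n+1}$ alone is insufficient.

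The fix, which is what the paper does, is to remember that the bracketed error sits behind the ambient prefactor
\[
\CC_n\bigl(X^{-\eta\theta}\bigr)^{\frac{s-2k}{s-k}\sum_{i=0}^{n-1}b_i(k/(s-k))^i},
\]
whose exponent already contains the $i=0$ term $b_0=1$, giving exactly $X^{-\eta\theta(s-2k)/(s-k)}$. One then only needs the bracketed expression $\bigl[\sum_{g_n}(2^{g_n})^{\alpha_n}(G_{n+1}E_{n+1})^{k/(s-k)}\bigr]^{(k/(s-k))^{n-1}}$ to be bounded by an absolute constant. For that, use your own computation that the $2^{G_{n+1}}$-saving cancels the growth factor in $E_{n+1}$ (so the $X$-exponent inside is $\le 0$), bound $(X^{-\eta\theta})^{\frac{k}{s}a_n+\frac{s-k}{s}b_n}\le 1$ trivially, and check the $g_n$-exponent $\alpha_n-\frac{k}{s-k}\cdot\frac{k}{s}(2s-\tfrac12 k(k+1)+\eta)\le -\tfrac12 k-1$ so the geometric sum converges. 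This is the missing ingredient; once inserted, your outline is complete.
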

\begin{proof} As the cases $n=0$ and $n\ge 1$ are quite similar we will consider them at the same time. Because of the Inequality \eqref{eq:Gineq1} and because $n\le D-1$ implies $1 \ge k^2b_n \theta > 0$ we are able to apply Proposition \ref{prop:conditioning} to $\llbracket I_{a_n,b_n}^{g_n}(X) \rrbracket$ and get
$$
\llbracket I_{a_n,b_n}^{g_n}(X) \rrbracket \le C_{n+1} \cdot G_{n+1} \sum_{g_{n+1}=\lceil \log_2(2k) \rceil}^{G_{n+1}} \left(2^{g_{n+1}}\right)^{2(k-1)} \llbracket K_{a_n,b_n;m_{n+1}}^{g_n,g_{n+1}}(X) \rrbracket+G_{n+1}\cdot E_{n+1},
$$
where
\begin{equation}
C_{n+1} = 2^{4m_{n+1}-2k+2} \cdot k^{2m_{n+1}}
\label{eq:Cdefi}
\end{equation}
and
\begin{equation}\begin{aligned}
E_{n+1}=&2^{2m_{n+1}+1} \cdot k^{2m_{n+1}+1} \cdot \left(2^{g_n}\right)^{-\frac{k}{s}(2s-\frac{1}{2}k(k+1)+\eta)} \\
& \cdot \left(2^{G_{n+1}} \right)^{2(k-1)-\frac{m_{n+1}}{s}(2s-\frac{1}{2}k(k+1)+\eta)} \left( X^{\theta} \right)^{\frac{1}{2}k(k+1)\frac{s-k}{s}(b_n-a_n)} \left(X^{-\eta \theta} \right)^{\frac{k}{s}a_n+\frac{s-k}{s}b_n}.
\end{aligned}
\label{eq:Error}
\end{equation}
In the first sum we further make use of Proposition \ref{prop:extraction}, which gives
$$\begin{aligned}
\llbracket K_{a_n,b_n;m_{n+1}}^{g_n,g_{n+1}}(X) \rrbracket \le & C_{n+1}' \cdot \left( 2^{g_n} \right)^{-k+\frac{1}{2}k(k-1)} \left( 2^{g_{n+1}} \right)^{-(2s-\frac{1}{2}k(k+1)+\eta)\left(\frac{m_{n+1}}{s}-\frac{k^2}{s(s-k)}\right)} \\
& \cdot \llbracket I_{a_{n+1},b_{n+1}}^{g_{n+1}}(X) \rrbracket^{\frac{k}{s-k}} \cdot \left(X^{-\eta \theta} \right)^{\frac{s-2k}{s-k} b_n},
\end{aligned}$$
where
\begin{equation}
C_{n+1}'= 2^{\frac{1}{2}k(k+1)+5} \cdot e^{\frac{1}{4}k(3k-2)} \cdot k^{-\frac{1}{2}k(k-2)+1} \cdot (s-k+k^2)^k.
\label{eq:Cprime}
\end{equation}
In the case of $n=0$ we arrive at the inequality
\begin{equation}\begin{aligned}
\Psi_{0} \le & C_{1}C_1'\CC_0G_1 \left(X^{-\eta \theta}\right)^{\frac{s-2k}{s-k}b_0} \!\! \! \! \sum_{g_0= \lceil \log_2(2k) \rceil}^{G_0} \! \left(2^{g_0}\right)^{\alpha_0-k+\frac{1}{2}k(k-1)} \!\! \! \! \sum_{g_1= \lceil \log_2(2k) \rceil}^{G_1} \! \left(2^{g_1}\right)^{\alpha_1} \llbracket I_{a_{1},b_{1}}^{g_{1}}(X) \rrbracket^{\frac{k}{s-k}} \\
& + \CC_0 \sum_{g_0= \lceil \log_2(2k) \rceil}^{G_0} \left(2^{g_0}\right)^{\alpha_0} G_1E_{1}  + \CC_0^{\dagger} \cdot X^{-\eta \theta \frac{s-2k}{s-k}}.
\label{eq:mediumstep0}
\end{aligned}\end{equation}
For $n\ge 1$ we further use the elementary inequality $(x+y)^r \le x^r+y^r$ twice, which holds for $x,y \ge 0$ and $0\le r \le 1$ and arrive at the inequality
\begin{equation} \begin{aligned}
\Psi_{n} \le & \CC_n \left(X^{-\eta \theta} \right)^{\frac{s-2k}{s-k} \sum_{i=0}^{n-1}b_i \left( \frac{k}{s-k} \right)^{i} } \Biggl[ \left(X^{-\eta \theta}\right)^{\frac{s-2k}{s-k}b_n\left(\frac{k}{s-k}\right)} \! \! \! \! \sum_{g_n=\lceil \log_2(2k) \rceil}^{G_n} \left(2^{g_n} \right)^{\alpha_n+\frac{k}{s-k}\left(-k+\frac{1}{2}k(k-1)\right)} \\
& \cdot \Biggl(C_{n+1}C_{n+1}'G_{n+1}\sum_{g_{n+1} = \lceil \log_2(2k) \rceil}^{G_{n+1}} \left(2^{g_{n+1}} \right)^{\alpha_{n+1}} \llbracket I_{a_{n+1},b_{n+1}}^{g_{n+1}}(X) \rrbracket^{\frac{k}{s-k}} \Biggr)^{\frac{k}{s-k}} \\
& + \sum_{g_n= \lceil \log_2(2k) \rceil}^{G_n} \left(2^{g_n} \right)^{\alpha_n}\left(G_{n+1}E_{n+1}\right)^{\frac{k}{s-k}} \Biggr]^{\left( \frac{k}{s-k} \right)^{n-1}} + \CC_n^{\dagger} X^{-\eta \theta \frac{s-2k}{s-k}} \\
\le & \CC_n \left(X^{-\eta \theta} \right)^{\frac{s-2k}{s-k} \sum_{i=0}^{n}b_i \left( \frac{k}{s-k} \right)^{i} } \Biggl[\sum_{g_n=\lceil \log_2(2k) \rceil}^{G_n} \left(2^{g_n} \right)^{\alpha_n+\frac{k}{s-k}\left(-k+\frac{1}{2}k(k-1)\right)} \\
& \cdot \Biggl(C_{n+1}C_{n+1}'G_{n+1}\sum_{g_{n+1} = \lceil \log_2(2k) \rceil}^{G_{n+1}} \left(2^{g_{n+1}} \right)^{\alpha_{n+1}} \llbracket I_{a_{n+1},b_{n+1}}^{g_{n+1}}(X) \rrbracket^{\frac{k}{s-k}} \Biggr)^{\frac{k}{s-k}} \Biggr]^{\left(\frac{k}{s-k}\right)^{n-1}} \\
& + \CC_n\left(X^{-\eta \theta} \right)^{\frac{s-2k}{s-k}} \Biggl[ \sum_{g_n= \lceil \log_2(2k) \rceil}^{G_n} \left(2^{g_n} \right)^{\alpha_n}\left(G_{n+1}E_{n+1}\right)^{\frac{k}{s-k}} \Biggr]^{\left( \frac{k}{s-k} \right)^{n-1}} + \CC_n^{\dagger} X^{-\eta \theta \frac{s-2k}{s-k}}.
\label{eq:mediumstep1}
\end{aligned}\end{equation}

Next we show that the exponent of $2^{g_{n}}$ is at most $-1$ if $n=0$ and $-\frac{1}{3}$ otherwise. First consider the case $n=0$. There we have
$$
2(k-1)-2(m_0-k)-k+\frac{1}{2}k(k-1) \le -1 \Leftrightarrow \frac{1}{4}k(k+1)+k-\frac{1}{2} \le m_0,
$$
which is true. Now we analyse the case when $n>0$. There we have to bound
$$
2(k-1)-\left(2s-\frac{1}{2}k(k+1)+\eta\right)\left(\frac{m_n}{s}-\frac{k^2}{s(s-k)}\right) +\frac{k}{s-k}\left[-k+\frac{1}{2}k(k-1)\right].
$$
Since $m_n \ge 1 \ge \frac{k^2}{s-k}$ we only make the expression bigger when replacing $2s-\frac{1}{2}k(k+1)+\eta$ by $\frac{3}{2}s$ as the latter is smaller. Thus we are left to bound
$$\begin{aligned}
&2(k-1)-\frac{3}{2}\left(m_n-\frac{k^2}{s-k}\right) + \frac{k}{s-k}\left[-k+\frac{1}{2}k(k-1) \right] \\
= & 2(k-1)-\frac{3}{2}m_n + \frac{k}{s-k} \left[\frac{3}{2}k-k+\frac{1}{2}k(k-1) \right]. 
\end{aligned}$$
Now we have $\frac{3}{2}k-k+\frac{1}{2}k(k-1) \ge 0$ and hence the expression gets bigger when we replace $s$ by $k^2+k$ as the latter is smaller. We are left to deal with
$$
\frac{5}{2}k-2-\frac{3}{2}m_n.
$$
It suffices to have
$$
\frac{5}{2}k-2-\frac{3}{2}m_n \le -\frac{1}{3} \Leftrightarrow \frac{5}{3}k-\frac{10}{9} \le m_n,
$$
which is true. Therefore we conclude
\begin{equation}\begin{aligned}
\sum_{g_0= \lceil \log_2(2k) \rceil}^{G_0} \left( 2^{g_0} \right)^{\alpha_0-k+\frac{1}{2}k(k-1)} &\le \frac{1}{k}, \\
\sum_{g_n= \lceil \log_2(2k) \rceil}^{G_n} \left( 2^{g_n} \right)^{\alpha_n+\frac{k}{s-k}\left(-k+\frac{1}{2}k(k-1)\right)} &\le \frac{4}{k^{\frac{1}{3}}} \quad \forall n \ge 1.
\end{aligned}\label{eq:gexpbound}
\end{equation}

Now we turn our attention to the analysis of the error term; i.e. the terms involving $E_{n+1}$. Let us consider the exponent of $2^{g_n}$ first. For $n=0$ the exponent is
$$\begin{aligned}
2(k-1)-2(m_0-k)-\frac{k}{s}\left(2s-\frac{1}{2}k(k+1)+\eta \right) &\le 2(k-1)-2(m_0-k)-\frac{3}{2}k \\
& \le -\frac{1}{2}k^2-\frac{2}{3}k+\frac{2}{3} \le -1.
\end{aligned}$$
Thus we have
\begin{equation}
\sum_{g_0= \lceil \log_{2}(2k) \rceil}^{G_0} \left(2^{g_0}\right)^{\alpha_0-\frac{k}{s}\left(2s-\frac{1}{2}k(k+1)+\eta\right)} \le 2 (2k)^{-\frac{1}{2}k^2-\frac{2}{3}k+\frac{2}{3}}.
\label{eq:errorgsum0}
\end{equation}
For $n \ge 1$ the exponent is
$$\begin{aligned}
2(k-1)-\left(2s-\frac{1}{2}k(k+1)+\eta\right)&\left(\frac{m_n}{s}-\frac{k^2}{s(s-k)}\right)-\frac{k}{s-k} \frac{k}{s}\left(2s-\frac{1}{2}k(k+1)+\eta \right) \\
&= 2(k-1)-\frac{m_n}{s}\left(2s-\frac{1}{2}k(k+1)+\eta\right) \\
&\le 2(k-1)-\frac{3}{2}m_n \\
&\le -\frac{1}{2}k-1.
\end{aligned}$$
Thus we get for $n \ge 1$
\begin{equation}
\sum_{g_n= \lceil \log_{2}(2k) \rceil}^{G_n} \left(2^{g_n}\right)^{\alpha_n-\frac{k}{s-k}\frac{k}{s}\left(2s-\frac{1}{2}k(k+1)+\eta\right)} \le 2 (2k)^{-\frac{1}{2}k-1}.
\label{eq:errorgsum}
\end{equation}

Now we consider the power of $X$ in the error term $E_{n+1}$; i.e. we are having a detailed look at
$$
\left(2^{G_{n+1}} \right)^{2(k-1)-\frac{m_{n+1}}{s}(2s-\frac{1}{2}k(k+1)+\eta)} \left( X^{\theta} \right)^{\frac{1}{2}k(k+1)\frac{s-k}{s}(b_n-a_n)} \left(X^{-\eta \theta} \right)^{\frac{k}{s}a_n+\frac{s-k}{s}b_n}.
$$
For $n=0$ we bound
$$
\left( X^{-\eta \theta} \right)^{\frac{k}{s}a_0+\frac{s-k}{s}b_0} \le X^{-\eta \theta \frac{s-2k}{s-k}}
$$
and for $n \ge 1$ we bound trivially
$$
\left( X^{-\eta \theta} \right)^{\frac{k}{s}a_n+\frac{s-k}{s}b_n} \le 1.
$$
For the rest we use the inequality $G_{n+1} \ge k^{n+1} \theta \log_2(X)-1$ and find
$$\begin{aligned}
& \left(2^{G_{n+1}} \right)^{2(k-1)-\frac{m_{n+1}}{s}(2s-\frac{1}{2}k(k+1)+\eta)} \left( X^{\theta} \right)^{\frac{1}{2}k(k+1)\frac{s-k}{s}(b_n-a_n)} \\
& \qquad \qquad \le  \left( \frac{X^{\theta k^{n+1}}}{2} \right)^{2(k-1)-\frac{3}{2}m_{n+1}} \left( X^{\theta k^{n+1}} \right)^{\frac{1}{2}(k+1)} \\
& \qquad \qquad \le  2^{\frac{1}{2}k+1} \left(X^{\theta k^{n+1}} \right)^{-\frac{1}{2}} \\
& \qquad \qquad \le  2^{\frac{1}{2}k+1} 2^{-\frac{1}{2}k^{n+1}} \\
& \qquad \qquad \le  \begin{cases}2, & n=0, \\ 1, & n\ge 1. \end{cases}
\end{aligned}$$
The latter seems inefficient, but one has to consider that the $(\lambda k)^n$-th root will be taken of it in due course. Hence we have
\begin{equation}\begin{aligned}
\left(2^{G_{n+1}} \right)^{2(k-1)-\frac{m_{n+1}}{s}(2s-\frac{1}{2}k(k+1)+\eta)} & \left( X^{\theta} \right)^{\frac{1}{2}k(k+1)\frac{s-k}{s}(b_n-a_n)} \left(X^{-\eta \theta} \right)^{\frac{k}{s}a_n+\frac{s-k}{s}b_n} \\
&\le \begin{cases}2 X^{-\eta\theta \frac{s-2k}{s-k}}, & n=0, \\ 1, & n\ge 1. \end{cases}
\label{eq:powerX}
\end{aligned}\end{equation}

Lastly we have
\begin{equation}
G_{n+1} \le k^{n-D} \log_2(2X) \le \begin{cases} k^{-D} \log_2(2X), & n=0, \\ k^{-1} \log_2(2X), & n \ge 1. \end{cases}
\label{eq:Gineq}
\end{equation}
By collecting all of the previous analysis we have proven \eqref{eq:ineqpsi}. We go through this one step at a time. For $n=0$ we combine \eqref{eq:mediumstep0} with \eqref{eq:gexpbound} and \eqref{eq:Gineq}; this gives us the main term and $\CC_1$ as in \eqref{eq:c0}. For the error term we combine \eqref{eq:mediumstep0} with \eqref{eq:Error}, \eqref{eq:errorgsum0}, \eqref{eq:powerX} and \eqref{eq:Gineq}. Which gives us $\CC_1^{\dagger}$ as follows:
\begin{equation}
\begin{aligned}
\CC_{1} &= \CC_0 \cdot C_1C_1' \cdot k^{-D} \cdot k^{-1}, \\
\CC_1^{\dagger} &= \CC_0^{\dagger}+  \CC_0 \cdot \! 2^{2m_{1}+1} \cdot \! k^{2m_{1}+1} \cdot \!  2(2k)^{-\frac{1}{2}k^2-\frac{2}{3}k+\frac{2}{3}}\cdot \! k^{-D}\cdot \! 2\\
& \le \CC_0^{\dagger}+ \CC_0 \cdot \! 2^{\frac{10}{3}k+1} \cdot \! k^{\frac{10}{3}k+1} \cdot \!  2(2k)^{-\frac{1}{2}k^2-\frac{2}{3}k+\frac{2}{3}} \cdot \! k^{-D} \cdot \! 2 \\
& \le \CC_0^{\dagger}+ \CC_0 \cdot \!  2^{-\frac{1}{2}k^2+\frac{8}{3}k+\frac{11}{3}}k^{-\frac{1}{2}k^2+\frac{8}{3}k+\frac{5}{3}-D}.
\end{aligned}
\label{eq:c0}
\end{equation}
For $n=1,\dots,D-1$ we combine \eqref{eq:mediumstep1} with \eqref{eq:gexpbound} and \eqref{eq:Gineq}; this gives us the main term with $\CC_{n+1}$ as in \eqref{eq:cn}. For the error term we combine \eqref{eq:mediumstep1} with \eqref{eq:Error}, \eqref{eq:errorgsum}, \eqref{eq:powerX} and \eqref{eq:Gineq}. Which gives us $\CC_{n+1}^{\dagger}$ as follows:
\begin{equation}
\begin{aligned}
\CC_{n+1} &= \CC_n \cdot \left(\frac{4}{k^{\frac{1}{3}}} \right)^{\left(\frac{k}{s-k}\right)^{n-1}} \left(C_{n+1}C_{n+1}' \cdot k^{-1} \right)^{\left(\frac{k}{s-k}\right)^n}, \\
\CC_{n+1}^{\dagger} &= \CC_n^{\dagger}+\CC_n \left( 2(2k)^{-\frac{1}{2}k-1} \right)^{\left(\frac{k}{s-k} \right)^{n-1}} \left( 2^{2m_{n+1}+1} k^{2m_{n+1}+1} k^{-1} \right)^{\left(\frac{k}{s-k} \right)^{n}}.
\end{aligned}
\label{eq:cn}
\end{equation}
We bound $\CC^{\dagger}_{n+1}$ further by
\begin{equation}\begin{aligned}
\CC_{n+1}^{\dagger} &\le \CC_{n}^{\dagger} + \CC_n \left( \left( 2(2k)^{-\frac{1}{2}k-1} \right)^k \cdot 2^{2m_{n+1}+1} k^{2m_{n+1}+1} k^{-1} \right)^{\left(  \frac{k}{s-k}\right)^n}\\
&\le  \CC_{n}^{\dagger} + \CC_n \left(  2^{-\frac{1}{2}k^2+\frac{10}{3}k+1}   k^{-\frac{1}{2}k^2+\frac{7}{3}k}  \right)^{\left(  \frac{k}{s-k}\right)^n}.
\label{eq:cndagger}
\end{aligned}\end{equation}
\end{proof}

It remains to estimate $\Psi_{D}$. This is done with the help of Proposition \ref{prop:endI} and yields the following proposition.

\begin{prop} With the assumptions as in Proposition \ref{prop:ineqpsi} we have
$$
\Psi_{D} \le \CC_{D+1} X^{\frac{k^2(k^2-1)}{2s} b_D \left(\frac{k}{s-k} \right)^D \theta-\eta\frac{s-2k}{s-k} \sum_{i=0}^{D}b_i \left( \frac{k}{s-k} \right)^{i} \theta } + \CC_D^{\dagger} \cdot X^{-\eta \theta\frac{s-2k}{s-k}},
$$
where
$$
\CC_{D+1} = \CC_D \cdot \left( \frac{4}{k^{\frac{1}{3}}} \right)^{\left(\frac{k}{s-k} \right)^{D-1}} {C_{D+1}'}^{\left( \frac{k}{s-k} \right)^D}
$$
and
$$\begin{aligned}
C_{D+1}' &= 2^{\frac{1}{2}k(k+5)+4} \cdot e^{\frac{1}{4}k(3k-2)} \cdot k^{-\frac{1}{2}k(k-2)+1} \cdot (s-k+k^2)^k \\
&= C_{D}' \cdot 2^{2k} .
\end{aligned}$$
\label{prop:lastpsi}
\end{prop}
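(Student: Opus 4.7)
The plan is to apply Proposition \ref{prop:endI} to the innermost factor $\llbracket I_{a_D,b_D}^{g_D}(X)\rrbracket$ appearing in the definition \eqref{eq:psidef} of $\Psi_D$, and then to recycle the exponent bookkeeping already carried out in the proof of Proposition \ref{prop:ineqpsi}. First I would check that the hypotheses of Proposition \ref{prop:endI} are met: since $\theta=k^{-(D+1)}$ and $b_D=k^D$ we have $kb_D\theta=1$, while $X^{b_D\theta}\ge 2^{G_D}\ge 2^{g_D}\ge 2k$ by \eqref{eq:Gineq1}, and the bound $2k^2\log(k)\ge s-k$ follows from $\lambda\le 2\log(k)$. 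Proposition \ref{prop:endI} then yields
$$
\llbracket I_{a_D,b_D}^{g_D}(X)\rrbracket \le C_{D+1}' \cdot (2^{g_D})^{-k+\frac{1}{2}k(k-1)} \cdot X^{\frac{k^2(k^2-1)}{2s}b_D\theta} \cdot X^{-\eta\frac{s+k^2-k}{s}b_D\theta},
$$
with $C_{D+1}'$ as stated, and indeed one sees directly from the formulas of Propositions \ref{prop:extraction} and \ref{prop:endI} that $C_{D+1}'=C_D'\cdot 2^{2k-1}\le C_D'\cdot 2^{2k}$.

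Next I would raise this bound to the $\frac{k}{s-k}$-th power and substitute it into \eqref{eq:psidef}. The $g_D$-sum becomes
$$
\sum_{g_D=\lceil\log_2(2k)\rceil}^{G_D} (2^{g_D})^{\alpha_D+\frac{k}{s-k}(-k+\frac{1}{2}k(k-1))},
$$
which was already shown in \eqref{eq:gexpbound} to be at most $4/k^{1/3}$ for every index $\ge 1$, in particular for $n=D\ge 1$. Taking the outer $(\frac{k}{s-k})^{D-1}$-th power then contributes the factor $(4/k^{1/3})^{(\frac{k}{s-k})^{D-1}}$ to the constant and $(C_{D+1}')^{(\frac{k}{s-k})^{D}}$ from the constant $C_{D+1}'$; these are exactly the additional factors that turn $\CC_D$ into the claimed $\CC_{D+1}$.

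For the $X$-exponent I would combine the prefactor $X^{-\eta\theta\frac{s-2k}{s-k}\sum_{i=0}^{D-1}b_i(\frac{k}{s-k})^i}$ already present in $\Psi_D$ with the new contribution
$$
X^{\frac{k^2(k^2-1)}{2s}b_D(\frac{k}{s-k})^D\theta - \eta\frac{s+k^2-k}{s}b_D(\frac{k}{s-k})^D\theta}.
$$
Using the elementary inequality $\frac{s+k^2-k}{s}\ge \frac{s-2k}{s-k}$ (verified by clearing denominators, where the numerator of the difference reduces to $k^2(s-k+1)>0$ since $s\ge k(k+1)$), together with $X\ge 1$, I replace $-\eta\frac{s+k^2-k}{s}$ by the smaller $-\eta\frac{s-2k}{s-k}$, which allows the new contribution to be absorbed into the sum, extending its range from $i=0,\dots,D-1$ to $i=0,\dots,D$. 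This produces exactly the exponent $\frac{k^2(k^2-1)}{2s}b_D(\frac{k}{s-k})^D\theta-\eta\frac{s-2k}{s-k}\sum_{i=0}^D b_i(\frac{k}{s-k})^i\theta$ appearing in the statement.

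Finally, the additive error term $\CC_D^{\dagger}\cdot X^{-\eta\theta\frac{s-2k}{s-k}}$ in $\Psi_D$ passes through the argument unchanged, giving the second summand in the bound. The main obstacle, such as it is, is purely the careful tracking of exponents; no fresh analytic input beyond Propositions \ref{prop:endI} and the inequalities \eqref{eq:gexpbound} is needed.
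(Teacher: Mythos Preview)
Your proof is correct and follows the same approach as the paper, which simply states that the result follows from Proposition~\ref{prop:endI} combined with \eqref{eq:gexpbound} applied to \eqref{eq:psidef}. You have filled in the details faithfully: verifying the hypotheses of Proposition~\ref{prop:endI} (in particular $kb_D\theta=1$), applying the $g_D$-sum bound from \eqref{eq:gexpbound}, and using the inequality $\tfrac{s+k^2-k}{s}\ge \tfrac{s-2k}{s-k}$ to merge the exponent into the claimed sum. Your observation that the exact ratio $C_{D+1}'/C_D'$ equals $2^{2k-1}$ rather than $2^{2k}$ is more precise than the paper's stated equality, but this only strengthens the bound and does not affect anything downstream.
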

\begin{proof} The proof follows from Proposition \ref{prop:endI} combined with \eqref{eq:gexpbound} applied to \eqref{eq:psidef}.
\end{proof}

We are left with estimating the constants. For $n = 1,\dots,D$ we have from \eqref{eq:Cdefi}
$$\begin{aligned}
C_n & = 2^{4m_{n}-2k+2} \cdot k^{2m_{n}} \le 2^{\frac{14}{3}k+2} \cdot k^{\frac{10}{3}k}
\end{aligned}$$
and from \eqref{eq:Cprime}
$$\begin{aligned}
C_n' &= 2^{\frac{1}{2}k(k+1)+5} \cdot e^{\frac{1}{4}k(3k-2)} \cdot k^{-\frac{1}{2}k(k-2)+1} \cdot (s-k+k^2)^k.
\end{aligned}$$
Inserting this into Proposition \ref{prop:ineqpsi} using the Definition \eqref{eq:cn} we get\footnote{Here and throughout this section $\sum_{i=0}^{-l}$ denotes the empty sum for any $l > 0$ and equals $0$.}
\begin{equation}\begin{aligned}
\CC_n \le& 2^{\frac{3}{2}k^2+\frac{11}{2}k-1}k^{\frac{1}{2}k^2+\frac{19}{6}k-2-2D} \left(1+\frac{1}{X^{\theta}} \right)^{2(s-m_0)} \cdot \left( \frac{4}{k^{\frac{1}{3}}} \right)^{\sum_{i=0}^{n-2} \left(\frac{k}{s-k} \right)^{i}} \\
&\cdot \left( 2^{\frac{1}{2}k^2+\frac{31}{6}k+7}e^{\frac{3}{4}k^2-\frac{1}{2}k}k^{-\frac{1}{2}k^2+\frac{19}{3}k}(\lambda+1)^k \right)^{\sum_{i=0}^{n-1} \left(\frac{k}{s-k} \right)^{i}} 
\label{eq:cbound}
\end{aligned}\end{equation}
for $n=1,\dots,D$. We further have
$$
2^{2k} \le 2^{\frac{14}{3}k+2} \cdot k^{\frac{10}{2}k} \cdot k^{-1}.
$$
Using these two inequalities with Proposition \ref{prop:lastpsi} we get
\begin{equation}\begin{aligned}
\CC_{D+1} \le& 2^{\frac{3}{2}k^2+\frac{11}{2}k-1}k^{\frac{1}{2}k^2+\frac{19}{6}k-2-2D} \left(1+\frac{1}{X^{\theta}} \right)^{2(s-m_0)} \cdot \left( \frac{4}{k^{\frac{1}{3}}} \right)^{\sum_{i=0}^{D-1} \left(\frac{k}{s-k} \right)^{i}} \\
&\cdot \left( 2^{\frac{1}{2}k^2+\frac{31}{6}k+7}e^{\frac{3}{4}k^2-\frac{1}{2}k}k^{-\frac{1}{2}k^2+\frac{19}{3}k}(\lambda+1)^k \right)^{\sum_{i=0}^{D} \left(\frac{k}{s-k} \right)^{i}}.
\label{eq:cboundlast}
\end{aligned}\end{equation}

We now turn our attention to bounding $\CC^{\dagger}_n$. We continue the estimation \eqref{eq:c0} for $\CC_1^{\dagger}$:
\begin{equation}\begin{aligned}
\CC^{\dagger}_1 \le & 
2^{k^2+\frac{11}{3}k}k^{\frac{1}{2}k^2+\frac{19}{6}k-2-D} + 2^{\frac{3}{2}k^2+\frac{11}{2}k-1}k^{\frac{1}{2}k^2+\frac{19}{6}k-2-D} \left(1+\frac{1}{X^{\theta}} \right)^{2(s-m_0)} \\
& \cdot 2^{-\frac{1}{2}k^2+\frac{8}{3}k+\frac{11}{3}}k^{-\frac{1}{2}k^2+\frac{8}{3}k+\frac{5}{3}-D} \\
\le & 2^{\frac{3}{2}k^2+\frac{11}{2}k-1}k^{\frac{1}{2}k^2+\frac{19}{6}k-2-D} \left(1+\frac{1}{X^{\theta}} \right)^{2(s-m_0)} \\
& \cdot \left(2^{-\frac{1}{2}k^2-\frac{11}{6}k+1}+2^{-\frac{1}{2}k^2+\frac{8}{3}k+\frac{11}{3}}k^{-\frac{1}{2}k^2+\frac{8}{3}k+\frac{5}{3}-D} \right).
\label{eq:Cdag1}
\end{aligned}\end{equation}
Using induction on \eqref{eq:cndagger} with \eqref{eq:cbound} and \eqref{eq:Cdag1} as base we further find
\begin{equation}\begin{aligned}
\CC^{\dagger}_n \le& 2^{\frac{3}{2}k^2+\frac{11}{2}k-1}k^{\frac{1}{2}k^2+\frac{19}{6}k-2-D} \left(1+\frac{1}{X^{\theta}} \right)^{2(s-m_0)} \\ & \cdot \Biggl[2^{-\frac{1}{2}k^2-\frac{11}{6}k+1} + 2^{-\frac{1}{2}k^2+\frac{8}{3}k+\frac{11}{3}}k^{-\frac{1}{2}k^2+\frac{8}{3}k+\frac{2}{3}} \\
& + k^{-1}\sum_{i=2}^{n} \Biggl ( \left(\frac{4}{k^{\frac{1}{3}}}\right)^{\sum_{j=0}^{i-3}\left( \frac{k}{s-k} \right)^j}  \left( 2^{\frac{1}{2}k^2+\frac{31}{6}k+7}e^{\frac{3}{4}k^2-\frac{1}{2}k}k^{-\frac{1}{2}k^2+\frac{19}{3}k}(\lambda+1)^k  \right)^{\sum_{j=0}^{i-1}\left(\frac{k}{s-k}\right)^j} \Biggr) \Biggr]
\label{eq:cdaggen}
\end{aligned}\end{equation}
for $n=1,\dots,D$, where we have made use of the inequality
$$\begin{aligned}
2^{-\frac{1}{2}k^2+\frac{10}{3}k+1}   k^{-\frac{1}{2}k^2+\frac{7}{3}k} 
\le 2^{\frac{1}{2}k^2+\frac{31}{6}k+7}e^{\frac{3}{4}k^2-\frac{1}{2}k}k^{-\frac{1}{2}k^2+\frac{19}{3}k}(\lambda+1)^k
\end{aligned}$$
and $k^{-D}\le k^{-1}$. Let us now tame the inequality \eqref{eq:cdaggen}. We have for any $n \in \BZ$
$$
\left( \frac{4}{k^{\frac{1}{3}}} \right)^{\sum_{i=0}^n \left( \frac{k}{s-k} \right)^i} \le \max \left\{ 1, \sup_{k \ge 3} \left( \frac{4}{k^{\frac{1}{3}}} \right)^{\frac{k}{k-1}}  \right\} \le \frac{2^3}{\sqrt{3}} \le 2k
$$
as the latter is a decreasing function in $k$. Let $\CM$ denote the maximum of the quantities
\begin{equation}\begin{aligned}
2^{-\frac{1}{2}k^2-\frac{11}{6}k}&, \\
\cdot 2^{-\frac{1}{2}k^2+\frac{8}{3}k+\frac{8}{3}}k^{-\frac{1}{2}k^2+\frac{8}{3}k+\frac{2}{3}}&, \\
\left( 2^{\frac{1}{2}k^2+\frac{31}{6}k+7}e^{\frac{3}{4}k^2-\frac{1}{2}k}k^{-\frac{1}{2}k^2+\frac{19}{3}k}(\lambda+1)^k \right)^{\gamma}&, \quad \gamma\in \left\{1, \frac{s-k}{s-2k}\right\},
\label{eq:mdef}
\end{aligned}\end{equation}
then we have
\begin{equation*} \begin{aligned}
\CC_D^{\dagger} &\le  2^{\frac{3}{2}k^2+\frac{11}{2}k}k^{\frac{1}{2}k^2+\frac{19}{6}k-2-D} \left(1+\frac{1}{X^{\theta}} \right)^{2(s-m_0)}  \cdot (D+1) \CM \\
& \le 2^{\frac{3}{2}k^2+\frac{11}{2}k}k^{\frac{1}{2}k^2+\frac{19}{6}k-2} \left(1+\frac{1}{X^{\theta}} \right)^{2(s-m_0)}  \CM.
\end{aligned}\end{equation*}
Returning to \eqref{eq:cboundlast}, we also have
\begin{equation*}
\CC_{D+1} \le 2^{\frac{3}{2}k^2+\frac{11}{2}k}k^{\frac{1}{2}k^2+\frac{19}{6}k-2} \left(1+\frac{1}{X^{\theta}} \right)^{2(s-m_0)}  \CM.
\end{equation*}
We immediately see that the middle expression in \eqref{eq:mdef} is dominated by the latter one. We also make use of the inequality $\lambda+1\le k^2$ and hence $\CM$ is at most $\CM_0$, where we recall \eqref{eq:M0}:
\begin{equation*}\begin{aligned}
\CM_0 =  \max_{\gamma \in \{1,\frac{s-k}{s-2k}\}} \Biggl\{  \left( 2^{\frac{1}{2}k^2+\frac{31}{6}k+7}e^{\frac{3}{4}k^2-\frac{1}{2}k}k^{-\frac{1}{2}k^2+\frac{25}{3}k} \right)^{\gamma}, 2^{-\frac{1}{2}k^2-\frac{11}{6}k} \Biggr \}.
\end{aligned}\end{equation*}
We conclude the following proposition.
\begin{prop}
Let $s,k \in \BN$ with $k \ge 3$ and $2 \log(k) \ge \lambda=\frac{s-k}{k^2}\ge 1$. Further let $D \ge 1$ be an integer and set $\theta=k^{-(D+1)}$. Assume that
$$
J_{s,k}(X) \le C \log_2(2X)^{\delta} X^{2s-\frac{1}{2}k(k+1)+\eta} \quad \forall X \ge 1,
$$
for some $0 \le \delta$ and $0 < \eta \le \frac{1}{2}k(k+1)$. Then we have
$$
J_{s,k}(X) \le C' \log_2(2X)^{\delta+\frac{2\lambda k-1}{\lambda k-1}} X^{2s-\frac{1}{2}k(k+1)+\eta} \left(X^{\Delta \theta}+ X^{-\eta \theta \frac{s-2k}{s-k}} \right) \quad \forall X^{\theta} \ge 2,
$$
where
$$
\Delta=\frac{k^2(k^2-1)}{2s}\lambda^{-D}-\eta\frac{s-2k}{s-k}\sum_{i=0}^D\lambda^{-i},
$$
$$
C'=C \cdot 2^{\frac{3}{2}k^2+\frac{11}{2}k}k^{\frac{1}{2}k^2+\frac{19}{6}k-2} \left(1+\frac{1}{X^{\theta}} \right)^{2(s-m_0)}  \CM_0,
$$
with $m_0$ as in \eqref{eq:m} and where $\CM_0$ is defined as in \eqref{eq:M0}.
\label{prop:oneit}
\end{prop}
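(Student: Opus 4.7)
The proof will be an assembly of the ingredients that have already been prepared in this section. First I would invoke Proposition \ref{prop:initial} with the parameter choices $G_0=\lfloor \theta\log_2 X\rfloor$ and $m_0$ as in \eqref{eq:m}; the assumption $X^\theta\ge 2$ gives $G_0\ge 1$, while $\theta=k^{-(D+1)}\le k^{-2}$ and $m_0\le 8k^2$ are immediate from the definitions, so all the hypotheses hold. This produces the bound $\llbracket J_{s,k}(X)\rrbracket\le \log_2(2X)\cdot \Psi_0$ displayed just before the definition \eqref{eq:psidef}. I would then apply Proposition \ref{prop:ineqpsi} iteratively through $n=0,1,\dots,D-1$ and finish by invoking Proposition \ref{prop:lastpsi}, arriving at
\[
\llbracket J_{s,k}(X)\rrbracket \le \log_2(2X)^{\,1+\sum_{n=0}^{D-1}(k/(s-k))^n}\,\Psi_D,
\]
with $\Psi_D$ bounded by the two-term sum of Proposition \ref{prop:lastpsi}.

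Converting this to the closed form in the statement is now an algebraic rewriting. Using $k/(s-k)=(\lambda k)^{-1}$, the geometric series is at most $\lambda k/(\lambda k-1)$, so adding the outer $1$ gives total log-exponent $(2\lambda k-1)/(\lambda k-1)$, exactly as claimed. For the dominant power of $X$, the identity
\[
b_i\cdot\bigl(k/(s-k)\bigr)^i=\bigl(k^2/(s-k)\bigr)^i=\lambda^{-i}
\]
(and similarly $b_D(k/(s-k))^D=\lambda^{-D}$) turns the exponent arising from Proposition \ref{prop:lastpsi} into precisely $\Delta\theta$. The second term, $\CC_D^\dagger X^{-\eta\theta(s-2k)/(s-k)}$, is already in the required form.

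For the constants, I would bundle $\CC_{D+1}$ and $\CC_D^\dagger$ into a common upper bound. These have been estimated in the two displays immediately preceding the statement and are each at most
\[
2^{\frac{3}{2}k^2+\frac{11}{2}k}k^{\frac{1}{2}k^2+\frac{19}{6}k-2}\bigl(1+X^{-\theta}\bigr)^{2(s-m_0)}\CM_0,
\]
so replacing both by this common bound gives the factor $C'/C$. Denormalising via \eqref{eq:normalisation}, i.e.\ multiplying both sides by $C\log_2(2X)^\delta X^{2s-\frac{1}{2}k(k+1)+\eta}$, then yields the stated inequality.

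The only genuine verification, as opposed to bookkeeping, is that the parameter choices $(G_n,g_n,m_n)$ with $\theta=k^{-(D+1)}$ simultaneously meet the hypotheses of Propositions \ref{prop:initial}, \ref{prop:conditioning}, \ref{prop:extraction} and \ref{prop:endI} at every step of the iteration; this is already dispatched by the inequalities collected around \eqref{eq:Gineq1}, which depend only on $\lambda\ge 1$ and $X^\theta\ge 2$. I therefore expect no real obstacle: the substance of the argument lives entirely in the preceding propositions, and the statement now follows from their telescoping assembly together with the constant estimates worked out just above.
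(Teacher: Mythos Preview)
Your proposal is correct and follows essentially the same route as the paper: the statement is simply the conclusion of the telescoping chain $\llbracket J_{s,k}(X)\rrbracket \le \log_2(2X)\Psi_0$, Proposition~\ref{prop:ineqpsi} applied for $n=0,\dots,D-1$, and Proposition~\ref{prop:lastpsi}, followed by the identification $b_i(k/(s-k))^i=\lambda^{-i}$ and the uniform constant bound for $\CC_{D+1},\CC_D^{\dagger}$ worked out immediately before the statement. There is nothing to add; the substance indeed lives entirely in the earlier propositions and constant estimates, exactly as you describe.
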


Now we want to bring Proposition \ref{prop:oneit} into a shape which one can iterate easily. For this matter we want to optimise our gain in the exponent. The optimal choice of $D$ is in general not an easy problem and leads to complications in further calculations. Nevertheless there is a reasonable exponent gain one can achieve, namely $-\eta \theta \frac{s-2k}{s-k}$. This is reasonable because if $\lambda$ is close to $1$ all terms are of almost equal size and if $\lambda$ is large the positive term gets very small and thus can be handled by the tail sum.

Let us first assume $\lambda >1$, then
$$
\Delta = -\eta\frac{s-2k}{s-k}+\frac{k^2(k^2-1)}{2s}\lambda^{-D}-\eta \frac{s-2k}{s-k} \frac{\lambda^{-1}(1-\lambda^{-D})}{1-\lambda^{-1}},
$$
and we would like $\Delta \le -\eta \frac{s-2k}{s-k}$. Thus we need
$$
\left(\frac{k^2(k^2-1)}{2s}+\eta\frac{s-2k}{s-k}\frac{\lambda^{-1}}{1-\lambda^{-1}}\right)\lambda^{-D} \le \eta \frac{s-2k}{s-k}\frac{\lambda^{-1}}{1-\lambda^{-1}}
$$
or
$$
\frac{k^2}{2\eta} \frac{k^2-1}{s} \frac{s-k}{s-2k}\frac{\lambda-1}{\lambda} +1\le \lambda^D.
$$
Now we have
$$
\frac{k^2-1}{s} \frac{s-k}{s-2k} \frac{\lambda-1}{\lambda}=\frac{k^2-1}{\lambda k+1} \frac{\lambda}{\lambda k -1 } \frac{\lambda-1}{\lambda} = \frac{(\lambda-1)(k^2-1)}{\lambda^2 k^2-1} \le \frac{\lambda-1}{\lambda^2},
$$
hence it suffices to have
\begin{equation}
D \ge \frac{\log\left(\frac{k^2}{2\eta} \frac{\lambda-1}{\lambda^2}+1 \right)}{\log(\lambda)}.
\label{eq:Dmin}
\end{equation}
In the case $\lambda =1$ one needs
$$
\frac{k^2(k^2-1)}{2s}-\eta \frac{s-2k}{s-k}D \le 0 \Leftrightarrow D \ge \frac{k^2}{2\eta},
$$
which is recovered from \eqref{eq:Dmin} in the limit as $\lambda \to 1^{+}$.\\

We are now able to balance the two inequalities in Proposition \ref{prop:oneit}. We make the choice $X_0^{\theta}=4k$ and use the trivial inequality for $1 \le X \le X_0$ and the new inequality for $X\ge X_0$. Thus we have for $1 \le X \le X_0$:
\begin{equation}\begin{aligned}
J_{s,k}(X) &\le C \log_2(2X)^{\delta} X^{2s-\frac{1}{2}k(k+1)+\eta} \\
& \le C \log_2(2X)^{\delta} X^{2s-\frac{1}{2}k(k+1)+\eta} \left( X_{0}^{\eta \theta} \cdot  X^{-\eta \theta \frac{s-2k}{s-k}} \right) \\
& \le C \cdot 2^{k^2+k}k^{\frac{1}{2}k^2+\frac{1}{2}k} \cdot  \log_2(2X)^{\delta+ \frac{2 \lambda k-1}{\lambda k -1}} X^{2s-\frac{1}{2}k(k+1)+\eta} \cdot  X^{-\eta \theta \frac{s-2k}{s-k}},
\end{aligned}\label{eq:Xsmall}
\end{equation}
where we have made use of $\eta \le \frac{1}{2}k(k+1)$. For $X \ge X_0$ we further need to estimate
$$
\left(1+\frac{1}{X_0^{\theta}} \right)^{2(s-m_0)} \le \left(1+\frac{1}{4k} \right)^{2\lambda k^2} \le e^{\frac{1}{2}\lambda k} \le k^k.
$$
Thus in this case we get
\begin{equation}\begin{aligned}
J_{s,k}(X) \le & C \cdot 2^{\frac{3}{2}k^2+\frac{11}{2}k+1}k^{\frac{1}{2}k^2+\frac{25}{6}k-2}\cdot \CM_0 \\& \cdot \log_2(2X)^{\delta+ \frac{2 \lambda k -1 }{\lambda k -1}} X^{2s-\frac{1}{2}k(k+1)+\eta} \cdot X^{-\eta \theta \frac{s-2k}{s-k}}.
\end{aligned}\label{eq:Xlarge}
\end{equation}
Comparing the two constants in \eqref{eq:Xsmall} and \eqref{eq:Xlarge} we find that the latter is larger and thus we conclude the proof of Theorem \ref{thm:final}.

\section{Final Upper Bounds}

In this section we consider a more general system of equations
\begin{equation*}
\sum_{i=1}^l x_i^j-\sum_{i=l+1}^s x_i^j = N_j, \quad (j=1,\dots,k),
\end{equation*}
with integers $1\le x_i \le X$. Let $I_{s,k,l}(\vect{N};X)$ denote its counting function. We shall use a Hardy--Littlewood dissection into major and minor arcs to establish an asymptotic formula
$$
I_{s,k,l}(\vect{N};X) \sim \FS_{s,k,l}(\vect{N}) \CJ_{s,k,l}(\vect{N}) X^{s-\frac{1}{2}k(k+1)}
$$
with an effective error term, where $\FS_{s,k,l}(\vect{N})$ and $\CJ_{s,k,l}(\vect{N})$ are the singular series and the singular integral, which are given by

$$\FS_{s,k}(\vect{N}) = \sum_{q_1,\dots,q_n=1}^{\infty} \sum_{\substack{\vect{a} \mod \vect{q}\\ (a_i,q_i)=1, i=1,\dots,k}} (q_1\cdot \dots \cdot q_k)^{-s} S_{\vect{q}}(\vect{a})^l \overline{S_{\vect{q}}(\vect{a})}^{s-l} e\left( \sum_{j=1}^k \frac{a_jN_j}{q_j} \right)$$
and
$$\CJ_{s,k}(\vect{N}) = \int_{\BR^k} I(\vect{\beta})^l\overline{I(\vect{\beta})}^{s-l} e\left( \sum_{j=1}^k \frac{\beta_jN_j}{X^j} \right) d\vect{\beta},$$
where
$$S_{\vect{q}}(\vect{a}) = \sum_{n=1}^q e\left( \sum_{j=1}^k \frac{a_jn^j}{q_j} \right) \text{ and } I(\vect{\beta}) = \int_{0}^1 e\left( \sum_{j=1}^k \beta_jx^j \right) dx.$$
We achieve this by using a good enough estimate for $J_{s,k}(X)$ in the minor arcs, which we will get by iterating Theorem \ref{thm:final}. To make our life simpler we restrict to the case $\lambda>1$ and think of $\lambda$ as fixed as in this case we see that $D$ only grows logarithmically in $\frac{k^2}{2\eta}$, which in return makes the constant smaller.

We will iterate Theorem \ref{thm:final} as follows. We fix $D$ and iterate as many times as needed till we get an exponent $\eta$ that is too small to apply the theorem with the choice of $D$ we fixed. For this purpose we need to reverse engineer the inequality \eqref{eq:Dmin}. We have
\begin{equation}\begin{aligned}
\frac{k^2}{2\eta} \cdot \frac{\lambda-1}{\lambda^2}+1 &= \frac{k^2}{2\eta} \cdot \frac{\lambda-1}{\lambda^2} \left(1+ \frac{2\eta}{k^2} \cdot \frac{\lambda^2}{\lambda-1} \right) \\
&\le \frac{k^2}{2\eta} \cdot \frac{\lambda-1}{\lambda^2} \left(1+ \frac{k(k+1)}{k^2} \cdot \frac{\lambda^2}{\lambda-1} \right) \\
&\le \frac{k^2}{2\eta} \cdot \frac{\lambda-1}{\lambda^2} \left(\frac{k(k+1)}{k^2} \cdot \frac{\lambda^2+\lambda-1}{\lambda-1} \right) \\
&= \frac{k(k+1)}{2\eta} \cdot \frac{\lambda^2+\lambda-1}{\lambda^2} \\
&\le \frac{5}{4} \cdot \frac{k(k+1)}{2\eta}
\end{aligned}
\label{eq:senf1}
\end{equation}
as $\lambda \le \frac{1}{4}\lambda^2+1$ by AM-GM. Thus we are able to apply Theorem \ref{thm:final} as long as
$$
\eta \ge \frac{5}{4} \cdot \frac{k(k+1)}{2\lambda^D},
$$
which immediately leads to the following proposition.
\begin{prop} Let $s,k,D \in \BN$ with $k \ge 3$ and $2 \log(k) \ge \lambda = \frac{s-k}{k^2}>1$. Assume that
$$
J_{s,k}(X) \le C \log_2(2X)^{\delta} X^{2s-\frac{1}{2}k(k+1)+\eta} \quad \forall X \ge 1,
$$
for some $0 \le \delta$ and $0 < \eta \le \max \left\{ \frac{1}{2}k(k+1),\frac{5}{4} \cdot \frac{k(k+1)}{2 \lambda^{D-1}} \right\}$. Then we have
$$
\begin{aligned}
J_{s,k}(X) \le & C \left[ 2^{\frac{3}{2}k^2+\frac{11}{2}k+1}k^{\frac{1}{2}k^2+\frac{25}{6}k-2} \CM_0 \! \cdot \! \log_2(2X)^{\frac{2\lambda k -1 }{\lambda k -1}} \right]^{\log(\lambda) \frac{\lambda k}{\lambda k-1}k^{D+1}+1} \\
& \cdot \log_2(2X)^{\delta} X^{2s-\frac{1}{2}k(k+1)+\eta'}, \quad \forall X \ge 1,
\end{aligned}
$$
for some $\eta'<\frac{5}{4}\cdot \frac{k(k+1)}{2 \lambda^{D}}$ and where $\CM_0$ as defined in \eqref{eq:M0}.
\label{prop:OneDit}
\end{prop}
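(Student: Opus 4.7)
The plan is to apply Theorem \ref{thm:final} iteratively with the $D$ given in the statement held fixed, stopping precisely when the resulting exponent first drops below $\frac{5}{4}\cdot\frac{k(k+1)}{2\lambda^D}$. Writing $\theta = k^{-(D+1)}$ and using $\frac{s-2k}{s-k} = 1 - \frac{1}{\lambda k}$, one application of Theorem \ref{thm:final} replaces the current exponent $\eta_n$ by
$$
\eta_{n+1} = \eta_n\left(1 - \theta\cdot\frac{s-2k}{s-k}\right) = \eta_n\left(1 - \frac{\lambda k - 1}{\lambda k^{D+2}}\right),
$$
multiplies the leading constant by
$$
A := 2^{\frac{3}{2}k^2+\frac{11}{2}k+1}k^{\frac{1}{2}k^2+\frac{25}{6}k-2}\mathcal{M}_0,
$$
and increases the logarithmic exponent by $\frac{2\lambda k - 1}{\lambda k - 1}$. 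The chain of inequalities \eqref{eq:senf1} (derived just before the proposition) shows that Theorem \ref{thm:final} is applicable with this fixed $D$ exactly as long as $\eta_n \ge \frac{5}{4}\cdot\frac{k(k+1)}{2\lambda^D}$, which therefore supplies the natural stopping criterion.

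Next I would count the iterations. In the worst case the hypothesis permits $\eta_0 \le \frac{5}{4}\cdot\frac{k(k+1)}{2\lambda^{D-1}}$, so iterating must produce an overall reduction by a factor of at most $1/\lambda$ in order to breach the threshold; in the alternative range $\eta_0 \le \frac{1}{2}k(k+1)$, a direct comparison with the threshold shows the required reduction is strictly smaller. Using the elementary inequality $-\log(1-x) \ge x$ for $x \in (0,1)$, the condition
$$
\left(1 - \frac{\lambda k - 1}{\lambda k^{D+2}}\right)^N \le \lambda^{-1}
$$
is secured as soon as $N \ge \log(\lambda)\cdot\frac{\lambda k}{\lambda k - 1}k^{D+1}$. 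Taking $N := \lceil \log(\lambda)\frac{\lambda k}{\lambda k - 1}k^{D+1} \rceil$ suffices and automatically satisfies $N \le \log(\lambda)\frac{\lambda k}{\lambda k - 1}k^{D+1} + 1$; in particular, the final exponent $\eta' := \eta_N$ lies strictly below $\frac{5}{4}\cdot\frac{k(k+1)}{2\lambda^D}$, as required.

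Finally I would assemble the pieces. After the $N$ iterations the multiplicative constant becomes $C\cdot A^N$, the logarithmic exponent becomes $\delta + N\cdot\frac{2\lambda k - 1}{\lambda k - 1}$, and the $X$-exponent is $2s - \frac{1}{2}k(k+1) + \eta'$. Since $A \ge 1$ and $\log_2(2X) \ge 1$, the quantity
$$
A^N \cdot \log_2(2X)^{N\cdot\frac{2\lambda k - 1}{\lambda k - 1}} = \left[A\cdot\log_2(2X)^{\frac{2\lambda k - 1}{\lambda k - 1}}\right]^N
$$
is bounded by the same expression raised to the explicit upper bound $\log(\lambda)\frac{\lambda k}{\lambda k - 1}k^{D+1} + 1$ for $N$, which produces exactly the form in the statement. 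The one verification requiring attention is that the hypothesis $\eta_n \le \frac{1}{2}k(k+1)$ of Theorem \ref{thm:final} remains valid throughout; this is automatic because the sequence $(\eta_n)$ decreases monotonically from an initial value already below this threshold (or which can be reduced to it in one preliminary step), so no issue arises at any intermediate stage of the iteration.
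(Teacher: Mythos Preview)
Your proof is correct and follows essentially the same route as the paper's: iterate Theorem~\ref{thm:final} with $D$ fixed until $\eta$ drops below $\tfrac{5}{4}\cdot\tfrac{k(k+1)}{2\lambda^{D}}$, and bound the number of iterations by $\big\lceil\log(\lambda)\,\tfrac{\lambda k}{\lambda k-1}\,k^{D+1}\big\rceil\le\log(\lambda)\,\tfrac{\lambda k}{\lambda k-1}\,k^{D+1}+1$. The paper's argument is slightly terser (it does not spell out the assembly of the constant and the logarithmic exponent, nor the monotonicity check for $\eta_n\le\tfrac12 k(k+1)$), but the substance is identical.
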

\begin{proof} If $\eta < \frac{5}{4}\cdot \frac{k(k+1)}{2 \lambda^{D}}$, then the statement is trivial. Otherwise we are able to apply Theorem \ref{thm:final} and we receive an inequality with
$$
\eta'=\eta\left(1-\frac{1}{k^{D+1}} \frac{\lambda k -1}{\lambda k} \right).
$$
If $\eta'< \frac{5}{4}\cdot \frac{k(k+1)}{2 \lambda^{D}}$ then we are done otherwise we repeat the process. After at most 
$$
\left\lceil k^{D+1}\frac{\lambda k}{\lambda k-1 } \log(\lambda) \right\rceil  \le k^{D+1}\frac{\lambda k}{\lambda k-1 } \log(\lambda)+1
$$ iterations we are guaranteed to have $\eta'< \frac{5}{4}\cdot \frac{k(k+1)}{2 \lambda^{D}}$ and hence conclude the proof of the proposition.\end{proof}
We get the following corollary immediately.
\begin{cor} Let $s,k,D \in \BN$ with $k \ge 3$ and $2 \log(k) \ge\lambda=\frac{s-k}{k^2}>1$. Then we have
$$
\begin{aligned}
J_{s,k}(X) \le & \left[ 2^{\frac{3}{2}k^2+\frac{11}{2}k+1}k^{\frac{1}{2}k^2+\frac{25}{6}k-2} \CM_0 \! \cdot \! \log_2(2X)^{\frac{2 \lambda k -1 }{\lambda k -1}} \right]^{\!\log(\lambda) \frac{\lambda k}{\lambda k-1}k^{2}\frac{k^D\!-1}{k-1}+D} \\
& \cdot X^{2s-\frac{1}{2}k(k+1)+\frac{5}{4}\cdot\frac{k(k+1)}{2 \lambda^{D}}}, \quad \forall X \ge 1,
\end{aligned}
$$
where $\CM_0$ as defined in \eqref{eq:M0}.
\label{cor:Jupperbound}
\end{cor}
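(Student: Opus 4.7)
The plan is to iterate Proposition \ref{prop:OneDit} exactly $D$ times, starting from the trivial upper bound. The base of the iteration is the trivial estimate $J_{s,k}(X) \le X^{2s}$, which we rewrite in the form
$$J_{s,k}(X) \le 1 \cdot \log_2(2X)^{0} \cdot X^{2s-\frac{1}{2}k(k+1)+\eta_0}, \qquad \eta_0 = \tfrac{1}{2}k(k+1).$$
Since $\eta_0 = \frac{1}{2}k(k+1)$, the hypothesis of Proposition \ref{prop:OneDit} is satisfied for $D'=1$, and its application produces a bound of the same form with new exponent $\eta_1 < \frac{5}{4}\cdot\frac{k(k+1)}{2\lambda}$, with the same $\delta=0$, and with a new leading constant equal to the bracketed expression raised to the power $\log(\lambda)\frac{\lambda k}{\lambda k-1}k^{2}+1$.

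Inductively, assume that after $j$ applications of Proposition \ref{prop:OneDit} with parameters $D'=1,2,\dots,j$ we have established
$$J_{s,k}(X) \le C_j \log_2(2X)^{0} X^{2s-\frac{1}{2}k(k+1)+\eta_j}, \qquad \eta_j < \tfrac{5}{4}\cdot\tfrac{k(k+1)}{2\lambda^{j}},$$
with $C_j$ being the bracketed quantity raised to the cumulative power $\sum_{i=1}^{j}\!\big[\log(\lambda)\tfrac{\lambda k}{\lambda k-1}k^{i+1}+1\big]$. The hypothesis of Proposition \ref{prop:OneDit} for $D'=j+1$ requires that $\eta_j \le \max\!\big\{\tfrac{1}{2}k(k+1),\tfrac{5}{4}\cdot\tfrac{k(k+1)}{2\lambda^{j}}\big\}$, which is immediate from the inductive bound on $\eta_j$. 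Applying the proposition therefore yields the corresponding bound with $j$ replaced by $j+1$, completing the induction step.

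After $D$ such applications, summing the cumulative exponent gives
$$\sum_{i=1}^{D}\!\Big[\log(\lambda)\tfrac{\lambda k}{\lambda k-1}k^{i+1}+1\Big] = \log(\lambda)\tfrac{\lambda k}{\lambda k-1}k^{2}\,\tfrac{k^{D}-1}{k-1}+D,$$
which matches exactly the exponent appearing in the statement. The residual exponent of $X$ is $\eta_D < \frac{5}{4}\cdot\frac{k(k+1)}{2\lambda^{D}}$, and since $\log_2(2X)^{0}=1$, the resulting bound is exactly the one claimed in the corollary.

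The only real obstacle is the verification that the condition $\eta_j \le \max\{\tfrac{1}{2}k(k+1),\tfrac{5}{4}\cdot\tfrac{k(k+1)}{2\lambda^{D'-1}}\}$ is preserved at every step; this is resolved by the reverse-engineering calculation \eqref{eq:senf1} that precedes Proposition \ref{prop:OneDit}, which was already built into that proposition to guarantee precisely this. Beyond that, the proof is a bookkeeping exercise: each application of Proposition \ref{prop:OneDit} leaves the external $\log$-factor $\log_2(2X)^{\delta}$ untouched (here $\delta=0$), so all newly produced logarithmic powers get absorbed into the bracket, making the telescoping of the constants across the $D$ iterations clean.
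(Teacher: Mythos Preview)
Your argument is correct and is exactly the iteration the paper has in mind when it says the corollary follows ``immediately'' from Proposition~\ref{prop:OneDit}: start from the trivial triple $(C,\delta,\eta)=(1,0,\tfrac12 k(k+1))$ and apply the proposition successively with $D'=1,2,\dots,D$; the telescoped exponent on the bracket is precisely $\sum_{i=1}^{D}\big[\log(\lambda)\tfrac{\lambda k}{\lambda k-1}k^{i+1}+1\big]=\log(\lambda)\tfrac{\lambda k}{\lambda k-1}k^{2}\tfrac{k^{D}-1}{k-1}+D$.

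One cosmetic point worth cleaning up: in your inductive hypothesis you write $C_j=(\text{bracket})^{P_j}$ with $\delta=0$, but the bracket contains $\log_2(2X)$, so this $C_j$ is not a constant and cannot literally be fed back as the ``$C$'' in the hypothesis of Proposition~\ref{prop:OneDit}. The honest way to phrase the induction is to split the bracket as $B\cdot\log_2(2X)^{\frac{2\lambda k-1}{\lambda k-1}}$ with $B$ constant, and carry $(C,\delta)=(B^{P_j},\tfrac{2\lambda k-1}{\lambda k-1}P_j)$. The output of the proposition then recombines to $(B\cdot\log_2(2X)^{\frac{2\lambda k-1}{\lambda k-1}})^{P_j+p_{j+1}}$, which is exactly what you assert. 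So the arithmetic is right; only the packaging of the induction hypothesis should be adjusted.
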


The next step is to get an asymptotic formula as well as an upper bound of the right order of magnitude. From now on we restrict ourselves to the case $\lambda=2$, i.e. $s=2k^2+k$. For this purpose we follow the argument throughout pages 114 to 132  of \cite{ACK04} and insert Corollary \ref{cor:Jupperbound} in the treatment of the minor arcs.

First we bring the estimate in Corollary \ref{cor:Jupperbound} into a shape without logarithms. For $X \ge 7$ we have $\log_2(2X) \le 2 \log(X)$. Moreover we have $\frac{2 k}{2 k -1} \le \frac{6}{5}$. Furthermore we have the inequality
$$
\log(X)^{\alpha} \le \left(\frac{\alpha}{\beta e} \right)^{\alpha} X^{\beta}, \quad \forall \alpha,\beta>0, X\ge e,
$$
as the function $\alpha \log( \log(X)) -\beta \log(X)$ reaches its maximum at $X=e^{\frac{\alpha}{\beta}}$. Hence we conclude for $X \ge 7$ that
\begin{equation*}\begin{aligned}
& \log_2(2X)^{\frac{4 k -1}{2 k -1} \left(\log(2) \frac{2 k}{2 k-1}k^{2}\frac{k^D-1}{k-1}+D\right)} \\
 \le & \left(2\cdot \frac{\frac{4 k -1}{2 k -1} \left(\log(2) \frac{2 k}{2 k-1}k^{2}\frac{k^D-1}{k-1}+D\right)}{\frac{5e}{4} \cdot \frac{k(k+1)}{2^{D+1}}} \right)^{\frac{4 k -1}{2 k -1} \left(\log(2) \frac{2 k}{2 k-1}k^{2}\frac{k^D-1}{k-1}+D\right)} X^{\frac{5}{4} \cdot \frac{k(k+1)}{2^{D+1}}} \\
 \le &  \left[ \frac{2.6\cdot2^{D}}{k(k+1)} \left(\frac{6}{5} \log(2) k^2 \frac{k^{D}-1}{k-1}+D \right)  \right]^{\frac{11}{5}\left(\frac{6}{5}\log(2) k^{2}\frac{k^D-1}{k-1}+D\right)}  X^{\frac{5}{4} \cdot \frac{k(k+1)}{2^{D+1}}}
\end{aligned}\end{equation*}
holds. Furthermore we have
\begin{equation*}\begin{aligned}
\frac{6}{5} \log(2)k^2\frac{k^D-1}{k-1}+D &\le \frac{6}{5} \log(2)k^2\frac{k^D-1}{k-1} \left(1+\frac{D}{\frac{6}{5}\log(2)k^{D+1}} \right) \\
& \le \frac{6}{5} \log(2)k^2\frac{k^D-1}{k-1} \left(1+\frac{5}{54 \log(2)} \right) \\
& \le k^2\frac{k^D-1}{k-1} \\
& \le \frac{3}{2} k^{D+1}
 \end{aligned}
\end{equation*}
and
$$
\frac{2.6}{k(k+1)}k^2\frac{k^D-1}{k-1} \le 2.6 \cdot \frac{k}{k^2-1} \cdot k^D \le k^D.
$$
Hence we may conclude that
\begin{equation*}\begin{aligned}
\log_2(2X)^{\frac{4 k -1}{2 k -1} \left(\log(2) \frac{2 k}{2 k-1}k^{2}\frac{k^D-1}{k-1}+D\right)} \le &  \left(2^D k^{D}   \right)^{\frac{11}{5}\left(\frac{3}{2} k^{D+1}\right)} X^{\frac{5}{4} \cdot \frac{k(k+1)}{2^{D+1}}}. 
\end{aligned}\end{equation*} 
Hence by increasing the constant appropriately we are able to have that the dependency on $X$ is only $X^{2s-\frac{1}{2}k(k+1)+\frac{5}{4} \cdot \frac{k(k+1)}{2^D}}$. We now make use of this inequality in the treatment of $I_2$ in \cite{ACK04} on page 121 with $k_1=k^2$ and $k_2=2k^2+k$. In order to have a power saving we need
\begin{equation*}
\frac{5}{4} \cdot \frac{k(k+1)}{2^D} < k^2 \cdot \rho = k^2\cdot(8k^2(\log(k)+1.5\log(\log(k))+4.2 ))^{-1},
\end{equation*}
which is equivalent to
\begin{equation*}
10 k(k+1)(\log(k)+1.5\log(\log(k))+4.2 ) < 2^{D}.
\end{equation*}
Since we have $k+1 \le \frac{4}{3}k$ and $1.5 \log(\log(k))+4.2 \le 4 \log(k)$ for $k \ge 3$ it is sufficient to have
$$
\frac{200}{3}k^2\log(k) < 2^D
$$
or
\begin{equation}
D= \left \lceil \frac{2 \log(k)+\log(\log(k))+4.2}{\log(2)} \right \rceil \le \frac{2 \log(k)+\log(\log(k))+4.2}{\log(2)}+1.
\label{eq:D}
\end{equation}
Hence we conclude that
\begin{equation}\begin{aligned}
|I_2| \le & \Biggl[ 2^{\frac{3}{2}k^2+\frac{11}{2}k+1+D}k^{\frac{1}{2}k^2+\frac{25}{6}k-2+D}  \CM_0 \Biggr]^{\frac{33}{10} k^{D+1}} \cdot (2k)^{2k^3+11k^2}  \cdot X^{2s-\frac{1}{2}k(k+1)-\delta}
\label{eq:I2}
\end{aligned}\end{equation}
for some $\delta>0$ and where $D$ is given by \eqref{eq:D} and $\CM_0$ as defined in \eqref{eq:M0}. The rest of the calculation goes through as in \cite{ACK04} except that one has to increase the constant to four times the maximum out of $k^{30k^3}$ and the constant in Equation \eqref{eq:I2}. Hence we conclude the following theorem.

\begin{thm} Let $k \ge 3$, $s \ge 5k^2+2k$. Furthermore let $X \ge s^{10}$. We have the asymptotic formula:
$$
\left|I_{s,k,l}(\vect{N};X) - \FS_{s,k,l}(\vect{N})\CJ_{s,k,l}(\vect{N}) X^{s-\frac{1}{2}k(k+1)}\right|  \le C \cdot X^{s-\frac{1}{2}k(k+1)-\delta},
$$
as well as the estimate
$$
I_{s,k,l}(\vect{N};X) \le C X^{s-\frac{1}{2}k(k+1)},
$$
where $C$ is the maximum of $4k^{30k^3}$ and
$$\begin{aligned}
& \Biggl[ 2^{\frac{3}{2}k^2+\frac{11}{2}k+1+D}k^{\frac{1}{2}k^2+\frac{25}{6}k-2+D}  \CM_0 \Biggr]^{\frac{33}{10} k^{D+1}} \cdot 4(2k)^{2k^3+11k^2},
\end{aligned}$$
where $\CM_0$ as defined in \eqref{eq:M0} and
$$
D= \left \lceil \frac{2 \log(k)+\log(\log(k))+4.2}{\log(2)} \right \rceil.
$$
\label{thm:ultimo}
\end{thm}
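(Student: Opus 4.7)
My plan is to follow the circle-method argument of \cite{ACK04} on pp.~114--132 essentially verbatim, substituting Corollary \ref{cor:Jupperbound} for their input on $J_{s,k}(X)$ in the minor-arc analysis. The first step is the usual Hardy--Littlewood dissection of $[0,1[^k$ into major arcs $\FM$ (narrow boxes centred at rationals $\vect{a}/\vect{q}$ of small height) and their complement, the minor arcs $\Fm$. On $\FM$ one obtains the expected main term $\FS_{s,k,l}(\vect{N})\CJ_{s,k,l}(\vect{N})X^{s-\frac{1}{2}k(k+1)}$ together with an error that is acceptable provided $X \ge s^{10}$; this part is entirely contained in \cite{ACK04} and requires no new input.

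For the minor-arc contribution (the quantity called $I_2$ in \cite{ACK04}) the standard recipe is to peel off $2k_2$ factors with $k_2 = 2k^2+k$, apply H\"older to bound one group by $J_{s,k}(X)$ with $s = k_2$, and exploit a Weyl-type supremum bound on the remaining factors: for $\vect{\alpha}\in\Fm$ one has $\sup_{\vect{\alpha}\in\Fm}|f(X/2,X,\vect{\alpha})| \ll X^{1-\rho}$ with $\rho = (8k^2(\log(k)+1.5\log\log(k)+4.2))^{-1}$. To secure a power saving one therefore needs to beat $\eta_{s,k} < k^2\rho$ in the estimate provided by Corollary \ref{cor:Jupperbound} applied with $\lambda = 2$.

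The main computation is then to choose $D$ so that the exponent $\tfrac{5}{4}\cdot k(k+1)/2^D$ in Corollary \ref{cor:Jupperbound} is strictly less than $k^2\rho$. Using $k+1 \le \tfrac{4}{3}k$ and $1.5\log\log(k)+4.2 \le 4\log(k)$ reduces the inequality to $\tfrac{200}{3}k^2\log(k) < 2^D$, which is verified by $D = \lceil(2\log(k)+\log\log(k)+4.2)/\log(2)\rceil$. Feeding this $D$ back into Corollary \ref{cor:Jupperbound} and converting the accumulated logarithmic factor of the shape $\log_2(2X)^{(4k-1)/(2k-1)(\log(2)\tfrac{2k}{2k-1}k^2\tfrac{k^D-1}{k-1}+D)}$ to a tiny power of $X$ via the elementary inequality $\log(X)^\alpha \le (\alpha/(\beta e))^\alpha X^\beta$ (with $\beta = \tfrac{5}{4}\cdot k(k+1)/2^{D+1}$) yields a bound of the shape $|I_2| \le C' \cdot X^{s-\frac{1}{2}k(k+1)-\delta}$ for some $\delta > 0$, with $C'$ the first displayed expression in the statement of Theorem \ref{thm:ultimo}.

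The remaining work is bookkeeping: the extra factor $(2k)^{2k^3+11k^2}$ absorbs the combinatorial losses from the H\"older split and from the major-arc contribution, while the factor of $4$ and the alternative $k^{30k^3}$ take care of the minor-arc constants from \cite{ACK04}. Both the asymptotic formula and the upper bound then follow at once from the dissection. The main obstacle is the careful tracking of constants through the iterated application of Theorem \ref{thm:final} that produces Corollary \ref{cor:Jupperbound}: this is the source of the $\CM_0$ factor, the unwieldy exponent $\tfrac{33}{10}k^{D+1}$, and of the specific choice of $D$. Everything else amounts to transcription from \cite{ACK04}, with their weaker input on $J_{s,k}$ replaced by ours.
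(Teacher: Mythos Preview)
Your proposal is correct and follows essentially the same approach as the paper: restrict to $\lambda=2$ (so the mean-value input is $J_{2k^2+k,k}$), feed Corollary~\ref{cor:Jupperbound} into the minor-arc treatment of \cite{ACK04} with $k_1=k^2$, $k_2=2k^2+k$, convert the logarithmic factor to a small power of $X$ via $\log(X)^\alpha\le(\alpha/(\beta e))^\alpha X^\beta$, and choose $D$ exactly as you do to force $\tfrac{5}{4}k(k+1)2^{-D}<k^2\rho$. The only cosmetic difference is that the paper carries out the log-to-power conversion and the simplification of the resulting exponent in somewhat more detail before invoking \cite{ACK04}, but the substance is identical.
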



\bibliography{Bibliography}
\end{document}